\numberwithin{equation}{section}
\newtheorem{theorem}{Theorem}[section]
\newtheorem{lemma}[theorem]{Lemma}
\newtheorem{proposition}[theorem]{Proposition}
\newtheorem{corollary}[theorem]{Corollary}
\newtheorem{remark}[theorem]{Remark}
\newcommand{\RR}{\mathbb{R}}
\newcommand{\cN}{\mathcal{N}}
\def\veps{\varepsilon}
\def\eps{\veps}
\newcommand{\barr}{\begin{array}}
\newcommand{\earr}{\end{array}}
\newcommand{\bmat}{\begin{pmatrix}}
\newcommand{\emat}{\end{pmatrix}}
\begin{document}

\title{Prandtl boundary layer expansions of steady Navier-Stokes flows over
a moving plate}
\author{ Yan Guo\footnotemark[1]  \and Toan T. Nguyen\footnotemark[2]  }
\date{\today}
\maketitle

\begin{abstract}
This paper concerns the validity of the Prandtl boundary layer theory in the
inviscid limit for steady incompressible Navier-Stokes flows. The stationary
flows, with small viscosity, are considered on $[0,L]\times \mathbb{R}_{+}$,
assuming a no-slip boundary condition over a moving plate at $y=0$. We
establish the validity of the Prandtl boundary layer expansion and its
error estimates.
\end{abstract}

\tableofcontents

\renewcommand{\thefootnote}{\fnsymbol{footnote}}

\footnotetext[1]{%
Division of Applied Mathematics, Brown University, 182 George street,
Providence, RI 02912, USA. Email: Yan\underline{~}Guo@Brown.edu}

\footnotetext[2]{%
Department of Mathematics, Pennsylvania State University, State College, PA
16802, USA. Email: nguyen@math.psu.edu.}

\renewcommand{\thefootnote}{\arabic{footnote}}

\section{Introduction}

In this paper, we consider the stationary incompressible Navier-Stokes
equations 
\begin{equation}  \label{ns}
\begin{aligned} UU_{X}+VU_{Y}+P_{X} &=\varepsilon U_{XX}+\varepsilon U_{YY}
\\ UV_{X}+VV_{Y}+P_{Y} &=\varepsilon V_{XX}+\varepsilon V_{YY} \\ U_X + V_Y
& =0 \end{aligned}
\end{equation}
posed in a two dimensional domain $\Omega =\{(X,Y):0\leq X\leq L, Y\ge 0\}$,
with a ``no-slip'' boundary condition 
\begin{equation}
U(X,0)=u_{b}>0,\ \ \ V(X,0)=0  \label{nonslip}
\end{equation}
on the boundary at $Y=0$. The given constant $u_b$ can be viewed as the
moving speed of the plate (that is, the boundary edge $Y=0$). The case when $%
u_{b}\equiv 0$ refers to the classical no-slip boundary condition on a
motionless boundary $Y=0$. The boundary conditions at $x=0,L$ will be
prescribed explicitly in the text.

\emph{We are interested in the problem when $\varepsilon \to0$.} The study
of the inviscid limit and asymptotic boundary layer expansions of
Navier-Stokes equations (\ref{ns}) in the presence of no-slip boundary
condition is one of the central problems in the mathematical analysis of
fluid mechanics. A formal limit $\varepsilon \rightarrow 0$ should lead the
Euler flow $[U_{0},V_{0}]$ inside $\Omega $ which satisfies \textit{only}
non-penetration condition at $Y=0:$ 
\begin{equation*}
V_{0}(X,0)=0.
\end{equation*}
Throughout this paper, we assume that the outside Euler flow is a shear flow 
\begin{equation}
\lbrack U_{0},V_{0}]\equiv \lbrack u_{e}^{0}(Y),0],  \label{shear}
\end{equation}%
for some smooth functions $u_e^0(Y)$. We note that there is no pressure $p_e$
for this Euler flow. Generically, there is a mismatch between the tangential
velocities of the Euler flow $u_{e}\equiv U_{0}(X,0)\equiv u_e^0(0)$ and the
prescribed Navier-Stokes flows $U(X,0) = u_{b}$ on the boundary.

Due to the mismatch on the boundary, Prandtl in 1904 proposed a thin fluid
boundary layer of size $\sqrt{\varepsilon }$ to connect different velocities 
$u_{e}$ and $u_{b}.$ We shall work with the scaled boundary layer, or
Prandtl's, variables: 
\begin{equation*}
x = X, \qquad y=\frac{Y}{\sqrt{\varepsilon }}.
\end{equation*}%
In these variables, we express the solution of the NS equation $[U,V]$ via $%
[U^{\varepsilon },V^{\varepsilon }]$ as 
\begin{equation*}
\lbrack U(X,Y), V(X,Y) \rbrack = \lbrack U(x,\sqrt{\varepsilon }y),\sqrt{%
\varepsilon }\left\{ \frac{V(x,\sqrt{\varepsilon }y)}{\sqrt{\varepsilon }}%
\right\} ] = \lbrack U^{\varepsilon }(x,y),\sqrt{\varepsilon }V^{\varepsilon
}(x,y)]
\end{equation*}%
in which we note that the scaled normal velocity $V^{\varepsilon }$ is $%
\frac{1}{\sqrt{\varepsilon }}$ of the original velocity $V$. Similarly, $%
P(X,Y) = P^\veps(x,y)$. In these new variables, the Navier-Stokes equations %
\eqref{ns} now read 
\begin{equation}  \label{scaledns}
\begin{aligned} U^\veps U^\veps_{x}+V^\veps U^\veps_{y}+P^\veps_{x}
&=U^\veps_{yy}+\varepsilon U^\veps_{xx} \\ U^\veps V^\veps_{x}+V^\veps
V^\veps_{y}+\frac{P^\veps_{y}}{\varepsilon } &=V^\veps_{yy}+\varepsilon
V^\veps_{xx} \\ U^\veps_x + V^\veps_y & =0. \end{aligned}
\end{equation}
Throughout the paper, we perform our analysis directly on these scaled
equations together with the same no-slip boundary conditions \eqref{nonslip}%
. Prandtl then hypothesized that the Navier-Stokes flow can be approximately
decomposed into two parts: 
\begin{equation}  \label{scaledexpansion}
\begin{aligned} \lbrack U^{\varepsilon },V^{\varepsilon }] \quad &\approx
\quad \lbrack u_{e}^{0}(\sqrt{\varepsilon
}y),0]+[\bar{u}(x,y),\bar{v}(x,y)], \\ P^\veps \quad &\approx \quad \bar
p(x,y), \end{aligned}
\end{equation}%
in which $u_e^0(\sqrt \varepsilon y)$ denotes the Euler flow as in %
\eqref{shear}. Putting the ansatz into the Navier-Stokes equations %
\eqref{scaledns} and collecting the leading terms in $\varepsilon$, we
obtain the Prandtl layer corrector $[\bar{u},\bar{v}]$ which satisfies 
\begin{equation}  \label{prandtl}
\begin{aligned} \lbrack u_e+\bar{u}]\bar{u}_{x}+v\bar{u}_{y}+\bar{p}_{x}
&=\bar{u}_{yy} \\ \bar{u}_{x}+\bar{v}_{y} &=0 \end{aligned}
\end{equation}
where $\bar{p}_{x}=\bar{p}_{x}(x),$ since by the second equation in %
\eqref{scaledns}, $\bar p_y =0$. Evaluating the first equation at $y =
\infty $, one gets $\bar p_x=0$, which is precisely due to the assumption
that the trace of Euler on the boundary does not depend on $x$ (a.k.a,
Bernoulli's law). Hence, the boundary layer satisfies 
\begin{equation}
\bar{u}(x,0)=-u_{e}+u_{b}<0,\text{ \ \ \ \ }\bar{v}(x,0)=0,\text{ \ \ \ \ \
\ \ \ \ \ }\lim_{y\rightarrow \infty }\bar{u}(x,y)=0.  \label{prandtlbc}
\end{equation}%
The Prandtl layer $[\bar u , \bar v]$ is subject to an ``initial'' condition
at $x=0$: 
\begin{equation}  \label{initial}
\bar u (0,y) = \bar u_0(y).
\end{equation}

%We shall consider $\bar{p}_{x}\equiv 0.$

Regarded as one of the most important achievements of modern fluid
mechanics, Prandtl's the boundary layer expansion (\ref{scaledexpansion})
connects the theory of ideal fluid (Euler flows) with the real fluid
(Navier-Stokes flows) near the boundary, for a large Reynolds number (or
equivalently, $\varepsilon \ll1$)$.$ Such a theory has led to tremendous
applications and advances in science and engineering. In particular, since
the Prandtl layer solution $[\bar{u},\bar{v}]$ satisfies an evolution
equation in $x,$ it is much easier to compute its solutions numerically than
those of the original NS flows $[U^{\varepsilon },V^{\varepsilon }]$ which
satisfy an elliptic boundary-value problem. Many other shear layer phenomena
in fluids, such as wake flows (\cite[page 187]{SG}), plane jet flows (\cite[%
page 190]{SG}), as well as shear layers between two parallel flows, can also
be described by the Prandtl layer theory (\ref{prandtl}) and %
\eqref{prandtlbc}.

In spite of the huge success of Prandtl's boundary layer theory in
applications, it remains an outstanding open problem to rigorously justify
the validity of expansion (\ref{scaledexpansion}) in the inviscid limit. 
\emph{The purpose of this paper is to provide an affirmative answer along
this direction.}

%To precisely state our result, we first introduce a cutoff $\chi (Y) $ of size $%
%\delta $ along $Y=0$ in the original variable: that is, $\chi (0) = 1$ and $\chi(Y) =0$ for $Y\ge \delta$, for a small, but fixed, positive constant $\delta$. 
%Now $[u,v]$ has zero BC$\ $\ both at $y=0$
%and $y=\infty .$ $\,$\ We assume expansion in the rescaled form (note the
%rescaled $\sqrt{\varepsilon }V$!) We always demand the all the functions
%vanishes as $y\rightarrow \infty .$ We note that for the first order Prandtl
%correction $[u_{p}^{1}(\varepsilon ),\sqrt{\varepsilon }v_{p}^{1}(%
%\varepsilon ),p_{p}^{1}(\varepsilon )]$, it is dependent on $\varepsilon .$%
%The original NS equations should have non-slip BC of $[u_{b},0]$ at $y=0$
%where $u_{b}>0$ such that the boundary is moving at a constant speed. From (%
%\ref{approximate}), we now seek expansion in the scaled variable with
%remainder $[u,v]$: 

As it turns out, we will need higher order approximations, as compared to %
\eqref{scaledexpansion}, in order to be able to control the remainders.
Precisely, we search for asymptotic expansions of the scaled Navier-Stokes
solutions $[U^\veps, V^\veps, P^\veps]$ in the following form: 
\begin{equation}  \label{expansion}
\begin{aligned} U^\veps(x,y) &= u^0_e(\sqrt \varepsilon y) + u^0_p (x,y) +
\sqrt \veps u^1_e (x,\sqrt \veps y) + \sqrt \veps u^1_p(x,y) +
\varepsilon^{\gamma+\frac 12} u^\veps(x,y) \\ V^\veps(x,y) &= v^0_p (x,y) +
v^1_e (x,\sqrt \veps y) + \sqrt \veps v^1_p(x,y) + \varepsilon^{\gamma+\frac
12} v^\veps(x,y) \\ P^\veps (x,y)&= \sqrt \veps p_e^1(x,\sqrt \veps y) +
\sqrt \eps p^1_p(x,y) + \varepsilon p_p^2(x,y)+ \varepsilon^{\gamma+\frac
12} p^\veps(x,y) \end{aligned}
\end{equation}%
for some $\gamma>0$, in which $[u_e^j, v_e^j, p_e^j]$ and $[u_p^j, v_p^j,
p_p^j]$, with $j = 0,1$, denote the Euler and Prandtl profiles,
respectively, and $[u^\veps ,v^\veps,p^\veps]$ collects all the remainder
solutions. Here, we note that these profile solutions also depend on $%
\varepsilon$, and the Euler flows are always evaluated at $(x,\sqrt
\varepsilon y)$, whereas the Prandtl profiles are at $(x,y)$.

Formally speaking, plugging the above ansatz into \eqref{scaledns} and
matching the order in $\varepsilon$, we easily get that $[u_p^0, v_p^0, 0]$
solves the nonlinear Prandtl equation \eqref{prandtl}, whereas the next
Euler profile $[u_e^1, v_e^1, p_e^1]$ solves the linearized Euler equations
around $(u_e^0,0)$: 
\begin{equation}  \label{sys-ve1}
\begin{aligned} u^{0}_e u^{1}_{ex} + v^{1}_e u_{ey}^{0} + p_{ex}^{1} &=0, \\
u^{0}_e v_{ex}^{1} + p_{ey}^{1} & =0, \\ u_{ex}^{1} + v_{e y}^{1}
&=0,\end{aligned}
\end{equation}
or equivalently in the vorticity formulation, 
\begin{equation}  \label{ve1-elliptic}
\begin{aligned} - u^{0}_e \Delta v^{1}_e + u_{eyy}^{0} v_e^{1}
&=0,\end{aligned}
\end{equation}
with $[u_e^1, p_e^1]$ being recovered from the last two equations in %
\eqref{sys-ve1}. The next Prandtl layer $[u_p^1, v_p^1, p_p^1]$ solves the
linearized Prandtl equations around $[u_e+ u_p^0, v_p^0+v_e^1]$, with a
source term $E_p$: 
\begin{equation}  \label{eqs-Pr02}
\begin{aligned} (u_e+ u_p^0)u^{1}_{px} + u^{1}_p u_{px}^{0} + (v^{0}_p
+v_e^1)u^{1}_{py} + v^{1}_p u_{px}^{0}+ p^{1}_{px} - u^{1} _{pyy} &= E_p,\\
u^{1}_{px} + v^{1}_{py} & =0, \end{aligned}
\end{equation}
with $p_{p}^{1} = p_{p}^{1}(x)$. Certainly, the remainder solutions $%
[u^\veps ,v^\veps ,p^\veps]$ solve the linearized Navier-Stokes equations
around the approximate solutions, with a source that contains nonlinearity
in $[u^\veps ,v^\veps]$; see Section \ref{sec-proof} for details.

Note however that as we deal with functions in Sobolev spaces, all profile
solutions are required to vanish at $y = \infty$. As it will be clear in the
text, the actual Prandtl layers will introduce nonzero normal velocity at
infinity, and is one of the issues in controlling the remainders, since the
profiles then won't even be integrable. As a result, our Prandtl layers $%
[u_p^1, v_p^1]$ in the expansion \eqref{expansion} are being cut-off for
large $y$ of the actual layers solving \eqref{eqs-Pr02}. In Section \ref%
{sec-profiles}, we shall provide detailed construction of the approximate
solutions and derive sufficient estimates for our analysis.

\subsection{Boundary conditions}

\label{sec-BCs} The zeroth Euler flow $u_e^0$ is given. Due to the no-slip
boundary condition at $y =0$, we require that 
\begin{equation*}
u_e + u_{p}^{0} (x,0) = u_b, \qquad v_{e}^{1}(x,0) + v_{p}^{0}(x,0) =0,
\end{equation*}
from the zeroth order in $\varepsilon$ of the expansion \eqref{expansion},
and 
\begin{eqnarray*}
u_{p}^{1}(x,0) = - u_{e}^{1}(x,0) , \qquad v_{p}^{1}(x,0) = 0,
\end{eqnarray*}%
from the $\sqrt\varepsilon$-order layers. We also assume that 
\begin{equation*}
\lim_{y\to \infty} u_p^0(x,y) = 0, \qquad \lim_{y\to \infty} u_p^1(x,y) = 0,
\qquad \lim_{Y\to \infty} [u_e^j, v_e^j](x,Y) = 0.
\end{equation*}
The normal velocities $v_p^0(x,y), v_p^1(x,y)$ in the boundary layers are
constructed from $u_p^0(x,y), u_p^1(x,y)$, respectively, through the
divergence-free condition. We note that in general $\lim_{y\rightarrow
\infty }v_{p}^{j}(x,y)\neq 0$ and hence, cut-off functions will be
introduced to localize $v_p^j$.

Next, we discuss boundary conditions at $x=0,L$. Since the Prandtl layers
solve parabolic-type equations, we require only \textquotedblleft
initial\textquotedblright\ conditions at $x=0$: 
\begin{equation*}
u_{p}^{0}(0,y)=\bar{u}_{0}(y),\qquad u_{p}^{1}(0,y)=\bar{u}_{1}(y),
\end{equation*}%
whereas we prescribe boundary values for the Euler profiles at both $x=0,L$: 
\begin{equation*}
u_{e}^{1}(0,Y)=u_{b}^{1}(Y),\qquad v_{e}^{1}(0,Y)=V_{b0}(Y),\text{ \ \ \ \ \ 
}v_{e}^{1}(L,Y)=V_{bL}(Y)
\end{equation*}%
with compatibility conditions $V_{b0}(0)=v_{p}^{0}(0,0),$ $%
V_{bL}(0)=v_{p}^{0}(L,0)$ at the corners of the domain $[0,L]\times \mathbb{R%
}_{+}$.

Finally, we impose the following boundary conditions for the remainder
solution $[u^\veps ,v^\veps]$: 
\begin{equation}  \label{bc}
\begin{aligned} \lbrack u^\veps,v^\veps]_{y=0} &=0\text{ (no-slip)},\text{ \
\ \ \ \ \ \ \ \ \ \ }[u^\veps ,v^\veps]_{x=0}=0\text{ \ (Dirichlet)}, \\
p^\veps-2\varepsilon u^\veps_{x} &=0,\text{ \ \ \ \ }u^\veps_{y}+\varepsilon
v^\veps_{x}=0\quad \text{ \ \ at }x=L\text{ (Neumann or stress-free)}.
\end{aligned}
\end{equation}
Certainly, one may wish to consider different boundary conditions for $%
[u^\veps ,v^\veps]$ at $x=L$. However, to avoid a possible formation of
boundary layers with respect to $x$ near the boundary $x = L$, the above
Neumann stress-free condition appears the most convenient candidate to
impose.

\subsection{Main result and discussions}

We are ready to state our main result:

\begin{theorem}
\label{main} Let $u_{e}^{0}(Y)$ be a given smooth Euler flow, and let $%
u_{b}^{1}(Y)$, $V_{b0}(Y),V_{bL}(Y)$, with $Y=\sqrt{\varepsilon }y$, and $%
\bar{u}_{0}(y)$ and $\bar{u}_{1}(y)$ be given smooth data and decay
exponentially fast at infinity in their arguments, and let $u_{b}$ be a
positive constant. We assume that $|V_{bL}(Y)-V_{b0}(Y)|\lesssim L$ for
small $L$, and 
\begin{equation}
\min_{0\leq y\leq \infty }\left\{ u_{e}^{0}(\sqrt{\varepsilon }y)+\bar{u}%
_{0}(y)\right\} >0.  \label{nozero}
\end{equation}%
Then, there exists a positive number $L$ that depends only on the given data
so that the boundary layer expansions \eqref{expansion}, with the profiles
satisfying the boundary conditions in Section \ref{sec-BCs}, hold for $%
\gamma \in (0,\frac{1}{4})$. Precisely, $[U^{\varepsilon},V^{%
\varepsilon},P^{\varepsilon}]$ as defined in \eqref{expansion} is the unique
solution to the Navier-Stokes equations \eqref{scaledns}, so that the
remainder solutions $[u^{\varepsilon},v^{\varepsilon}]$ satisfy 
\begin{equation*}
\Vert \nabla _{\varepsilon }u^{\varepsilon}\Vert _{L^{2}}+\varepsilon ^{%
\frac{1}{2}}\Vert \nabla _{\varepsilon }v^{\varepsilon}\Vert
_{L^{2}}+\varepsilon ^{\frac{\gamma }{2}}\Vert u^{\varepsilon}\Vert
_{L^{\infty }}+\varepsilon ^{\frac{1}{2}+\frac{\gamma }{2}}\Vert
v^{\varepsilon}\Vert _{L^{\infty }}\leq C_{0},
\end{equation*}%
for some constant $C_{0}$ that depends only on the given data. Here, $\nabla
_{\varepsilon}=(\sqrt{\varepsilon }\partial _{x},\partial _{y})$, and $\Vert
\cdot \Vert _{L^{p}}$ denotes the usual $L^{p}$ norm over $[0,L]\times 
\mathbb{R}_{+}$.
\end{theorem}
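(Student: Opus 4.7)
The plan is to reduce Theorem~\ref{main} to two ingredients: construction of the profiles appearing in \eqref{expansion}, and a solvability-and-estimate theorem for the linearized remainder system; then to close the fully nonlinear problem by a fixed-point argument on the short strip $[0,L]\times\RR_+$ whose length $L$ is chosen small.

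First I would build the profiles iteratively in the order $[u_p^0,v_p^0]\to[u_e^1,v_e^1,p_e^1]\to[u_p^1,v_p^1,p_p^1]$. The layer $[u_p^0,v_p^0]$ solves the nonlinear Prandtl system \eqref{prandtl}--\eqref{prandtlbc} with initial data $\bar u_0$; local-in-$x$ existence of a smooth, exponentially decaying solution follows from classical Oleinik theory once \eqref{nozero} is used to keep $u_e+u_p^0>0$. The corrector $[u_e^1,v_e^1,p_e^1]$ is then obtained from the elliptic problem \eqref{ve1-elliptic} with the Dirichlet data prescribed in Section~\ref{sec-BCs}, and $[u_p^1,v_p^1,p_p^1]$ from the linear Prandtl system \eqref{eqs-Pr02}. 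Because $v_p^0$ and $v_p^1$ need not decay as $y\to\infty$, these layers must be cut off in $y$, which generates additional terms in the residual. At the end of this step I would have weighted bounds showing that the residual $R^\veps$ obtained by substituting \eqref{expansion} into \eqref{scaledns} is small compared to $\veps^{\gamma+\frac12}$ in the norms used below.

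Substituting \eqref{expansion} into \eqref{scaledns} produces a linearized Navier--Stokes system for $[u^\veps,v^\veps,p^\veps]$ around a background $[U_s,V_s]$ assembled from the profiles, with right-hand side $R^\veps+\veps^{\gamma+\frac12}\cN[u^\veps,v^\veps]$, where $\cN$ is the quadratic self-interaction. \textbf{The main obstacle is obtaining a closed a priori estimate for this linear system.} A naive $L^2$ test against $[u^\veps,v^\veps]$ fails, because the stretching term $v^\veps\partial_y U_s$ is of order one and concentrated in the layer of thickness $\sqrt\veps$. I would instead test the $v$-equation against a multiplier of the form $\partial_x(v^\veps/U_s)$, equivalently working with the scaled vorticity $\omega^\veps/U_s$ where $\omega^\veps=u^\veps_y-\veps v^\veps_x$. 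The strict positivity $U_s\ge c>0$ guaranteed by \eqref{nozero} together with the moving-plate hypothesis $u_b>0$ makes this multiplier admissible, and the structure of \eqref{scaledns} converts the bad stretching contribution into a controllable one. The resulting coercive inequality controls $\|\nabla_\veps u^\veps\|_{L^2}+\sqrt\veps\|\nabla_\veps v^\veps\|_{L^2}$ by $R^\veps$ together with boundary contributions at $x=L$ that the stress-free conditions in \eqref{bc} are precisely designed to annihilate; the assumption $|V_{bL}-V_{b0}|\lesssim L$ is used to absorb the remaining $x$-growth.

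The $L^\infty$ bounds in Theorem~\ref{main} are then obtained from the $H^1_\veps$ estimate by an anisotropic Sobolev embedding adapted to $\nabla_\veps=(\sqrt\veps\partial_x,\partial_y)$: since Sobolev embedding into $L^\infty$ in two dimensions is critical, one pays a factor $\veps^{-\gamma/2}$, which is exactly what forces the restriction $\gamma\in(0,\tfrac14)$. Finally, the full nonlinear system is solved by a contraction mapping on the ball of radius $C_0$ in the norm on the left of the bound stated in Theorem~\ref{main}: the quadratic term $\cN[u^\veps,v^\veps]$ carries the explicit small prefactor $\veps^{\gamma+\frac12}$, so combined with the smallness of $L$ the iteration map is a strict contraction. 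Uniqueness is obtained by applying the same linear estimate to the difference of two solutions.
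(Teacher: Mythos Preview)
Your overall architecture matches the paper's, but there is a genuine gap in the linear estimate, and it propagates to the closure of the fixed-point argument. You write that the weighted-vorticity multiplier yields control of $\|\nabla_\veps u^\veps\|_{L^2}+\sqrt\veps\,\|\nabla_\veps v^\veps\|_{L^2}$. That is exactly what the \emph{naive} energy estimate already gives (testing \eqref{scaledns} against $[u^\veps,\veps v^\veps]$), and it is not enough. The whole point of the vorticity estimate is to obtain $\|\nabla_\veps v^\veps\|_{L^2}$ at \emph{order one}, with no $\sqrt\veps$ in front; see Proposition~\ref{prop-stability}. Concretely, the paper tests the $u$-equation against $\partial_y(v/u_s)$ and the $v$-equation against $-\veps\,\partial_x(v/u_s)$ (i.e.\ the full vorticity equation against $v/u_s$, not just the $v$-equation against $\partial_x(v/u_s)$ as you wrote). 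This produces the term $-\int v_{yy}v+\int (u_{syy}/u_s)v^2=\int u_s^2|\partial_y(v/u_s)|^2$, whose positivity is the key identity \eqref{positivity-intro}, and from it one recovers $\|v_y\|_{L^2}^2$ via \eqref{low-vy}. Without the order-one bound on $\|\nabla_\veps v^\veps\|_{L^2}$, the contraction fails: the remainder $R_2$ in \eqref{remainders} contains the linear-in-$v^\veps$ term $\sqrt\veps\,u_p^1 v^\veps_x$, and when you form $\sqrt\veps\,\|R_2\|_{L^2}$ this contributes $\veps\,\|u_p^1\|_\infty\|v^\veps_x\|_{L^2}$. If only $\sqrt\veps\,\|\nabla_\veps v^\veps\|$ is in your norm, this term is merely $O(1)\|[u^\veps,v^\veps]\|_X$ and cannot be absorbed; with $\|\nabla_\veps v^\veps\|$ in the norm it becomes $O(\sqrt\veps)\|[u^\veps,v^\veps]\|_X$ and the map contracts.

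Two secondary points. First, the $L^\infty$ bound cannot come from an ``anisotropic Sobolev embedding'' of the $H^1_\veps$ norm alone: in two dimensions $H^1\not\hookrightarrow L^\infty$, so some extra regularity is required. The paper obtains it from the Stokes problem in rescaled variables, using the $H^{3/2+}$ estimates near the corners and then interpolating between $H^1$ and $H^{3/2}$; the $\veps^{-\gamma/2}$ loss enters through the interpolation parameter, not through a direct embedding. Second, the restriction $\gamma\in(0,\tfrac14)$ does not come from that embedding. It comes from the residual: Proposition~\ref{prop-approximate} gives $\|R^u_{\mathrm{app}}\|_{L^2}+\sqrt\veps\,\|R^v_{\mathrm{app}}\|_{L^2}\lesssim \veps^{3/4-\kappa}$, and after dividing by the prefactor $\veps^{\gamma+1/2}$ in \eqref{expansion} one needs $\tfrac14-\gamma-\kappa>0$.
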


As a direct corollary of our main theorem above, we obtain the inviscid limit
of the steady Navier-Stokes flows, with prescribed data up to the order of
square root of viscosity.

\begin{corollary}
Under the same assumption as made in Theorem \ref{main}, there are exact
solutions $[U,V]$ to the original Navier-Stokes equations \eqref{ns} on $%
\Omega =[0,L]\times \mathbb{R}_{+}$, with $L$ being as in Theorem \ref{main}%
, so that 
\begin{eqnarray*}
\sup_{(x,y)\in \Omega }\Big |U(x,y)-u_{e}^{0}(y)-u_{p}^{0}(x,\frac{y}{\sqrt{%
\varepsilon }})\Big | &\lesssim &\varepsilon ^{\frac{1}{2}} \\
\sup_{(x,y)\in \Omega }\Big |V(x,y)-\sqrt{\varepsilon }v_{p}^{0}(x,\frac{y}{%
\sqrt{\varepsilon }})-\sqrt{\varepsilon }v_{e}^{1}(x,y)\Big | &\lesssim
&\varepsilon ^{\frac{\gamma }{2}+\frac{1}{2}} 
\end{eqnarray*}%
as $\varepsilon \rightarrow 0$, for given Euler flow $u_{e}^{0}$, the
constructed Euler flows $[u_{e}^{1},v_{e}^{1}]$ and Prandtl layers $%
[u_{p}^{0},\sqrt{\varepsilon }v_{p}^{0}]$. In particular, we have the
convergence $(U,V)\rightarrow (u_{e}^{0},0)$ in the usual $L^{p}$ norm, with
a rate of convergence of order $\varepsilon ^{1/{2p}}$, $1\leq p<\infty $,
in the inviscid limit of $\varepsilon \rightarrow 0$.
\end{corollary}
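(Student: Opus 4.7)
The plan is to derive the corollary as a direct unpacking of Theorem~\ref{main} in the unscaled variables $(X,Y) = (x,\sqrt\veps\, y)$, using the scaling $V(X,Y) = \sqrt\veps\, V^\veps(x,y)$. I would first define $[U,V,P]$ via the expansion \eqref{expansion} supplied by Theorem~\ref{main}; this immediately yields an exact solution to the Navier--Stokes system \eqref{ns} on $\Omega = [0,L]\times\mathbb{R}_+$.

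For the sup-norm estimate on $U$, I would subtract the two leading profiles to isolate
\begin{equation*}
U(X,Y) - u_e^0(Y) - u_p^0(X,Y/\sqrt\veps) = \sqrt\veps\, u_e^1(X,Y) + \sqrt\veps\, u_p^1(X,Y/\sqrt\veps) + \veps^{\gamma+\frac12}\, u^\veps(X,Y/\sqrt\veps).
\end{equation*}
The first two terms are $O(\sqrt\veps)$ from the uniform bounds on the profiles constructed in Section~\ref{sec-profiles}. For the third, Theorem~\ref{main} gives $\|u^\veps\|_{L^\infty}\le C_0\veps^{-\gamma/2}$, so its contribution is $O(\veps^{\frac12+\gamma/2}) = o(\sqrt\veps)$. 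The estimate for $V$ is entirely analogous: the identity
\begin{equation*}
V(X,Y) - \sqrt\veps\, v_p^0(X,Y/\sqrt\veps) - \sqrt\veps\, v_e^1(X,Y) = \veps\, v_p^1(X,Y/\sqrt\veps) + \veps^{\gamma+1}\, v^\veps(X,Y/\sqrt\veps),
\end{equation*}
combined with the bound $\|v^\veps\|_{L^\infty}\le C_0\veps^{-\frac12-\gamma/2}$ from Theorem~\ref{main}, gives a remainder of size $\veps^{\frac12+\gamma/2}$.

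For the $L^p$ convergence, I would use the pointwise bounds together with a change of variables in the dominant Prandtl contribution. Setting $Y = \sqrt\veps\, y$ in the integral gives
\begin{equation*}
\|u_p^0(X,Y/\sqrt\veps)\|_{L^p(\Omega)}^p = \sqrt\veps \int_0^L\!\!\int_0^\infty |u_p^0(X,y)|^p\, dy\, dX,
\end{equation*}
with the right-hand integral finite by the exponential decay of $u_p^0$ in the boundary-layer variable $y$ built into the profile construction. Hence $\|u_p^0(X,Y/\sqrt\veps)\|_{L^p(\Omega)} \lesssim \veps^{1/(2p)}$; combined with the $O(\sqrt\veps)$ pointwise remainder on $\Omega$ (which integrates to $\lesssim \sqrt\veps\le \veps^{1/(2p)}$ for $p\ge 1$), this yields $\|U-u_e^0\|_{L^p(\Omega)}\lesssim \veps^{1/(2p)}$. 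The corresponding calculation for $V$ produces a faster rate of order $\sqrt\veps$, which is subsumed by the $U$-rate.

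There is no genuine analytic obstacle here; all the real work sits in Theorem~\ref{main} itself. The only bookkeeping point is to ensure that the Prandtl profiles appearing in \eqref{expansion} decay in $y$ at infinity, so that their $L^p$ norms in the boundary-layer variable are finite. This is precisely why the construction in Section~\ref{sec-profiles} introduces a cut-off in the $[u_p^1,v_p^1]$ layer, whose true counterpart need not vanish at $y=\infty$.
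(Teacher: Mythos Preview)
Your treatment of the sup-norm estimates is correct and is precisely what the paper has in mind; the corollary is stated there without proof as a ``direct'' consequence of Theorem~\ref{main}. One small imprecision: from \eqref{key-Prandtl1} the bounds on $u_p^1, v_p^1$ carry a harmless $\varepsilon^{-\kappa}$ factor, so the first displayed estimate is really $\varepsilon^{1/2-\kappa}$; this is the same slack the paper itself tolerates in its $\lesssim$ notation.

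There is, however, a gap in the $L^p$ step. The domain $\Omega=[0,L]\times\mathbb{R}_+$ has infinite measure, so the phrase ``the $O(\sqrt\varepsilon)$ pointwise remainder on $\Omega$ (which integrates to $\lesssim\sqrt\varepsilon$)'' does not stand as written: a uniform pointwise bound alone gives no $L^p$ control on an unbounded set. You must treat the remainder term by term. The Euler piece $\sqrt\varepsilon\,u_e^1(X,Y)$ is in $L^p$ uniformly thanks to the weighted decay in \eqref{est-ue1}; the cut-off Prandtl piece $\sqrt\varepsilon\,u_p^1(X,Y/\sqrt\varepsilon)$ is supported in $\{Y\le 1\}$ by \eqref{u1p}. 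The delicate contribution is $\varepsilon^{\gamma+1/2}u^\varepsilon$: Theorem~\ref{main} supplies only $\|u^\varepsilon\|_\infty\lesssim\varepsilon^{-\gamma/2}$ and $\|\nabla_\varepsilon u^\varepsilon\|_{L^2}\lesssim 1$, neither of which places $u^\varepsilon$ in $L^p$ on the half-space by itself. One fix is to invoke the Dirichlet condition at $x=0$ together with $u^\varepsilon_x=-v^\varepsilon_y$ to obtain $\|u^\varepsilon\|_{L^2_{x,y}}\le L\|v^\varepsilon_y\|_{L^2}$, then interpolate with the $L^\infty$ bound for $p\ge 2$; the case $1\le p<2$ requires further work. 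The paper leaves this implicit as well, but you should close it if you want the argument to be complete.
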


\begin{remark}
\emph{We note that the Prandtl layers $[u_p^0, v_p^0]$ are not exactly the
layers $[\bar u,\bar v]$ solving \eqref{prandtl}-\eqref{initial}. Indeed,
whereas $u_p^0 = \bar u$, we have the normal velocity $v_p^0 =
\int_y^{\infty} \bar u_x$, but $\bar v = - \int_0^y \bar u_x$ for the true
Prandtl layer. The introduction of the Euler layer $[u_e^1, v_e^1]$ was
necessary to correct this. }
\end{remark}

Let us give a few comments about the main result. First, the nonzero
condition (\ref{nozero}) and $u_b>0$ are naturally related to the situation
where boundary layers are near a moving plate: such as a wake flow of a
moving body, a moving plane jet flow, and a shear layer between two parallel
flows. It may also be related to the well-known fact in engineering that
injection of moving fluids at the surface prevents the boundary layer
separation.

It is widely known that the mathematical study of Prandtl boundary layers
and the inviscid limit problem is challenging due to its characteristic
nature at the boundary (that is, $v=0$ at $y=0$) and the instability of
generic boundary layers (\cite{GVD, Grenier, GGN, GN}). Here, for steady
flows, we are able to justify the Prandtl boundary layer theory. There are
several issues to overcome. The first is to carefully construct Euler and
Prandtl solutions and derive sufficient estimates. The complication occurs
due to the fact that we have to truncate the actual layers in order to fit
in our functional framework, and the lack of a priori estimates for
linearized Prandtl equations. The construction of the approximate solutions
is done in Section \ref{sec-profiles}.

Next, once the approximate solutions are constructed, we need to derive
stability estimates for the remainder solutions. Due to the limited
regularity obtained for the Prandtl layers $[u_{p}^{1},v_{p}^{1}]$, we shall
study the linearization around the following approximate solutions: 
\begin{equation}
u_{s}(x,y):=u_{e}^{0}(\sqrt{\varepsilon }y)+u_{p}^{0}(x,y)+\sqrt{\varepsilon 
}u_{e}^{1}(x,\sqrt{\varepsilon }y),\qquad
v_{s}(x,y):=v_{p}^{0}(x,y)+v_{e}^{1}(x,\sqrt{\varepsilon }).  \label{us}
\end{equation}%
A straightforward calculation (Section \ref{sec-proof}) yields the equations
for the remainder solutions $[u^{\varepsilon},v^{\varepsilon},p^{%
\varepsilon}]$ in \eqref{expansion}: 
\begin{equation*}
\begin{aligned} u_{s}u^\veps_{x}+u^\veps u_{sx}+v_{s}u^\veps_{y}+v^\veps
u_{sy}+p^\veps _{x}-\Delta _{\varepsilon }u^\veps &=R_{1}(u^\veps,v^\veps)
\\ u_{s}v^\veps_{x}+u^\veps v_{sx}+v_{s}v^\veps_{y}+v^\veps
v_{sy}+\frac{p^\veps_{y}}{\varepsilon }-\Delta _{\varepsilon }v^\veps
&=R_{2}(u^\veps,v^\veps) \\ u^\veps_x + v^\veps_y & =0 , \end{aligned}
\end{equation*}%
with $\Delta _{\varepsilon}=\partial _{y}^{2}+\varepsilon \partial _{x}^{2}$%
. Here, $[u_{s},v_{s}]$ denotes the leading approximate solutions (see %
\eqref{us}), and the remainders $R_{1,2}(u^{\varepsilon},v^{\varepsilon})$
are defined as in \eqref{remainders}. The standard energy estimate (Section %
\ref{sec-EE}) yields precisely a control on $\Vert \nabla
_{\varepsilon}u^{\varepsilon}\Vert _{L^{2}} $ and $\sqrt{\varepsilon }\Vert
\nabla _{\varepsilon}v^{\varepsilon}\Vert _{L^{2}}$, but cannot close the
analysis, due to the large convective term: $\int
u_{sy}u^{\varepsilon}v^{\varepsilon}$, for instance. Indeed, this is a very
common and central difficulty in the stability theory of boundary layers.

The most crucial ingredient (Section \ref{sec-positivity}) in the proof is
to give bound on $\Vert \nabla _{\varepsilon }v^{\varepsilon }\Vert _{L^{2}}$
(in order one, instead of order $\sqrt{\varepsilon }$ from the energy
estimate). The key is to study the vorticity equation, 
\begin{equation*}
-u_{s}\Delta _{\epsilon }v^{\varepsilon }+v_{s}\Delta _{\epsilon
}u^{\varepsilon }-u^{\varepsilon }\Delta _{\epsilon }v_{s}+v^{\varepsilon
}\Delta _{\epsilon }u_{s}=\Delta _{\epsilon }\omega ^{\varepsilon
}+R_{1y}-\varepsilon R_{2x}.
\end{equation*}%
with a new multiplier $\frac{v^{\varepsilon }}{u_{s}}$. Here, the assumption %
\eqref{nozero} and $u_{b}>0$, together with the Maximum Principle for the
Prandtl equations (see estimate \eqref{minw}), assure that $u_{s}$ is
bounded away from zero. Formally, without worrying about boundary terms, the
integral $\int \Delta _{\varepsilon }\omega ^{\varepsilon }v^{\varepsilon }$
vanishes. Hence, the leading term in the vorticity estimate lies in the
convection: $-u_{s}\Delta _{\epsilon }v^{\varepsilon }+v^{\varepsilon
}\Delta _{\epsilon }u_{s}$, or to leading order in the boundary layer
analysis, $-u_{s}\partial _{y}^{2}v^{\varepsilon }+u_{syy}v^{\varepsilon }$.
Our key observation is then the positivity of the second-order operator: 
\begin{equation*}
-\partial _{yy}+\frac{u_{syy}}{u_{s}}.
\end{equation*}%
Indeed, a direct calculation yields 
\begin{eqnarray*}
-\int v_{yy}v &=&\int |v_{y}|^{2}=\int \left\vert \partial _{y}\left\{ \frac{%
v}{u_{s}}u_{s}\right\} \right\vert ^{2} \\
&=&\int u_{s}^{2}\left\vert \partial _{y}\left\{ \frac{v}{u_{s}}\right\}
\right\vert ^{2}+\int u_{sy}^{2}\left\{ \frac{v}{u_{s}}\right\} ^{2}+2\int
\partial _{y}\left\{ \frac{v}{u_{s}}\right\} u_{s}u_{sy}\left\{ \frac{v}{%
u_{s}}\right\} \\
&=&\int u_{s}^{2}\Big |\partial _{y}\left\{ \frac{v}{u_{s}}\right\} \Big |%
^{2}+\int u_{sy}^{2}\left\{ \frac{v}{u_{s}}\right\} ^{2}-\int \left\{ \frac{v%
}{u_{s}}\right\} ^{2}[u_{s}u_{sy}]_{y} \\
&=&\int u_{s}^{2}\Big |\partial _{y}\left\{ \frac{v}{u_{s}}\right\} \Big |%
^{2}-\int \frac{u_{syy}}{u_{s}}v^{2}
\end{eqnarray*}%
which gives the positivity estimate: 
\begin{equation}
\int \left\{ -\partial _{yy}+\frac{u_{syy}}{u_{s}}\right\} vv=\int |\partial
_{y}v|^{2}+\int \frac{u_{syy}}{u_{s}}v^{2}=\int u_{s}^{2}\Big |\partial
_{y}\left\{ \frac{v}{u_{s}}\right\} \Big |^{2}>0.  \label{positivity-intro}
\end{equation}%
The desired bound on $v_{y}^{\varepsilon }$, and in fact, $\nabla
_{\varepsilon }v^{\varepsilon }$ is derived from this positivity estimate
and the weighted estimates from the vorticity equation. Precisely, 
\begin{equation}
\begin{aligned}\iint v_{y}^{2} &=\iint \left\{ \partial _{y}\left[
u_{s}\left\{ \frac{v}{u_{s}}\right\} \right] \right\} ^{2}\leq 2\iint
u_{s}^{2}\left\{ \frac{v}{u_{s}}\right\} _{y}^{2}+2\iint \{\partial
_{y}u_{s}\}^{2}\left\{ \frac{v}{u_{s}}\right\} ^{2} \\ &\leq 2\iint
u_{s}^{2}\left\vert \partial _{y}\left\{ \frac{v}{u_{s}}\right\} \right\vert
^{2}+2\iint \left[ \partial _{y}\left\{ \frac{v}{u_{s}}\right\} \right]
^{2}\times \sup_{x}\int y\{\partial _{y}u_{s}\}^{2}dy \\ &\leq 2\iint
u_{s}^{2}\left\vert \partial _{y}\left\{ \frac{v}{u_{s}}\right\} \right\vert
^{2}[1+\frac{1}{\min u_{s}^{2}}\times \sup_{x}\int y\{\partial
_{y}u_{s}\}^{2}dy] \\ &\lesssim \iint u_{s}^{2}\left\vert \partial
_{y}\left\{ \frac{v}{u_{s}}\right\} \right\vert ^{2}, \end{aligned}
\label{low-vy}
\end{equation}%
in which the last inequality used the estimate (\ref{uintegral}) on $u_{s}$.

In addition, the Dirichlet boundary condition at $x=0$ and the stress-free
boundary condition at $x=L$ as imposed in (\ref{bc}) are carefully designed
to ensure boundary contributions at $x=0$ and $x=L$ are controllable.

Our second ingredient is to derive $L^{\infty }$ estimate for the remainder
solution $[u^{\varepsilon},v^{\varepsilon}]$ and to close the nonlinear
analysis; Sections \ref{sec-sup} and \ref{sec-proof}. We have to overcome
the issue of regularity of solutions to the elliptic problem in domains with
corners. In particular, it is a subtlety to justify the integrability of all
terms in integration by parts, given the limited regularity provided for the
solution near the corners. We remark that in the case $u_{b}=0$, our
analysis does not directly apply due to the presence of zero points of the
profile solutions $u_{s}$, and hence the function $\frac{v^{\varepsilon}}{%
u_{s}}$ can no longer be used as a multiplier. Our positivity estimate is
lost in this limiting, but classical, case.

Finally, the third ingredient is the construction of profiles (or
approximate solutions) which enables us to establish the error estimates and
to close our nonlinear iteration. Such constructions are delicate (Section %
\ref{sec-profiles}), due to the regularity requirement of ${v_{pxx}^{1}}$ in
the remainder $R_{2}(u^{\varepsilon},v^{\varepsilon})$. In order to control
it, we need to create artificial new boundary layer at $y=0$ in (\ref{def-Eb}%
) to guarantee sufficient regularity for the first order Euler correction $%
[u_{e}^{1},v_{e}^{1},p_{e}^{1}]$. More importantly, to construct both $%
v_{e}^{1}$ and $v_{p}^{1}$, the positivity estimate \eqref{positivity-intro}
once again plays the decisive role; see Sections \ref{sec-1Euler} and \ref%
{sec-1Prandtl}.

We are not aware of any work in the literature that deals with the validity
of the Prandtl boundary layer theory for the steady Navier-Stokes flows. For
unsteady flows, there are very interesting contributions \cite{Asano,
SC1,SC2} in the analyticity framework, \cite{Maekawa} in the case where the
initial vorticity is assumed to be away from the boundary, or \cite{MT} for
special Navier-Stokes flows. An analogous program for unsteady flows as done
for the steady case in the precent paper appears not possible, due to the
fact that (unsteady) boundary layers are known to be very unstable; see, for
instance, \cite{EE, GVD, Grenier, GGN, GN}.

\textbf{Notation.} Throughout the paper, we shall use $\langle y \rangle = 
\sqrt{1+y^2}$, and $\|\cdot \|_{L^p}$ or occasionally $\|\cdot \|_p$ to
denote the usual $L^p$ norms, $p\ge 1$, with integration taken over $\Omega
= [0,L]\times \mathbb{R}_+$. We also use $\| \cdot \|_{L^p(0,L)}$ and $%
\|\cdot \|_{L^p(\mathbb{R}_+)}$ to denote the $L^p$ norms with integration
taken over $[0,L]$ and $\mathbb{R}_+$, respectively. We shall denote by $%
C(u_s, v_s)$ a universal constant that depends only on the given Euler flow $%
u_e^0$ and boundary data. Occasionally, we simply write $C$ or use the
notation $\lesssim$ in the estimates. By uniform estimates, we always mean
those that are independent of smallness of $\varepsilon$ and $L$. The
smallness of $L$ is determined depending only on the given data, whereas $%
\varepsilon$ is taken arbitrarily small, once the given data and $L$ are
fixed. In particular, $\varepsilon \ll L$.

%First, by  yields   
%\begin{equation}\label{lower}
%\min_{\Omega}\left\{ u_{e}^0(Y) + u_p^0(X, \frac{Y}{\sqrt \veps})\right\} >0.
%\end{equation}%
% 

\section{Construction of the approximate solutions}

\label{sec-profiles} In order to construct the approximate solutions, we
plug the Ansatz \eqref{expansion} into the scaled Navier-Stokes equations %
\eqref{scaledns}, and match the order in $\varepsilon$ to determine the
equations for the profiles. For our own convenience, let us introduce 
\begin{equation}  \label{approximate-soln}
\begin{aligned} u_\mathrm{app}(x,y) &= u^0_e(\sqrt \varepsilon y) + u^0_p
(x,y) + \sqrt \veps u^1_e (x,\sqrt \veps y) + \sqrt \veps u^1_p(x,y) \\
v_\mathrm{app}(x,y) &= v^0_p (x,y) + v^1_e (x,\sqrt \veps y) + \sqrt \veps
v^1_p(x,y) \\ p_\mathrm{app}(x,y)&= \sqrt \veps p_e^1(x,\sqrt \veps y) +
\sqrt \eps p^1_p(x,y) +\varepsilon p_p^2(x,y). \end{aligned}
\end{equation}%
We then calculate the error caused by the approximation: 
\begin{subequations}
\begin{align}
R^u_\mathrm{app}&: = [ u_\mathrm{app}\partial _{x} + v_\mathrm{app}
\partial_y ] u_\mathrm{app} + \partial_x p_\mathrm{app} - \Delta_\veps u_%
\mathrm{app}  \label{tangential} \\
R^v_\mathrm{app}&: = [ u_\mathrm{app}\partial _{x} + v_\mathrm{app}
\partial_y ] v_\mathrm{app} + \frac1\varepsilon\partial_y p_\mathrm{app} -
\Delta_\veps v_\mathrm{app},  \label{normal}
\end{align}%
or explicitly, 
\end{subequations}
\begin{equation*}
\begin{aligned} R^u_\mathrm{app} & = \Big[
\{u_{e}^{0}+u_{p}^{0}+\sqrt{\varepsilon }[u_{e}^{1}+u_{p}^{1}]\}\partial_x
+\{v_{p}^{0}+v_{e}^{1}+\sqrt{\varepsilon }v_{p}^{1} \}\partial _{y} \Big
]\{u_{e}^{0}+u_{p}^{0}+\sqrt{\varepsilon }[u_{e}^{1}+u_{p}^{1}] \} \\ &\quad
+\sqrt{\varepsilon }\partial _{x}\{p_{e}^{1}+ p_{p}^{1} + \sqrt \veps
p_p^2\} - (\partial_y^2 + \varepsilon \partial_x^2
)\{u_{e}^{0}+u_{p}^{0}+\sqrt{\varepsilon }[u_{e}^{1}+u_{p}^{1}] \} \\
R^v_\mathrm{app}& = \Big[ \{u_{e}^{0}+u_{p}^{0}+\sqrt{\varepsilon
}[u_{e}^{1}+u_{p}^{1}]\}\partial _{x}
+\{v_{p}^{0}+v_{e}^{1}+\sqrt{\varepsilon }v_{p}^{1}\}\partial _{y} \Big]
\{v_{p}^{0}+v_{e}^{1}+\sqrt{\varepsilon }v_{p}^{1}\} \\ &\quad
+\frac{1}{\sqrt{\varepsilon }}\partial _{y}\{p_{e}^{1}+p_{p}^{1}+\sqrt \veps
p_p^2\} - (\partial_y^2 + \varepsilon \partial_x^2
)\{v_{p}^{0}+v_{e}^{1}+\sqrt{\varepsilon }v_{p}^{1}\}, \end{aligned}
\end{equation*}
in which we recall that the Euler profiles are always evaluated at $(x,z) =
(x,\sqrt \varepsilon y)$. We shall construct the approximate solutions so
that $R^{u,v}_\mathrm{app}$ are being small in $\varepsilon$. In this
long section, we shall prove that

\begin{proposition}
\label{prop-approximate} Under the same assumptions as in Theorem \ref{main}%
, there are approximate solutions $[u_{\mathrm{app}},v_{\mathrm{app}},p_{%
\mathrm{app}}]$ so that 
\begin{equation*}
\Vert R_{\mathrm{app}}^{u}\Vert _{L^{2}}+\sqrt{\varepsilon }\Vert R_{\mathrm{%
app}}^{v}\Vert _{L^{2}}\leq C(L,\kappa )\varepsilon ^{3/4-\kappa },
\end{equation*}%
for arbitrarily small $\kappa >0$. Furthermore, there hold various
regularity estimates on the approximate solutions which are summarized in
Corollary \ref{cor-Prandtl0} for the zeroth-order Prandtl layers $%
[u_{p}^{0},v_{p}^{0}]$, Section \ref{sec-defE1} for the Euler profiles $%
[u_{e}^{1},v_{e}^{1}]$, and Section \ref{sec-defP1} for the Prandtl layers $%
[u_{p}^{1},v_{p}^{1}]$.
\end{proposition}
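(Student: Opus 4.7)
The plan is to build the approximate solution layer by layer, starting with the classical Prandtl solution and adding successive Euler/Prandtl correctors to cancel the dominant terms in $R^u_{\mathrm{app}}$ and $R^v_{\mathrm{app}}$, then to absorb the remaining contributions as an $O(\varepsilon^{3/4-\kappa})$ error. First I would construct the nonlinear zeroth order Prandtl layer $[u_p^0,v_p^0]$ by solving \eqref{prandtl}--\eqref{initial} on $[0,L]\times\mathbb{R}_+$ with the no-slip data $\bar u(x,0)=-u_e+u_b$; using the Oleinik/von Mises framework under the non-degeneracy hypothesis \eqref{nozero}, one obtains a local-in-$x$ solution on a short interval $[0,L]$ with full weighted regularity and exponential decay in $y$, together with the maximum principle bound that keeps $u_s\equiv u_e^0+u_p^0$ uniformly away from zero. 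The divergence-free condition then defines $v_p^0=\int_y^\infty u_p^0{}_x\,dy'$, which in general does not vanish as $y\to\infty$; this mismatch is precisely the source that drives the first Euler corrector.

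Next I would construct $[u_e^1,v_e^1,p_e^1]$ by solving the elliptic problem \eqref{ve1-elliptic} on $[0,L]\times\mathbb{R}_+$ with $v_e^1(x,0)=-v_p^0(x,0)$ (to restore the no-penetration condition) and matching data at $x=0,L$. Because $u_e^0$ may vanish as $Y\to\infty$, the Rayleigh-type operator $-\Delta+u^0_{eyy}/u_e^0$ is not obviously invertible; the decisive input is the positivity identity \eqref{positivity-intro} applied with $u_s\leftarrow u_e^0$, which gives the coercivity $\int u_e^0{}^2\lvert\partial_Y(v_e^1/u_e^0)\rvert^2$ needed to run an energy estimate and produce $v_e^1$ (then $u_e^1$ and $p_e^1$ from \eqref{sys-ve1}). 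Here I would need to introduce an artificial layer at $Y=0$ (as alluded to in the definition of $E_b$) so that $v_{e\,xx}^1$ has the regularity demanded by $R^v_{\mathrm{app}}$. After this step, the first Prandtl layer $[u_p^1,v_p^1,p_p^1]$ is obtained from the linearized Prandtl system \eqref{eqs-Pr02} around the base profile, with no-slip correction $u_p^1(x,0)=-u_e^1(x,0)$ and a source $E_p$ coming from $u_e^1$ evaluated in the layer variable; existence and weighted estimates again rely on a positivity/energy argument exploiting the positive sign of $u_e+u_p^0$.

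Because the Prandtl correctors do not decay in $y$ at the rate required by our $L^2$ framework, I would then multiply $v_p^0$ and $v_p^1$ by smooth cutoffs $\chi(\sqrt{\varepsilon}y)$ supported on $Y\le 1$, restoring divergence-freeness by readjusting $u_p^j$ with a small remainder that decays outside the layer. Finally, to kill the low-order term in $R^v_{\mathrm{app}}$ coming from $\partial_y p/\varepsilon$, one adds the second-order pressure correction $p_p^2$ by integrating the normal momentum equation. It then remains to collect all terms in $R^u_{\mathrm{app}},R^v_{\mathrm{app}}$: each is either (i) cancelled exactly by the construction of $[u_e^0+u_p^0]$, $[u_e^1,v_e^1]$, $[u_p^1,v_p^1]$ and $p_p^2$, or (ii) a higher-order residual from the $\sqrt{\varepsilon}$-expansion, or (iii) commutator terms coming from the cutoffs. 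Using the Prandtl/Euler estimates obtained in the previous steps, these residuals are estimated in $L^2$ by $C(L,\kappa)\varepsilon^{3/4-\kappa}$, the $\kappa$ loss appearing from cutoff commutators and from the limited regularity of $v_{p\,xx}^1$.

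The main obstacle I expect is step two: solving for the Euler corrector $[u_e^1,v_e^1]$ with sufficient regularity for $v_{e\,xx}^1$ to be integrable up to the corners of $[0,L]\times\mathbb{R}_+$, together with the delicate construction of $[u_p^1,v_p^1]$ whose tangential linearization has a large drift $v_s\partial_y$. Both rely crucially on the positivity estimate \eqref{positivity-intro} as the substitute for classical Hardy/Poincar\'e inequalities, and the smallness of $L$ (given by the hypothesis $|V_{bL}-V_{b0}|\lesssim L$) is what closes the fixed-point argument for the linearized Prandtl system with source $E_p$.
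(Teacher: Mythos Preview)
Your proposal is correct and follows essentially the same route as the paper: Oleinik/von Mises for $[u_p^0,v_p^0]$, the elliptic Rayleigh problem with the positivity identity and the artificial corrector $E_b$ for $[u_e^1,v_e^1]$, the linearized Prandtl system (again closed via the positivity estimate) for $[u_p^1,v_p^1]$, a cutoff, the pressure $p_p^2$, and a final bookkeeping of residuals. One small correction: in the paper only $[u_p^1,v_p^1]$ is truncated by $\chi(\sqrt{\varepsilon}y)$; the zeroth layer $v_p^0=\int_y^\infty u_{px}^0$ already decays thanks to the exponential decay of $u_p^0$, so no cutoff is applied there.
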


\subsection{Zeroth-order Prandtl layers}

\label{sec-defP0} The (leading) zeroth order terms on the right-hand side of
(\ref{tangential}) consist of 
\begin{equation*}
R^{u,0}: = \{u_{e}^{0}+u_{p}^{0}\}\{ u_e^0 + u_{p}^{0}\}_x
+\{v_{p}^{0}+v_{e}^{1}\} \{u_{e}^{0}+u_{p}^{0}\}_{y}-\{ u_e^0 +
u_{p}^{0}\}_{yy}
\end{equation*}
in which we note that $\{ u_e^0 \}_x =0$. Since the Euler flows are
evaluated at $(x,z)=(x,\sqrt \varepsilon y)$, we may write 
\begin{equation*}
\{v_{p}^{0}+v_{e}^{1}\}\partial _{y}u_{e}^{0}=\sqrt{\varepsilon }%
\{v_{p}^{0}+v_{e}^{1}\}u_{ez}^{0}
\end{equation*}%
and, with $u_e = u_e^0(0)$, 
\begin{equation*}
u_{e}^{0}u_{px}^{0}+v_{e}^{1}u_{py}^{0} =
u_{e}u_{px}^{0}+v_{e}^{1}(x,0)u_{py}^{0} + \sqrt{\varepsilon }u_{ez}(\sqrt{%
\varepsilon }y)yu_{px}^{0}+\sqrt{\varepsilon }v_{ez}^{1}(\sqrt{\varepsilon }%
y)yu_{py}^{0}+E^{0}
\end{equation*}
in which $E^{0}$ satisfies 
\begin{equation}  \label{e0}
\begin{aligned} E^{0} &:=\sqrt{\varepsilon }
u_{px}^{0}\int_{0}^{y}\{u_{ez}^{0}(\sqrt{\varepsilon }\theta
)-u_{ez}^{0}(\sqrt{\varepsilon }y)\}d\theta +\sqrt{\varepsilon
}u_{py}^{0}\int_{0}^{y}\{v_{ez}^{1}(\sqrt{\varepsilon }\theta
)-v_{ez}^{1}(\sqrt{\varepsilon }y)\}d\theta \\ &=\varepsilon
u_{px}^{0}\int_{0}^{y}\int_{y}^{\theta }u_{ezz}^{0}(\sqrt{\varepsilon }\tau
)d\tau d\theta +\varepsilon u_{py}^{0}\int_{0}^{y}\int_{y}^{\theta
}v_{ezz}^{1}(\sqrt{\varepsilon }\tau )d\tau d\theta . \end{aligned}
\end{equation}
In particular, $E_0$ is in the high order in $\varepsilon$, as to be proved
rigorously in the next section; see \eqref{est-E0}. To leading order, this
yields the nonlinear Prandtl problem for $u_p^0$: 
\begin{equation}  \label{prandtl0}
\left\{ \begin{aligned} \{u_{e} +u_{p}^{0}\} &u_{px}^{0}
+\{v_{p}^{0}+v_{e}^{1}(x,0)\}u_{py}^{0}=u_{pyy}^{0} , \qquad v_p^0(x,y): =
\int_y^\infty u^0_{px}\; dy , \\ u_{p}^{0} (x,0) &= u_b - u_e, \qquad
v_{p}^{0}(x,0) + v_{e}^{1}(x,0) = 0, \qquad u_p^0(0,y) = \bar u_0(y).
\end{aligned}\right.
\end{equation}%
Having constructed the Prandtl layer $[u_p^0, v_p^0]$, the zeroth order term 
$R^{u,0}$ is reduced to 
\begin{equation}  \label{Ru0}
R^{u,0} = \sqrt{\varepsilon } \{v_{p}^{0}+v_{e}^{1}\}u_{ez}^{0} + \sqrt{%
\varepsilon }u_{ez}(\sqrt{\varepsilon }y)yu_{px}^{0}+\sqrt{\varepsilon }%
v_{ez}^{1}(\sqrt{\varepsilon }y)y u_{py}^{0}- \varepsilon u_{ezz}^0+E^{0} ,
\end{equation}
which will be put into the next order in $\varepsilon$.

%We shall construct $u_p^0, v_e^1$ so that $yu_{px}^{0}$ and $y\{u_{p}^{0}\}_{y}$ are decaying at infinity, and $v_{ezz}^{1}(\cdot )$ is uniformly bounded in $L^{2}$. Then it follows that  
%\begin{equation*}
%\| E^{0}\|_{L^{2}} \lesssim \varepsilon \times \varepsilon ^{-1/4}
%\end{equation*}%
%due to the scaling $\sqrt{\varepsilon }y$, which can be absorbed into the remainder terms of order $\varepsilon
%^{-1/2-\gamma }$ for $\gamma \leq 1/4$. 
%

\begin{lemma}
\label{lem-Pr0} Let ${u_{p}^{0}}(0,y): = \bar u_0(y)$ be an arbitrary smooth
boundary data for the Prandtl layer at $x=0$. Assume that $%
\min_{y}\{u_{e}+\bar u_0(y)\}>0$. Then, there exists a positive number $L$
so that the problem (\ref{prandtl0}) has the unique smooth solution $%
u_p^0(x,y)$ in $[0,L]\times \mathbb{R}_+$. Furthermore, for all $n \ge 0$, $%
k\ge 0$, there exists a constant $C_0(n,k,\bar u_0)$ so that there holds the
uniform bound: 
\begin{equation}  \label{Pr0-bound}
\sup_{x\in [0,L]} \|\langle y\rangle^{n/2} \partial_x^k u_p^0\|_{L^2(\mathbb{%
R}_+)} +\| \langle y\rangle ^{n/2} \partial_x^k\partial_y u_p^0\|_{L^2(0,L;
L^2(\mathbb{R}_+))} \le C_0(n, k,\bar u_0),
\end{equation}
with $\langle y \rangle = \sqrt{1+|y|^2}$. Here, the constant $C(n,k, \bar
u_0)$ depends on $n,k$, and the $\langle y \rangle^n$-weighted $H^{2k}(%
\mathbb{R}_+)$ norm of the boundary value $\bar u_0(y)$.
\end{lemma}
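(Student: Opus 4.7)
\emph{Proof plan for Lemma \ref{lem-Pr0}.} The first step is to notice that the coupled condition $v_{e}^{1}(x,0)+v_{p}^{0}(x,0)=0$ together with $v_{p}^{0}(x,y)=\int_{y}^{\infty }u_{px}^{0}\,d\tilde{y}$ reduces the transport coefficient in (\ref{prandtl0}) to the classical Prandtl normal velocity, namely
\begin{equation*}
v_{p}^{0}(x,y)+v_{e}^{1}(x,0)=-\int_{0}^{y}u_{px}^{0}(x,\tilde{y})\,d\tilde{y}=:\bar{v}(x,y),
\end{equation*}
so that the system collapses to Oleinik's steady Prandtl equation $\{u_{e}+u_{p}^{0}\}u_{px}^{0}+\bar{v}\,u_{py}^{0}=u_{pyy}^{0}$, with $\bar{v}$ and $u_{p}^{0}$ linked by $u_{px}^{0}+\bar{v}_{y}=0$, $\bar{v}(x,0)=0$, together with the prescribed data at $x=0$ and $y=0$.

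For existence and uniqueness, I would invoke the classical von Mises / Crocco strategy. Under the assumption $\min_{y}\{u_{e}+\bar{u}_{0}(y)\}>0$, introduce the stream function $\psi (x,y)=\int_{0}^{y}(u_{e}+u_{p}^{0})(x,s)\,ds$; since $\psi _{y}=u_{e}+u_{p}^{0}>0$ near $x=0$, the change of variables $(x,y)\mapsto (x,\psi )$ is a local diffeomorphism, and in Crocco coordinates $(x,u)$ the quantity $\tau =u_{y}$ satisfies a nonlinear degenerate parabolic equation whose local well-posedness is standard (Oleinik). A continuation argument using the maximum principle for Prandtl (guaranteeing $u_{e}+u_{p}^{0}\geq \tfrac{1}{2}\min_{y}\{u_{e}+\bar{u}_{0}(y)\}$ on a nontrivial interval) yields a smooth solution on $[0,L]\times \mathbb{R}_{+}$ for sufficiently small $L$. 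Returning to $(x,y)$ coordinates gives the asserted smoothness, and uniqueness follows from a weighted $L^{2}$ estimate on the difference of two solutions.

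For the weighted regularity estimate \eqref{Pr0-bound}, I would argue by induction on $k$. For the base case $k=0$, multiply the Prandtl equation by $\langle y\rangle ^{n}u_{p}^{0}$ and integrate; the diffusion term yields $\int \langle y\rangle ^{n}(u_{py}^{0})^{2}$ up to lower-order boundary contributions from differentiating the weight, while the convective terms combine (using $u_{px}^{0}+\bar{v}_{y}=0$) into the conservative form
\begin{equation*}
\tfrac{1}{2}\partial _{x}\!\int (u_{e}+u_{p}^{0})\langle y\rangle ^{n}(u_{p}^{0})^{2}\,dy-\tfrac{1}{2}\!\int \bar{v}\,\partial _{y}\langle y\rangle ^{n}(u_{p}^{0})^{2}\,dy,
\end{equation*}
where the last term is handled via the Hardy-type bound $\bar{v}/y\lesssim \sup_{y}|u_{px}^{0}|$ and absorbed into the diffusion using the positivity $u_{e}+u_{p}^{0}\geq c_{0}>0$. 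A Gr\"{o}nwall argument on the short interval $[0,L]$ closes the estimate for $k=0$, with the constant depending on the weighted $H^{2}(\mathbb{R}_{+})$ norm of $\bar{u}_{0}$. For the inductive step, differentiate the Prandtl equation $k$ times in $x$ and pair with $\langle y\rangle ^{n}\partial _{x}^{k}u_{p}^{0}$; the commutators $[\partial _{x}^{k},u_{p}^{0}\partial _{x}]$ and $[\partial _{x}^{k},\bar{v}\partial _{y}]$ produce only lower-order factors $\partial _{x}^{j}u_{p}^{0}$ with $j<k$ together with higher weights, bounded by the induction hypothesis after choosing $n$ large enough inside the inductive scheme.

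The principal obstacle is the non-local term $\bar{v}$ and its $x$-derivatives, which are $y$-integrals of $\partial _{x}^{k+1}u_{p}^{0}$ and, in particular, do not decay as $y\to \infty $. The saving structure is the antisymmetry of the Prandtl transport: since $u_{px}^{0}+\bar{v}_{y}=0$, the drift operator $(u_{e}+u_{p}^{0})\partial _{x}+\bar{v}\partial _{y}$ is skew-adjoint with respect to the measure $\langle y\rangle ^{n}\,dy$ up to terms that differentiate the weight, and those defective terms are controlled by Hardy's inequality together with the smallness of $L$. This, together with the uniform positivity of $u_{s}=u_{e}+u_{p}^{0}$ provided by the hypothesis (\ref{nozero}) and the maximum principle, is what ultimately closes the weighted a priori bound for all $n,k\geq 0$.
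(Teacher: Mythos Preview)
Your reduction of \eqref{prandtl0} to the classical steady Prandtl system and the appeal to the von Mises transformation for local existence are correct and match the paper. The maximum-principle lower bound on $u_e+u_p^0$ is exactly what the paper uses (see \eqref{minw}).

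The genuine gap is in your higher-order weighted estimate. Your claim that the commutators $[\partial_x^k,\bar v\,\partial_y]$ ``produce only lower-order factors $\partial_x^j u_p^0$ with $j<k$'' is not correct. Since $\bar v=-\int_0^y u^0_{px}$, one has $\partial_x^k\bar v=-\int_0^y \partial_x^{k+1}u_p^0$, so the top commutator term $(\partial_x^k\bar v)\,u^0_{py}$ carries the nonlocal integral of the $(k{+}1)$-st $x$-derivative. After pairing with $\langle y\rangle^n\partial_x^k u_p^0$ you face
\[
\int \langle y\rangle^n\Big(\int_0^y \partial_x^{k+1}u_p^0\Big)\,u^0_{py}\,\partial_x^k u_p^0\,dy,
\]
and neither the diffusive term $\int\langle y\rangle^n(\partial_x^k u^0_{py})^2$ nor the transport term $\tfrac{d}{dx}\int (u_e+u_p^0)\langle y\rangle^n(\partial_x^k u_p^0)^2$ controls $\partial_x^{k+1}u_p^0$. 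This is precisely the well-known loss of one $x$-derivative for the linearized Prandtl operator in the physical variables; skew-adjointness of $(u_e+u_p^0)\partial_x+\bar v\,\partial_y$ does not help because the offending term comes from $(\partial_x^k\bar v)\,u^0_{py}$, not from the principal transport. A similar circularity already appears in your base case, where you invoke $\sup_y|u^0_{px}|$ to bound the weight commutator before any control on $u^0_{px}$ has been established.

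The paper sidesteps this loss by carrying out \emph{all} of the weighted estimates in the von Mises variable $\eta=\int_0^y(u_e+u_p^0)$, where the equation becomes the local quasilinear parabolic problem $\mathbf w_x=(\mathbf w\,\mathbf w_\eta)_\eta$ for $\mathbf w=u_e+u_p^0$. In these coordinates the normal velocity $\bar v$ has been eliminated, the commutators $[\partial_x^j,\cdot]$ are genuinely lower order, and an iterative weighted energy norm $\mathcal N_j(x)=\sum_{k\le j}\int\langle\eta\rangle^n|\partial_x^k w|^2+\sum_{k\le j}\int_0^x\!\!\int\langle\eta\rangle^n|\partial_x^k w_\eta|^2$ closes by Gr\"onwall. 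Returning to $(x,y)$ at the end (using $y\sim\eta$ from the two-sided bound on $\mathbf w$) yields \eqref{Pr0-bound}. If you wish to stay in $(x,y)$, you would need either a ``good unknown'' cancellation (\`a la Masmoudi--Wong) or the recursive substitution of $\partial_x^k\bar v$ via the equation; neither is what you wrote.
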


\begin{corollary}
\label{cor-Prandtl0} Let $u_p^0$ be the Prandtl layer constructed as in
Lemma \ref{lem-Pr0}. Then, there holds 
\begin{equation}  \label{Pr0-Hbound}
\sup_{x\in [0,L]} \|\langle y\rangle^{n/2} \partial_x^k\partial_y^j
[u_p^0,v_p^0]\|_{L^2(\mathbb{R}_+)} \le C_0(n, k, j, \bar u_0),
\end{equation}
for arbitrary $n,k,j$.
\end{corollary}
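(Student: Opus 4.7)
My plan is to deduce Corollary \ref{cor-Prandtl0} directly from Lemma \ref{lem-Pr0} by exploiting two structural observations: the divergence-free relation $v_{p}^{0}(x,y)=\int_{y}^{\infty }\partial _{x}u_{p}^{0}(x,y')\,dy'$ reduces all bounds on $v_{p}^{0}$ to bounds on $\partial_{x}u_{p}^{0}$, and the Prandtl equation \eqref{prandtl0} lets us trade any $\partial _{y}^{2}$ hitting $u_{p}^{0}$ for combinations of $\partial_{x}$-derivatives and a single $\partial _{y}$, which Lemma \ref{lem-Pr0} already controls.

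For $v_{p}^{0}$, differentiating the integral representation gives $\partial_{x}^{k}\partial _{y}^{j}v_{p}^{0}=-\partial _{x}^{k+1}\partial_{y}^{j-1}u_{p}^{0}$ for every $j\ge 1$, which immediately reduces those cases to the $u_{p}^{0}$ estimates. For the remaining case $j=0$, Cauchy--Schwarz on the defining integral yields the pointwise bound
\[
|\partial _{x}^{k}v_{p}^{0}(x,y)|^{2}\le \Big(\int_{y}^{\infty }\langle y'\rangle ^{-M}\,dy'\Big)\Big(\int_{y}^{\infty }\langle y'\rangle ^{M}|\partial _{x}^{k+1}u_{p}^{0}|^{2}\,dy'\Big)\le C\langle y\rangle ^{1-M}\,C_{0}(M,k{+}1,\bar{u}_{0})^{2},
\]
valid for any $M$. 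Choosing $M>n+2$ makes $\int \langle y\rangle ^{n}\langle y\rangle ^{1-M}\,dy<\infty $, giving the desired weighted $\sup_{x}L^{2}$ bound.

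For the higher $y$-derivatives of $u_{p}^{0}$, I would first upgrade Lemma \ref{lem-Pr0}'s $L^{2}(0,L;L^{2})$-in-$x$ bound on $\partial _{x}^{k}\partial _{y}u_{p}^{0}$ to a $\sup_{x}$-in-$x$ bound via the fundamental-theorem identity
\[
\|\langle y\rangle ^{n/2}\partial _{x}^{k}\partial _{y}u_{p}^{0}(x,\cdot )\|_{L^{2}}^{2}=\|\langle y\rangle ^{n/2}\partial _{x}^{k}\partial _{y}u_{p}^{0}(0,\cdot )\|_{L^{2}}^{2}+2\int_{0}^{x}\int \langle y\rangle ^{n}\,\partial _{x}^{k}\partial _{y}u_{p}^{0}\,\partial _{x}^{k+1}\partial _{y}u_{p}^{0}\,dy\,dx',
\]
the initial term being controlled by the exponential decay of $\bar{u}_{0}$ (after using the equation to convert $x$-derivatives at $x=0$ into $y$-derivatives of $\bar{u}_{0}$), and the integral by Cauchy--Schwarz against Lemma \ref{lem-Pr0}'s $L^{2}(0,L;L^{2})$ control on both $\partial _{x}^{k}\partial _{y}u_{p}^{0}$ and $\partial _{x}^{k+1}\partial _{y}u_{p}^{0}$. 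Then rewriting \eqref{prandtl0} as $\partial _{y}^{2}u_{p}^{0}=(u_{e}+u_{p}^{0})\partial _{x}u_{p}^{0}+(v_{p}^{0}+v_{e}^{1}(x,0))\partial _{y}u_{p}^{0}$ and differentiating repeatedly in $x$ and $y$ expresses any $\partial _{x}^{k}\partial _{y}^{j}u_{p}^{0}$ with $j\ge 2$ as a finite sum of products of factors $\partial _{x}^{k'}\partial _{y}^{j'}u_{p}^{0}$ and $\partial _{x}^{k'}\partial _{y}^{j'}v_{p}^{0}$ with $j'<j$, together with the trace $v_{e}^{1}(x,0)=-v_{p}^{0}(x,0)$. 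An induction on $j$, using the two previous steps to bound each factor, closes the argument.

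The main technical obstacle is the weight bookkeeping: each application of the Prandtl equation turns one $\partial _{y}$ on $u_{p}^{0}$ into a product of two lower-order pieces, and the $\langle y\rangle ^{n/2}$ weight must be split between the factors so that one factor (bounded in $L^{\infty }$ via a one-dimensional Sobolev embedding from a higher-weight $L^{2}$ estimate) absorbs enough of the weight to leave the right exponent on the other. Because Lemma \ref{lem-Pr0} provides bounds for \emph{every} weight exponent $n$ and every $x$-derivative order $k$, one may always enlarge the auxiliary weight and derivative count to make H\"older close; this is a purely algebraic accounting step, with no new analytic input beyond the lemma.
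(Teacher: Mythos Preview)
Your proposal is correct and follows essentially the same approach as the paper: use the divergence-free relation to reduce $v_p^0$ to $\partial_x u_p^0$, and use the Prandtl equation \eqref{prandtl0} to bound $\partial_y^2 u_p^0$ in terms of lower-order derivatives, then induct on $j$. The paper's proof is a two-line sketch stating exactly this; you have filled in the details the paper omits, in particular the upgrade of the $L^2(0,L;L^2(\mathbb{R}_+))$ control on $\partial_x^k\partial_y u_p^0$ to $\sup_x L^2(\mathbb{R}_+)$ and the weight bookkeeping in the product estimates.
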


\begin{proof} Indeed, the proof follows directly from Lemma \ref{lem-Pr0} and a use of equations \eqref{prandtl0} for the Prandtl layer to bound $\partial_y^2 u_p^0$ by those of lower-order derivative terms. 
\end{proof}

\begin{proof}[Proof of Lemma \ref{lem-Pr0}] Let $u_e = u_e^0(0)$. Following Oleinik \cite{Oleinik}, we use the von Mises transformation: 
$$\eta : =\int_{0}^{y} ( u_e+u_{p}^{0} (x,\theta ) )d\theta , \qquad \mathbf{w}(x,\eta ): =  u_e+u_{p}^{0}(x,y(\eta )).$$
The function $\mathbf{w}$ then solves
\begin{equation*}
\mathbf{w}_{x}=\{\mathbf{ww}_{\eta }\}_{\eta }
\end{equation*}%
on $[0,L]\times \RR_+$. Note that by the standard Maximum Principle (to the equation for ${\bf w}^2$), we have 
\begin{equation}\label{minw}{\bf w} \ge \min_y \{u_b, u_e+u_{p}^{0}|_{x=0}\} \ge c_0>0,\end{equation} 
for some positive constant $c_0$. Hence, the above is a non-degenerate
parabolic equation. Since ${\bf w}$ does not vanish on the boundary, we introduce $w=\mathbf{w}-u_{e}-[u_{b}-u_{e}]e^{-\eta }$. Hence, it follows that $w$ vanishes at both $%
y=0$ and $y=\infty ,$ and there holds
\begin{equation}\label{eqs-w}
{w}_{x}=[\mathbf{w}{w}_{\eta }]_{\eta }-[u_{b}-u_{e}][ we^{-\eta }]_{\eta } - F_\eta , \qquad F(\eta ): = [u_{b}-u_{e}]
[u_{e}+[u_{e}-u_{b}]e^{-\eta }]e^{-\eta }.
\end{equation}%
Clearly, $\langle \eta  \rangle^n F(\cdot) \in W^{k,p}(\RR_+)$, for arbitrary $n,k \ge 0$ and  $p \in [1,\infty]$. We shall solve this equation via the standard contraction mapping.

First, let us derive a priori weighted estimates. We introduce the following weighted iterative norm:
\begin{equation}\label{def-Nj} \cN_j(x): = \sum_{k=0}^j \int \langle \eta \rangle ^{n}| \partial_x^k{w}|^{2}+\sum_{k=0}^j\int_0^x \int \langle \eta \rangle ^{n}|%
\partial_x^k {w}_{\eta }|^{2}, \qquad j\ge 0.\end{equation}
By multiplying the equation \eqref{eqs-w} by $\langle \eta \rangle^n  w$, it follows the standard weighted energy estimate:
\begin{equation*}
\frac{1}{2}\frac{d}{dx}\int \langle \eta \rangle ^{n}| {w}|^{2}+\int \langle
\eta \rangle ^{n} {\bf w}| {w}_{\eta }|^{2}\le  \int \Big[ 
\langle \eta \rangle ^{n-1} | w| | w_\eta | + \langle \eta  \rangle^n e^{-\eta } | w  w_\eta | + \langle \eta  \rangle^n | w F_\eta | \Big],  
\end{equation*}%
for $n\ge 0$, which together with the Young's inequality yields 
\begin{equation*}
\frac{1}{2}\frac{d}{dx}\int \langle \eta \rangle ^{n}| {w}|^{2}+\int \langle
\eta \rangle ^{n} {\bf w}| {w}_{\eta }|^{2}\le C \int 
\langle \eta \rangle^n | w|^2 + C \int \langle \eta  \rangle^n |F_\eta |^2.
\end{equation*}%
The Gronwall inequality then yields 
\begin{equation}\label{L2bound-Pr0}
\sup_{0\le x \le L} \int \langle \eta \rangle ^{n}| {w}|^{2}+\int_0^L \int \langle \eta \rangle ^{n}|%
 {w}_{\eta }|^{2}\le C(L),
\end{equation}%
which gives $\cN_0(L)\le C(L)$, for some constant $C(L)$ that depends only on large $L$ and the give data $u_e, u_b, c_0$ in the problem.

Next, taking $x-$derivatives of \eqref{eqs-w}, we get  
\begin{equation*}
\partial _{x}^{j} {w}_{x}=\{{\bf w}\partial _{x}^{j} {w}_{\eta }\}_{\eta }+\sum_{\alpha =0}^j
C_{j}^{j-\alpha }\{\partial _{x}^{j-\alpha }\mathbf{w}\partial _{x}^{\alpha }%
 {w}_{\eta }\}_{\eta }-[u_{b}-u_{e}][\partial _{x}^{j} we^{-\eta }]_{\eta }
\end{equation*}%
for $j \ge 1$. Similarly as above, multiplying the equation by $\langle \eta \rangle ^{n}\partial _{x}^{j} {w}$ yields the inequality 
\begin{eqnarray}
&&\frac{1}{2}\frac{d}{dx}\int \langle \eta \rangle ^{n}|\partial _{x}^{j} {w}%
|^{2}+\int \langle \eta \rangle ^{n}\mathbf{w}|\partial _{x}^{j} {w}_{\eta }|^{2}
\notag\\
&\le  & n\int \langle \eta \rangle ^{n-1}\mathbf{w}\partial _{x}^{j} {w}%
_{\eta }\partial _{x}^{j} {w} + C\sum_{\alpha<j} \int \langle \eta \rangle ^{n}\{\partial
_{x}^{j-\alpha }\mathbf{w}\partial _{x}^{\alpha } {w}_{\eta }\}\partial
_{x}^{j} {w}_{\eta } \label{Hj-est} 
\\&&+C\sum_{\alpha<j}\int \langle \eta \rangle ^{n-1}\{\partial _{x}^{j-\alpha }%
\mathbf{w}\partial _{x}^{\alpha } {w}_{\eta }\}\partial _{x}^{j} {w} - [u_{b}-u_{e}]
\int[\partial _{x}^{j} we^{-\eta }]_\eta   \langle \eta \rangle^n \partial _{x}^{j}%
 {w} .\notag 
\end{eqnarray}
Let us treat each term on the right. For arbitrary positive constant $\delta$, we get 
\begin{eqnarray*}
\int \langle \eta \rangle ^{n-1}\mathbf{w}\partial _{x}^{j} {w}%
_{\eta }\partial _{x}^{j} {w} &\le& \delta \int \mathbf{w}\langle \eta \rangle ^{n}|\partial _{x}^{j} {%
w}_{\eta }|^{2}+C_\delta\|\mathbf{w}\|_\infty\int \langle \eta \rangle ^{n}|\partial _{x}^{j}%
 {w}|^{2}
\\ 
\int[\partial _{x}^{j} we^{-\eta }]_\eta   \langle \eta \rangle^n \partial _{x}^{j}%
 {w} &\le &\delta \int \mathbf{w}\langle \eta \rangle ^{n}|\partial _{x}^{j} {%
w}_{\eta }|^{2} + C_\delta \int \langle \eta \rangle ^{n}|\partial _{x}^{j}%
 {w}|^{2} .
\end{eqnarray*}
By choosing $\delta$ sufficiently small, the first term in the above inequalities can be absorbed into the left-hand side of the inequality \eqref{Hj-est}.  Next, for $0<\alpha<j$, we have
\begin{eqnarray*}
\int \langle \eta \rangle ^{n}\{\partial
_{x}^{j-\alpha }\mathbf{w}\partial _{x}^{\alpha } {w}_{\eta }\}\partial
_{x}^{j} {w}_{\eta } &\le & \delta \int \mathbf{w}\langle \eta \rangle ^{n}|\partial _{x}^{j} {%
w}_{\eta }|^{2} + C_\delta
\|\partial _{x}^{j-\alpha }\mathbf{w}\|^2_{\infty }
\int \langle \eta \rangle ^n{\bf w}|\partial _{x}^{\alpha} {w}%
_{\eta }|^2 
\\
\int \langle \eta \rangle ^{n-1}\{\partial _{x}^{j-\alpha }%
\mathbf{w}\partial _{x}^{\alpha } {w}_{\eta }\}\partial _{x}^{j} {w} &\le & C\|\partial _{x}^{j-\alpha }\mathbf{w}\|_{\infty }
\|\sqrt{\langle \eta \rangle^n {\bf w}}\partial _{x}^{\alpha } {w}%
_{\eta }\|_{L^2(\RR_+)} \|\langle \eta \rangle ^{\frac{n-2}{2}}\partial _{x}^{j} {w}\|_{L^2(\RR_+)}
\\
&\le & \delta \int \mathbf{w}\langle \eta \rangle ^{n}|\partial _{x}^{\alpha} {%
w}_{\eta }|^{2} + C_\delta
\|\partial _{x}^{j-\alpha }\mathbf{w}\|^2_{\infty }
\int \langle \eta \rangle ^n |\partial _{x}^{j} {w}|^2 
\end{eqnarray*}%
Whereas in the case $\alpha =0$, we instead estimate
\begin{eqnarray*}
\int \langle \eta \rangle ^{n}\{\partial
_{x}^{j }\mathbf{w} {w}_{\eta }\}\partial
_{x}^{j} {w}_{\eta } &\le & \delta \int \mathbf{w}\langle \eta \rangle ^{n}|\partial _{x}^{j} {%
w}_{\eta }|^{2} + 
C_\delta\| w_\eta \|^2_{\infty }
\int \langle \eta \rangle ^n|\partial_x^j {\bf w}|^2 
\\
\int \langle \eta \rangle ^{n-1}\{\partial _{x}^{j}%
\mathbf{w} {w}_{\eta }\}\partial _{x}^{j} {w} &\le &\|w_\eta \|_{\infty }
\|\sqrt{\langle \eta \rangle^n} \partial _{x}^{j} {\bf w} \|_{L^2(\RR_+)} \|\langle \eta \rangle ^{\frac{n-2}{2}}\partial _{x}^{j} {w}\|_{L^2(\RR_+)}.
\end{eqnarray*}%

It remains to give bounds on $\|w_\eta \|_\infty$ and $\|\partial _{x}^{j-\alpha }\mathbf{w}\|_{\infty }$, for $0<\alpha \le j$. We recall the definition $\mathbf{w} = w+ u_{e}+ [u_{b}-u_{e}]e^{-\eta }$. Using the Sobolev embedding, we get
\begin{eqnarray*}
\|\partial _{x}^{j-\alpha }\mathbf{w}\|^2_{\infty } 
&\le & C + \|\partial _{x}^{j-\alpha } w\|^2_{\infty } \quad \le\quad  C +  \|\partial _{x}^{j-\alpha } w\|_{L^2(\RR_+)} \|\partial _{x}^{j-\alpha } w_\eta \|_{L^2(\RR_+)}
\\&\le & C +  \|\partial _{x}^{j-\alpha } w\|_{L^2(\RR_+)} \Big[ \|\partial _{x}^{j-\alpha } {w_\eta }_{\vert_{x=0}}\|_{L^2(\RR_+)} +  \|\partial _{x}^{j-\alpha } w_\eta \|_{L^2} ^{1/2} \|\partial _{x}^{j-\alpha +1} w_\eta \|_{L^2}^{1/2} \Big]
\end{eqnarray*}%
in which $\| \cdot \|_{L^2}$ denotes the usual $L^2$ norm over $[0,x]\times \RR_+$. In term of the iterative norm $\cN_j(x)$, this yields \begin{eqnarray}\label{sup-bfw}
\|\partial _{x}^{j-\alpha }\mathbf{w}\|^2_{\infty } 
&\leq &C+ \cN_{j-\alpha}^{1/2}(x)  \Big[ \|\partial _{x}^{j-\alpha } {w_\eta }_{\vert_{x=0}}\|_{L^2(\RR_+)} +  \cN_{j}^{1/2}(x) \Big] ,
\end{eqnarray}%
for $0<\alpha \le j$. Next, to estimate $\| w_\eta \|_\infty$, we use the embedding: 
$ \|w_\eta  \|_{\infty}^2 \le C\| w_\eta \|_{L^2(\RR_+)} \| w_{\eta }\|_{H^1(\RR_+)}$. Using the equation \eqref{eqs-w} and the lower bound on ${\bf w}$, we get 
$$ \| w_{\eta \eta }\|_{L^2(\RR_+)} \le C  \Big(  \| w_x\|_{L^2(\RR_+)}  + \| w\|_{H^1(\RR_+)}+\| w_\eta ^2 \|_{L^2(\RR_+)} + \| F_\eta \|_{L^2(\RR_+)}\Big)$$
in which the Sobolev embedding for the supremum norm again yields 
$$\| w_\eta ^2 \|_{L^2(\RR_+)} \le C \| w_\eta  \|_{L^2(\RR_+)}^{3/2} \| w_\eta \|_{H^1}^{1/2} \le \delta \| w_{\eta \eta }\|_{L^2(\RR_+)} + C_\delta\| w_\eta \|_{L^2(\RR_+)}^3.$$ 
Choosing $\delta$ sufficiently small, we conclude that 
\begin{equation}\label{sup-wz}\| w_{\eta }\|_\infty \quad \lesssim \quad 1 +  \| w_x\|_{L^2(\RR_+)}  + \| w\|_{H^1(\RR_+)}+\| w_\eta  \|^2_{L^2(\RR_+)} .\end{equation}
Hence, integrating the above inequality over $[0,x]$, recalling the definition of the iterative norm, and using the uniform bound on $\cN_0(L)$, we obtain 
\begin{equation}\label{L2sup-wz}
\begin{aligned}
\int_0^x\| w_{\eta }\|^2_\infty 
&\lesssim 1+  \int_0^x (\| w_x\|^2_{L^2(\RR_+)}  + \| w\|^2_{H^1(\RR_+)})+\sup_{s\in [0,x]} \| w_\eta \|^2_{L^2(\RR_+)}\int_0^x \| w_\eta  \|^2_{L^2(\RR_+)}
\\
&\lesssim 1+  \int_0^x \| w_x\|^2_{L^2(\RR_+)}  +\| {w_\eta }_{\vert_{x=0}}\|^2_{L^2(\RR_+)} + \| w_{x\eta }\|_{L^2}^2
\\
&\lesssim 1+ \| {w_\eta }_{\vert_{x=0}}\|^2_{L^2(\RR_+)}  +  \int_0^x \cN_1(s)\; ds.
\end{aligned}
\end{equation}

Putting the above estimates altogether into the $j^{th}$ weighted estimates \eqref{Hj-est}, integrating the result over $[0,x]$ and rearranging terms, we obtain 
\begin{equation}\label{iter-Pr0}\begin{aligned}
 N_j(x) &\lesssim \int_0^x(1+ \|\mathbf{w}\|_\infty + \| w_\eta \|_\infty^2) \int \langle \eta \rangle^n|\partial _{x}^{j}%
 {w}|^{2} + \|\partial _{x}^{j-\alpha }\mathbf{w}\|^2_{\infty } \sum_{\alpha <j} \iint \langle \eta \rangle ^n{\bf w}|\partial _{x}^{\alpha} {w}%
_{\eta }|^2 
 \\
 & \quad+ \|\partial _{x}^{j-\alpha }\mathbf{w}\|_{\infty } \sum_{\alpha<j}
\|\sqrt{\langle \eta \rangle^n {\bf w}}\partial _{x}^{\alpha } {w}%
_{\eta }\|_{L^2} \|\langle \eta \rangle ^{\frac{n-2}{2}}\partial _{x}^{j} {w}\|_{L^2} 
\\
&\lesssim \cN_j(0) + \int_0^x \Big[ 1+ \cN_1(s) +  \cN_{j-1}(x) \Big]   \cN_j(s) \; ds .
 \end{aligned}\end{equation}
The Gronwall inequality then yields 
$$ \cN_j(L) \le C \cN_j(0),\qquad \forall j\ge 0,$$
for $L$ sufficiently small. Here, we note that the smallness of $L$ and the constant $C$ depend only on the given data in the problem. The standard contraction mapping, together with a priori bounds, yields the existence and the uniform bound of the solutions to \eqref{eqs-w} in $[0,L]\times \RR_+$. Changing back to the original coordinates yields the lemma, upon noting that $y \sim \eta $ thanks to the upper and lower bound of ${\bf w}$. 
\end{proof}

\begin{remark}
\emph{It is possible to iterate our above scheme to obtain a global-in-$x$
solution to the Prandtl equation. Indeed, the $L^2$ estimate %
\eqref{L2bound-Pr0} yields the global existence of a bounded weak solution.
The standard Nash-Moser's iteration applied to the parabolic equation %
\eqref{eqs-w} then yields a uniform bound in $C^1$ and hence $H^1$ spaces.
By a view of the iterative estimate \eqref{iter-Pr0} which is only nonlinear
at the first step for $\mathcal{N}_1$, it follows a uniform bound for all $%
\mathcal{N}_k$, for $k \ge 1$, uniformly in small $L$. This yields the
global smooth solution. }
\end{remark}

\subsection{$\protect\varepsilon ^{1/2}$-order corrections}

\label{sec-1Euler} Next, we collect all terms with a factor $\sqrt{%
\varepsilon }$ from (\ref{tangential}), together with the new $\sqrt{%
\varepsilon }$-order terms arising from $R^{u,0}$ (see \eqref{Ru0}), to get 
\begin{equation*}
\begin{aligned} R^{u,1}&:=[u_{e}^{1}+u_{p}^{1}]\partial_x
[u_e^0+u_{p}^{0}]+[u_{e}^{0}+u_{p}^{0}]\partial
_{x}[u_{e}^{1}+u_{p}^{1}]+v_{p}^{1}\partial
_{y}[u_{e}^{0}+u_{p}^{0}]+[v_{p}^{0}+v_{e}^{1}]\partial_y
[u_{e}^{1}+u_{p}^{1}] \\ &\quad +p_{ex}^{1} + p_{px}^1 -\partial_y^2 [u_e^1
+
u_{p}^{1}]+[yu_{px}^{0}+v_{p}^{0}+v_{e}^{1}]u_{ez}^{0}+yv_{ez}^{1}u_{py}^{0}. \end{aligned}
\end{equation*}%
We construct the Euler and Prandtl layers so that $R^{u,1}$ is of order $%
\sqrt{\varepsilon }$. We rearrange terms with respect to the interior
variables $(x,\sqrt{\varepsilon }y)$ and the boundary-layer variables $(x,y)$%
, respectively. We stress that when the partial derivative $\partial _{y}$
hits an interior term with scaling $\sqrt{\varepsilon }y$, that term can be
moved to the next order. For instance, $[v_{p}^{0}+v_{e}^{1}]\partial
_{y}u_{e}^{1}=\sqrt{\varepsilon }[v_{p}^{0}+v_{e}^{1}]u_{ez}^{1}(\sqrt{%
\varepsilon }y)$. Having this in mind, the leading interior terms consist of 
\begin{equation}
u_{e}^{0}u_{ex}^{1}+v_{e}^{1}u_{ez}^{0}+p_{ex}^{1}=0  \label{Euler1-u}
\end{equation}%
and the boundary-layer terms consisting of 
\begin{equation}
\begin{aligned} ~[u_{e}^{1}+u_{p}^{1}]u_{px}^{0}
&+u_{p}^{0}u_{ex}^{1}+[u_{e}^{0}+u_{p}^{0}]u_{px}^{1}+v_{p}^{1}[ u_{py}^{0}
+ \sqrt \veps u_{ez}^0]+[v_{p}^{0}+v_{e}^{1}]u_{py}^{1} + p_{px}^1
-u_{pyy}^{1}\\ &+ [yu_{px}^{0}+v_{p}^{0}]u_{ez}^{0}+yv_{ez}^{1}u_{py}^{0} =
0, \end{aligned}  \label{Prandtl1-u}
\end{equation}%
in which the equalities are made to precisely get rid of these leading
terms. Hence, having constructed these layers, the error is then reduced to 
\begin{equation}
\sqrt{\varepsilon }[v_{p}^{0}+v_{e}^{1}]u_{ez}^{1} - \varepsilon u_{ezz}^1.
\label{est-Ru1}
\end{equation}

Next, let us consider the normal component (\ref{normal}). Clearly, the
leading term is $\frac{1}{\sqrt \varepsilon} p_{py}^1$, which leads to the
fact that Prandtl's pressure is independent of $y$: 
\begin{equation}  \label{Prandtl1-v}
p_p^1 = p_p^1(x).
\end{equation}
The next (zeroth) order in \eqref{normal} consists of 
\begin{equation*}
R^{v,0}: = [u_{e}^{0}+u_{p}^{0}] [
v_{px}^{0}+v_{ex}^{1}]+[v_{p}^{0}+v_{e}^{1}]\partial_y [v_{p}^{0}+ v_e^1]
+p_{ez}^{1} + p^2_{py}- \partial_y^2 [v_{p}^{0} + v_e^1].
\end{equation*}%
Again as above, we shall enforce $R^{v,0} =0$ (possibly, up to error of
order $\sqrt \varepsilon$). Note that $v_p^1$ has now been determined
through the divergence-free condition and the construction of $u_p^1$. We
take the interior layer $[u_e^1, v_e^1, p_e^1]$ to satisfy 
\begin{equation}  \label{Euler1-v}
u_{e}^{0} v_{ex}^{1}+p_{ez}^{1} =0.
\end{equation}
Whereas, the next layer pressure $p_p^2$ is taken to be of the form 
\begin{equation}  \label{def-pressure2}
p_p^2(x,y) = \int_y^\infty \Big[ [u_{e}^{0}+u_{p}^{0}] v_{px}^{0} + u_p^0
v_{ex}^1+[v_{p}^{0}+v_{e}^{1}][v_{py}^{0} + \sqrt \varepsilon v_{ez}^1] -
v_{pyy}^{0} - \varepsilon v_{ezz}^1 \Big] (x,\theta)\; d\theta.
\end{equation}
With this choice of $p_p^2$ and \eqref{Euler1-v}, the error term $R^{v,0}$
in this leading order is reduced to 
\begin{equation}  \label{est-Rv0}
R^{v,0} = \sqrt \varepsilon [v_{p}^{0}+v_{e}^{1}]v_{ez}^1 - \varepsilon
v_{ezz}^1.
\end{equation}
The set of equations \eqref{Euler1-u}, \eqref{Euler1-v}, together with the
divergence-free condition, constitutes the profile equations for the Euler
correction $[u_e^1, v_e^1, p_e^1]$, whereas equations \eqref{Prandtl1-u} and %
\eqref{Prandtl1-v} are for the divergence-free Prandtl layers $[u_p^1,
v_p^1, p_p^1]$.

\subsection{Euler correctors}

We construct the Euler corrector $[v_{e}^{1}{},u_{e}^{1},p_{e}^{1}]$ solving %
\eqref{Euler1-u}, \eqref{Euler1-v}. For sake of presentation, we drop the
superscript $1$. Writing the equation for vorticity $w_{e}=u_{ez}-v_{ex}$
and note that $w_{ex}=-\Delta v_{e}$. This leads to the following elliptic
problem for $v_{e}$: 
\begin{equation}
\begin{aligned} - u^{0}_e \Delta v_e + u_{ezz}^{0} v_e &=0,\end{aligned}
\label{eqs-ve1}
\end{equation}%
with $\Delta =\partial _{x}^{2}+\partial _{z}^{2}$, together with the
boundary conditions as in Section \ref{sec-BCs}, which we recall 
\begin{equation}
v_{e}(x,0)=-v_{p}^{0}(x,0),\qquad v_{e}(0,z)=V_{b0}(z),\qquad
v_{e}(L,z)=V_{bL}(z)  \label{E-BCs}
\end{equation}%
with the comparability assumption: $V_{b0}(0)=-v_{p}^{0}(0,0)$ and $%
V_{bL}(0)=-v_{p}^{0}(L,0)$. We need to derive higher regularity estimates
for $v_{e}$. Due to the presence of corners in the domain for $(x,z)$,
singularity could occur. To avoid this, we instead consider the following
elliptic problem: 
\begin{equation}
\begin{aligned} - u^{0}_e \Delta v_e + u_{ezz}^{0} v_e &=E_b,\end{aligned}
\label{eqs-ve1-mod}
\end{equation}%
with the same boundary conditions \eqref{E-BCs}, in which $E_{b}$ is
introduced as a boundary layer corrector. To define $E_{b}$, let us
introduce 
\begin{equation}
\begin{aligned} B(x,z) &: = (1-\frac xL)
\frac{V_{b0}(z)}{v_p^0(0,0)}v_p^0(x,0) + \frac xL \frac{
V_{bL}(z)}{v_p^0(L,0)} v^0_p(x,0) \\
&=\frac{V_{b0}(z)}{v_p^0(0,0)}v_p^0(x,0) + \frac xL\Big( \frac{
V_{bL}(z)}{v_p^0(L,0)} - \frac{V_{b0}(z)}{v_p^0(0,0)} \Big)
v_p^0(x,0).\end{aligned}  \label{def-Bw}
\end{equation}%
Here, without loss of generality, assume that $v_{p}^{0}(0,0),v_{p}^{0}(L,0)%
\not=0$. It is clear that $B(x,z)$ satisfies the boundary conditions %
\eqref{E-BCs}, thanks to the compatibility assumption at the corners. In
addition, if we assume $|\partial _{z}^{k}(V_{bL}-V_{b0})(z)|\leq CL$, it
follows that $B\in W^{k,p}$ for arbitrary $k,p$. In particular, the function 
\begin{equation*}
F_{e}(x,z):=-u_{e}^{0}\Delta B+u_{ezz}^{0}B
\end{equation*}%
is arbitrarily smooth and there holds 
\begin{equation}
\Vert \langle z\rangle ^{n}F_{e}\Vert _{W^{k,q}}\leq C,  \label{bound-Fve}
\end{equation}%
for any $n,k\geq 0$ and $q\in \lbrack 1,\infty ]$, for some constant $C$
that is independent of small $L$. Let us then introduce the function $w$
through 
\begin{equation*}
v_{e}=B+w.
\end{equation*}%
The function $w$ solves the following elliptic problem with homogenous
boundary conditions: 
\begin{equation}
\begin{aligned} - u^{0}_e \Delta w + u_{ezz}^{0} w &=E_b + F_e(x,z),\qquad
w_{\vert_{\partial \Omega}} = 0.\end{aligned}  \label{eqs-we1}
\end{equation}%
To obtain high regularity for $w$, we introduce the boundary layer
corrector: 
\begin{equation}
E_{b}(x,z):=-\chi (\frac{z}{\varepsilon })F_{e}(x,0),  \label{def-Eb}
\end{equation}%
in which $\chi (\cdot )$ is a cut-off function with support in $[0,1]$ and
with $\chi (0)=1$. It follows that 
\begin{equation}
\Vert \langle z\rangle ^{n}\partial _{z}^{k}E_{b}\Vert _{L^{q}}\leq
C\varepsilon ^{-k+\frac{1}{q}},\qquad q\geq 1,\qquad n,k\geq 0.
\label{est-Eb}
\end{equation}

Let us now derive sufficient estimates on $v_e$. We prove the following:

\begin{lemma}
\label{lem-ve} Assume that $V_{b0}$ and $V_{bL}$ are sufficiently smooth,
rapidly decaying at infinity, and satisfy $|\partial
_{z}^{k}(V_{bL}-V_{b0})(z)|\leq CL$, for $k\geq 0$, uniformly in $z$. There
exists a unique smooth solution $v_{e}$ to the elliptic problem \eqref{E-BCs}%
, \eqref{eqs-ve1-mod}, and \eqref{def-Eb}, and there holds 
\begin{equation*}
\Vert v_{e}\Vert _{\infty }+\Vert \langle z\rangle ^{n}v_{e}\Vert
_{H^{2}}\leq C_{0},\qquad \Vert \langle z\rangle ^{n}v_{e}\Vert _{H^{3}}\leq
C_{0}\varepsilon ^{-1/2},\qquad \Vert \langle z\rangle ^{n}v_{e}\Vert
_{H^{4}}\leq C_{0}\varepsilon ^{-3/2}
\end{equation*}%
for $n\geq 0$ and for some constant $C_{0}$ that depends on the given
boundary data, and but does not depend on $L$, when $L$ is small. In
addition, 
\begin{equation*}
\Vert \langle z\rangle ^{n}v_{e}\Vert _{W^{2,q}}\leq C(L),\qquad \Vert
\langle z\rangle ^{n}v_{e}\Vert _{W^{3,q}}\leq C(L)\varepsilon
^{-1+1/q},\qquad \Vert \langle z\rangle ^{n}v_{e}\Vert _{W^{4,q}}\leq
C(L)\varepsilon ^{-2+1/q},
\end{equation*}%
for $n\geq 0$, $q\in (1,\infty )$, and for some constant $C(L)$ that could
depend on small $L$. Here, we note that the integration in the above $\Vert
\cdot \Vert _{W^{k,q}}$ norms is taken with respect to $(x,z)$ in $%
[0,L]\times \mathbb{R}_{+}$.
\end{lemma}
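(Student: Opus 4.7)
The plan is to reduce the problem to one with homogeneous Dirichlet data via $v_e = B + w$, so that $w$ solves \eqref{eqs-we1} with source $E_b + F_e$ and vanishing boundary trace on $\partial\Omega$. Dividing by $u_e^0$ (which is uniformly positive on the region where the source is supported, and bounded below on bounded $z$-strips by hypothesis on the Euler shear), the equation becomes
\begin{equation*}
-\Delta w + \frac{u^0_{ezz}}{u_e^0} w = \frac{E_b + F_e}{u_e^0}, \qquad w|_{\partial\Omega} = 0.
\end{equation*}
The zeroth-order coefficient is sign-indefinite, so coercivity is not automatic; here I would invoke the same positivity identity as in \eqref{positivity-intro}, which gives
\begin{equation*}
\int_\Omega \Bigl( -\Delta w + \tfrac{u^0_{ezz}}{u_e^0} w \Bigr) w = \int_\Omega |\partial_x w|^2 + \int_\Omega (u_e^0)^2 \Bigl|\partial_z \tfrac{w}{u_e^0}\Bigr|^2 \ge 0.
\end{equation*}
This furnishes Lax--Milgram coercivity on $H^1_0(\Omega)$ and yields existence, uniqueness, and the basic $H^1$ bound $\|w\|_{H^1} \lesssim \|E_b + F_e\|_{L^2} \lesssim 1$, using \eqref{bound-Fve} and $\|E_b\|_{L^2} \lesssim \varepsilon^{1/2}$ from \eqref{est-Eb}.

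Next I would upgrade to $H^2$ by standard elliptic regularity on the corner domain $[0,L]\times\mathbb{R}_+$: since the boundary data of $w$ vanish on all four sides and the zeroth-order compatibility at the corners is ensured by the hypotheses $V_{b0}(0)=-v_p^0(0,0)$, $V_{bL}(0)=-v_p^0(L,0)$, the $H^2$ estimate $\|w\|_{H^2} \lesssim \|E_b + F_e\|_{L^2} \lesssim 1$ holds. The $L^\infty$ bound then follows from Sobolev embedding together with a small additional $H^{2+\delta}$ argument, or directly from De Giorgi/Moser iteration. Weighted versions are obtained by multiplying the equation by $\langle z\rangle^{2n} w$ (or by $\langle z\rangle^{2n}$ times the differentiated quantity) and absorbing the commutator $[\langle z\rangle^n, \Delta]$ as lower-order terms, using the rapid $z$-decay of $F_e$ from \eqref{bound-Fve} and the compact $z$-support of $E_b$. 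For the higher-regularity estimates, differentiate the equation in $x$ and $z$, apply the $H^2$ machinery to the differentiated equation, and plug in $\|\partial_z E_b\|_{L^2}\lesssim\varepsilon^{-1/2}$ and $\|\partial_z^2 E_b\|_{L^2}\lesssim\varepsilon^{-3/2}$ from \eqref{est-Eb}; this yields exactly the $H^3$ and $H^4$ scalings claimed. The $W^{k,q}$ bounds follow by the same argument using Calderón--Zygmund theory in place of $L^2$-regularity, together with $\|\partial_z^k E_b\|_{L^q} \lesssim \varepsilon^{-k+1/q}$.

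The main obstacle I foresee is twofold. First, corner regularity: generic $H^3$ or $H^4$ estimates on rectangular domains require compatibility of boundary data to higher order, and no such compatibility is assumed beyond the zeroth-order matching at the corners. This is precisely why the artificial boundary layer $E_b$ in \eqref{def-Eb} is introduced: $E_b$ is chosen so that $\partial_z^k E_b|_{z=0}$ cancels $\partial_z^k F_e|_{z=0}$ at the boundary to the needed order, restoring the compatibility, at the price of a $z$-scale of order $\varepsilon$. The delicate point is to verify that this cancellation actually gives a solution smooth across the corners with the stated $\varepsilon$-loss and no $L$-dependence in the leading constants. Second, the $L$-uniformity of the $H^k$ (and $L^\infty$) constants: this relies on the hypothesis $|\partial_z^k(V_{bL}-V_{b0})(z)|\lesssim L$, which guarantees that $B$ and hence $F_e$ are $O(L)$ (or at least bounded independently of any Poincaré-type blow-up as $L\to 0$), so that the energy estimate absorbs any $L$-dependence into the constant $C_0$. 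The $C(L)$ constants in the $W^{k,q}$ estimates, by contrast, come from Calderón--Zygmund bounds on the rectangle and are allowed to depend on $L$.
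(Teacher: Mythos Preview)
Your overall strategy---reduce to $w$ via $v_e=B+w$, use the positivity identity \eqref{positivity-intro} for the $H^1$ bound, then bootstrap---matches the paper exactly, and your identification of the $\varepsilon$-scaling coming from $\|\partial_z^k E_b\|_{L^q}$ is correct.  But there is a concrete misidentification of how $E_b$ works, and it matters for the higher-regularity step.

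You write that ``$E_b$ is chosen so that $\partial_z^k E_b|_{z=0}$ cancels $\partial_z^k F_e|_{z=0}$ to the needed order.''  This is false: from \eqref{def-Eb}, $E_b(x,z)=-\chi(z/\varepsilon)F_e(x,0)$, so only the zeroth-order trace matches, $E_b(x,0)+F_e(x,0)=0$, while $\partial_z E_b(x,0)=-\varepsilon^{-1}\chi'(0)F_e(x,0)$ has nothing to do with $\partial_z F_e(x,0)$.  The actual mechanism is that this single cancellation forces the right-hand side $G_e:=(E_b+F_e-u^0_{ezz}w)/u_e^0$ of $-\Delta w=G_e$ to vanish on $\{z=0\}$, and hence $w_{zz}|_{z=0}=0$.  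This is what the paper exploits: (i) for $H^2$, multiply $-\Delta w=G_e$ by $w_z$ and integrate; the boundary terms vanish because $w_{zz}(x,0)=0$ and $w_z(0,z)=w_z(L,z)=0$, yielding $\|\nabla w_z\|_{L^2}\lesssim\|G_e\|_{L^2}$ with no corner-regularity black box; (ii) for $H^3$, $H^4$, observe that $w_{zz}$ itself satisfies a homogeneous Dirichlet problem $-\Delta w_{zz}=\partial_z^2 G_e$, so the same energy argument applies, and the missing pure-$x$ derivative $w_{xxx}$ is recovered algebraically from $w_{xxx}=-w_{zzx}-\partial_x G_e$; (iii) for $W^{k,q}$, the vanishing of $G_e$ on $\{z=0\}$ makes its odd extension to $z<0$ lie in $W^{2,q}([0,L]\times\mathbb{R})$, so standard Calder\'on--Zygmund on the strip applies.

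Your proposed route---``standard elliptic regularity on the corner domain'' for $H^2$, then ``differentiate in $x$ and $z$ and apply the $H^2$ machinery''---does not go through as stated, because differentiating in $z$ destroys the Dirichlet condition at $z=0$ unless you first know $w_{zz}|_{z=0}=0$, and generic $H^3$/$H^4$ corner estimates require higher-order compatibility you do not have.  The paper sidesteps all of this with the single observation $G_e|_{z=0}=0$.  A minor point: the $L^\infty$ bound in the paper is obtained by the elementary one-line estimate $|w(x,z)|\le\sqrt{x}\,\|w_x\|_{L^2}^{1/2}\|w_{xz}\|_{L^2}^{1/2}\le C_0\sqrt{L}$, not via $H^{2+\delta}$ embedding or De~Giorgi iteration; this is also what gives the $L$-uniformity.
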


\begin{proof}
We write $v_{e}=B+w$ with the smooth $B$
defined as in \eqref{def-Bw}. It suffices to derive estimates on $w$,
solving \eqref{eqs-we1}. We first perform the basic $L^{2}$ estimate.
Multiplying the equation by $w/u_{e}^{0}$ and using the zero boundary
conditions yield 
\begin{equation*}
\iint \Big(-\Delta ww+\frac{u_{ezz}^{0}}{u_{e}^{0}}|w|^{2}\Big)=\iint \Big(%
|\nabla w|^{2}+\frac{u_{ezz}^{0}}{u_{e}^{0}}|w|^{2}\Big)=\iint \frac{%
w(E_{b}+F_{e})}{u_{e}^{0}}.
\end{equation*}%
Thanks to the crucial positivity estimate (see \eqref{positivity-intro} and %
\eqref{low-vy} with $u_{e}^{0}$), we have 
\begin{equation*}
\int \Big(|w_{z}|^{2}+\frac{u_{ezz}^{0}}{u_{e}^{0}}|w|^{2}\Big)=\int
|u_{e}^{0}|^{2}\left\{ \frac{w}{u_{e}^{0}}\right\} _{z}^{2}\geq \theta
_{0}\int |w_{z}|^{2},
\end{equation*}%
in which we have used $|w/u_{0}^{e}|\leq \sqrt{z}\Vert
(w/u_{e}^{0})_{z}\Vert _{L^{2}(\mathbb{R}_{+})}$and the fast decay property
of $u_{ez}^{0}$ to obtain the lower bound, for some constant $\theta _{0}$
independent of $L$. In addition, we estimate 
\begin{equation*}
\iint \frac{w(E_{b}+F_{e})}{u_{e}^{0}}\leq \iint \frac{\sqrt{x}|E_{b}+F_{e}|%
}{u_{e}^{0}}\Vert w_{x}\Vert _{L^{2}(0,L)}\leq C\Vert E_{b}+F_{e}\Vert
_{L^{2}}\Vert \nabla w\Vert _{L^{2}}\leq C\Vert \nabla w\Vert _{L^{2}}.
\end{equation*}%
Putting the above estimates into the energy estimate, together with a use of
the standard Young's inequality, we get 
\begin{equation}
\Vert w\Vert _{H^{1}}\leq C,  \label{H1-ve1}
\end{equation}%
for some constant $C$ that is independent of small $L$, in which the $L^{2}$
norm of $w$ is bounded by the Poincare's inequality.

Next, to derive high order energy estimates, we write the equation as 
\begin{equation}  \label{eqs-wG}
- \Delta w = G_e, \qquad G_e: = \frac{1}{u_e^0} \Big( E_b + F_e - u_{ezz}^0 w%
\Big).
\end{equation}
Clearly, $\| G_e\|_{L^2} \le C$. In addition, since $E_b(x,0) + F_e(x,0) = 0$%
, we have $G_e = 0$ and hence $w = w_{zz} =0$ on the boundary $z=0$. We thus
have the following $H^2$ energy estimate by multiplying the elliptic
equation by $w_z$: 
\begin{equation*}
\iint |\nabla w_z|^2 + \int_0^Lw_{zz} w_z(x,0) + \int_0^\infty w_{zx}w_z\Big|%
_{x=0}^{x=L} = \iint (G_e)_z w_z = - \iint G_e w_{zz} \le \| G_e\|_{L^2}
\|w_{zz}\|_{L^2} .
\end{equation*}
Since $w_{zz}(x,0) = 0$ and $w_z(0,z) = w_z(L,z) =0$, the boundary terms
vanish. Hence, together with a use of the Young's inequality, we have
obtained the uniform bound $\| w_z\|_{H^1} \le C$. Using the equation to
estimate $w_{xx}$ in term of the rest, we thus obtain the full $H^2$ bound
of the solution $w$, and hence of $v_e$, uniformly in small $L$. In
addition, since $w =0$ on the boundary, we have 
\begin{equation*}
\begin{aligned}
 |w(x,z)| & \le \int_0^x |w_x(\theta, z)|\; d\theta \le \int_0^x \Big( \int_0^z |w_x w_{xz}| (\theta, \eta)\; d\eta \Big)^{1/2} \; d\theta 
 \\
 &\le \sqrt x \| w_x\|_{L^2}^{1/2} \| w_{xz}\|_{L^2}^{1/2} \le C_0 \sqrt L, 
 \end{aligned}
\end{equation*}
thanks to the $H^2$ bound on $w$. This proves the uniform boundedness of $v_e
$.

As for the weighted estimates, we consider the elliptic problem for $\langle
z \rangle^n w$, with $n\ge 1$, which solves 
\begin{equation*}
- \Delta (\langle z \rangle^n w) = \frac{\langle z \rangle^n }{u_e^0} \Big( %
E_b + F_e - u_{ezz}^0 w\Big)  - w \partial_z^2 \langle z \rangle^n - 2 w_z
\partial_z \langle z \rangle^n
\end{equation*}
the homogenous boundary conditions. By induction, $\langle z \rangle^{n-1}w$
is uniformly bounded in $H^2$, and hence the right-hand side of the above
elliptic problem is uniformly bounded in $H^1$. The same proof given just
above for the unweighted norm yields $\| \langle z \rangle^n w\|_{H^2} \le C$%
, for all $n \ge 1$.

Next, we derive higher regularity estimates for $w$. We recall that from %
\eqref{bound-Fve} and \eqref{est-Eb}, there holds 
\begin{equation*}
\|\langle z \rangle^n \partial_z^k (E_b+F_e)\|_{L^q} \le C (1+\varepsilon
^{-k+\frac 1q}), \qquad q\ge 1, \qquad n,k\ge 0.
\end{equation*}
Let us now consider the elliptic problems for $w_z$ and $w_{zz}$: 
\begin{equation*}
-\Delta w_{z} = \partial_z \Big[ \frac{1}{u_e^0} \Big( F_e + E_b - u_{ezz}^0
w\Big) \Big], \qquad {w_z}_{\vert_{x=0,L}} = {w_{zz}}_{\vert_{z=0}} =0
\end{equation*}
and 
\begin{equation*}
-\Delta w_{zz} = \partial_z^2 \Big[ \frac{1}{u_e^0} \Big( F_e + E_b -
u_{ezz}^0 w\Big) \Big], \qquad {w_{zz}}_{\vert_{\partial\Omega}} =0.
\end{equation*}
Here, we note that $w_{zz}=0$ on the boundary, precisely due to the layer
corrector $E_b$ and the equation \eqref{eqs-wG}. Next, note that the source
term in the above equations has its $L^2$ norm bounded by $%
C\varepsilon^{-1/2}$ and $C \varepsilon^{-3/2}$, respectively. Again, the
above $H^2$ energy estimates then give 
\begin{equation*}
\|\langle z \rangle^n \partial_z^kw\|_{H^2} \le C \varepsilon^{-k+1/2},
\qquad k=1,2, \quad n\ge 0.
\end{equation*}
We now estimate $w$ in $H^3$ and $H^4$ norms. Indeed, thanks to the $H^2$
bound on $w_z, w_{zz}$, it remains to estimate $w_{xxx}$ in $L^2$ and $H^1$,
respectively. Thanks to \eqref{eqs-wG}, we may write 
\begin{equation*}
w_{xxx} = - w_{zzx} + \Delta w_x = - w_{zzx} - \partial_x \Big[ \frac{1}{%
u_e^0} \Big( F_e + E_b - u_{ezz}^0 w\Big) \Big].
\end{equation*}
This yields at once the desired weighted $L^2$ and $H^1$ estimates on $%
w_{xxx}$, and hence the full weighted $H^3$ and $H^4$ estimates on $w$.

Finally, the $W^{k,q}$ estimates follow simply from the standard elliptic
theory in $[0,L]\times \RR$, when we make the odd extension to $z<0$
for (\ref{eqs-wG}). We note that the boundary layer construction (\ref%
{def-Eb}) ensures that the odd extension of $G_{e}\in W^{2,q}([0,L]\times 
\RR)$. This completes the proof of the lemma.
\end{proof}

\subsubsection{Euler profiles}

\label{sec-defE1}

We now construct the Euler corrector $[u_{e}^{1},v_{e}^{1},p_{e}^{1}]$ that
is used in the boundary-layer expansion \eqref{expansion}. We take $%
v_{e}^{1}=v_{e}$, where $v_{e}$ solves the modified elliptic problem %
\eqref{eqs-ve1-mod}, with an extra source $E_{b}$. By a view of %
\eqref{Euler1-v} and the divergence-free condition, we take 
\begin{equation}
\begin{aligned}u^1_{e} &:= u_b^1 (z)-\int_{0}^{x}v^1_{ez}(\xi ,z)d\xi , \\
p^1_{e} &:=p_b-\int_{0}^{x}[-u_{e}^{0}v^1_{ez}+v_{e}^1u_{ez}^{0}](\xi
,0)d\xi -\int_{0}^{z}[u_{e}^{0}v^1_{ex}](x,\theta )d\theta , \end{aligned}
\label{def-Epressure}
\end{equation}%
for any constant $p_{b}$. Without loss of generality, we take $p_{b}=0$.
Clearly, by definition and the uniform $H^{2}$ estimates on $v_{e}^{1}$
(Lemma \ref{lem-ve}), we have 
\begin{equation}
\Vert u_{e}^{1}\Vert _{\infty }+\Vert \langle z\rangle ^{n}u_{e}^{1}\Vert
_{H^{1}}\leq C_{0},\qquad n\geq 0.  \label{est-ue1}
\end{equation}%
By construction, $[u_{e}^{1},v_{e}^{1},p_{e}^{1}]$ solves \eqref{Euler1-v},
the divergence-free condition, and instead of \eqref{Euler1-u}, the equation 
\begin{equation}
u_{e}^{0}u_{ex}^{1}+v_{e}^{1}u_{ez}^{0}+p_{ex}^{1}=-\int_{z}^{\infty
}E_{b}(x,\theta )\;d\theta .  \label{Euler1-u-mod}
\end{equation}%
As compared to \eqref{Euler1-u}, this contributes a new error term into %
\eqref{est-Ru1}, which is now defined as 
\begin{equation}
\begin{aligned} R^{u,1}&= \sqrt{\varepsilon }[v_{p}^{0}+v_{e}^{1}]u_{ez}^{1}
- \varepsilon u_{ezz}^1 + \int_z^\infty E_b(x,\theta) \; d\theta.
\end{aligned}  \label{def-Ru1-mod}
\end{equation}

To give an estimate on the error term, we first note that throughout the
paper we work with the coordinates $(x,y)$, whereas the Euler flows are
evaluated at $(x,z)=(x,\sqrt \varepsilon y)$. Thanks to Corollary \ref%
{cor-Prandtl0}, the boundedness of $v_e$ and \eqref{est-ue1}, we have 
\begin{equation*}
\| \sqrt{\varepsilon }[v_{p}^{0}+v_{e}^{1}]u_{ez}^{1}\|_{L^2} \le \sqrt
\varepsilon \| v_p^0 + v_e^1\|_\infty \| u_{ez}^1(\sqrt \varepsilon
\cdot)\|_{L^2} \le C\varepsilon^{1/4} \| u_{ez}^1(\cdot)\|_{L^2} .
\end{equation*}
Similarly, by definition and the estimates from Lemma \ref{lem-ve}, we have 
\begin{equation*}
\varepsilon \| u_{ezz}^1(\sqrt \varepsilon \cdot)\|_{L^2} \le
\varepsilon^{3/4} \| u^1_{bz}\|_{L^2} + C\varepsilon^{3/4}\|\langle z
\rangle^n v^1_{ezzz} \|_{L^2} \le C\varepsilon^{1/4},
\end{equation*}
and by the estimate \eqref{est-Eb}, 
\begin{equation*}
\Big\| \int_{\sqrt \varepsilon y}^\infty E_b(x,\theta) \; d\theta \Big\|%
_{L^2} \le C \varepsilon^{-1/4} \| \langle z \rangle^n E_b(\cdot)
\|_{L^2}\le C \varepsilon^{1/4}.
\end{equation*}
Hence, we obtain the uniform error estimate: 
\begin{equation}
\begin{aligned} \| R^{u,1}\|_{L^2} \le C \varepsilon^{1/4}. \end{aligned}
\label{est-Ru1-mod}
\end{equation}

Similarly, we give an estimate on $R^{v,0}$ defined as in \eqref{est-Rv0}.
Thanks to the boundedness of $v_p^0$ and $v_e^1$, and the estimates on $%
v_e^1 $ (Lemma \ref{lem-ve}), we get 
\begin{equation}  \label{bound-Rv0}
\begin{aligned} \| R^{v,0}\|_{L^2} &\le \sqrt \veps
\|v_{p}^{0}+v_{e}^{1}\|_{L^\infty } \|v_{ez}^1 \|_{L^2} + \varepsilon \|
v_{ezz}^1\|_{L^2} \le C \varepsilon^{1/4}. \end{aligned}
\end{equation}

Finally, we estimate $E^0$ defined as in \eqref{e0}. Using the fact that $%
u_p^0$ is rapidly decaying at infinity and $v^1_{ezz}$ is in $L^2$, we
obtain 
\begin{equation}  \label{est-E0}
\| E^{0}\|_{L^2} \le \varepsilon \| \langle y \rangle u_{px}^{0}\|_{L^2} \|
u_{ezz}^{0}\|_{L^\infty} +\varepsilon \| \langle y \rangle
u_{py}^{0}\|_{L^2} \| v_{ezz}^{1}\|_{L^2} \le C \varepsilon^{3/4}.
\end{equation}

\subsection{Prandtl correctors}

\label{sec-1Prandtl}

In this subsection, we shall construct Prandtl layer $[u_p^1, v_p^1, p_p^1]$%
, solving \eqref{Prandtl1-u}, \eqref{Prandtl1-v}. For convenience, let us
denote 
\begin{equation*}
u^{0}\equiv u_{e}^{0}(\sqrt{\varepsilon }y)+u_{p}^{0}.
\end{equation*}%
After rearranging terms, the equation \eqref{Prandtl1-u} for the $%
\sqrt\varepsilon$-order Prandtl corrector $[u_{p}{},u_{p},p_{p}]$, after
dropping the superscript $1$, becomes 
\begin{equation}  \label{prandtl1u}
\begin{aligned} u^{0}u_{px} &+ u_{p}u_{x}^{0} +v_{p}u^{0}_y +
[v_{p}^{0}+v_e^1]u_{py} -u_{pyy} \\&= - u_{ez}^{0}[yu_{px}^{0}+v_{p}^{0}]-
yv_{ez}^{1}u_{py}^{0} - u_{e}^{1}u_{px}^{0} -u_{p}^{0}u_{ex}^{1} - p_{px}
\\&=:F_p \end{aligned}
\end{equation}
in which we note that the source term $F_p$ includes the unknown pressure $%
p_{p}$. Thanks to \eqref{Prandtl1-v}, $p_p = p_p(x)$ and hence, by
evaluating the equation \eqref{prandtl1u} at $y = \infty$, we get $p_{px} =0$%
. We shall solve \eqref{prandtl1u} together with the divergence-free
condition 
\begin{equation*}
u_{px} + v_{py} =0
\end{equation*}
and the boundary conditions: 
\begin{eqnarray*}
u_p(0,y) = \bar u_1(y), \qquad u_{p}(x,0) = - u_{e}^{1}(x,0) , \qquad
v_{p}(x,0) = 0,\qquad \lim_{y\to \infty} u_p(x,y) = 0.
\end{eqnarray*}

\subsubsection{Construction of Prandtl layers}

There is a natural energy estimate associated with the linearized Prandtl
equation \eqref{prandtl1u}, yielding bound on $u_{py}$ in term of $v_p$. The
difficulty is in controlling the unknown $v_p$. Our construction starts with
the crucial positivity estimate (\ref{positivity-intro}). Indeed, we
introduce the inner product 
\begin{equation*}
\lbrack \lbrack v,w]]\equiv \int_{\mathbb{R}_+}\Big[ v_y w_y+\frac{u^0_{yy}}{%
u^0}vw\Big] \; dy.
\end{equation*}
Note that by \eqref{positivity-intro} the quantity $[[v,v]]$ yields a bound
on $\| v_y\|_{L^2(\mathbb{R}_+)}^2$. We therefore shall re-write the
equation in term of $v_p$, putting $u_p$ in the source term. Precisely,
taking $y$-derivative of \eqref{prandtl1u} and using the divergence-free
condition yield 
\begin{equation*}
\begin{aligned} - u^{0}v_{pyy} &+v_{p}u^{0}_{yy} + u_{p}u_{xy}^{0} +
[v_{p}^{0}+v_e^1]u_{pyy} + \sqrt \veps v^1_{ez}u_{py} -u_{pyyy} = F_{py},
\end{aligned}
\end{equation*}
or equivalently, in a view of the inner product, 
\begin{equation}  \label{prandtl1u-dy}
\begin{aligned} - &v_{pyy} +\frac{u^{0}_{yy}}{u^0} v_p -\partial_y^2
\Big(\frac{u_{py}}{u^0} \Big) \\&= \frac{1}{u^0}\Big( F_{py} -
u_{p}u_{xy}^{0} - [v_{p}^{0}+v_e^1]u_{pyy} - \sqrt \veps v^1_{ez}u_{py}
\Big) - 2 u_{pyy} \Big( \frac{1}{u^0}\Big)_y - u_{py}\Big(
\frac{1}{u^0}\Big)_{yy} \\&=: G_p.\end{aligned}
\end{equation}
Furthermore, taking $x$-derivative of \eqref{prandtl1u-dy} yields 
\begin{equation}  \label{prandtl1u-dxy}
\begin{aligned} - v_{pxyy} +\frac{u^{0}_{yy}}{u^0} v_{px} +
\Big(\frac{v_{pyy}}{u^0} \Big)_{yy} &= G_{px} - \Big(
\frac{u^{0}_{yy}}{u^0}\Big)_x v_p + \Big(u_{py} \Big(\frac{1}{u^0}
\Big)_x\Big)_{yy} \end{aligned}
\end{equation}
with $G_p$ defined as in \eqref{prandtl1u-dy}. We shall solve the problem %
\eqref{prandtl1u-dy}-\eqref{prandtl1u-dxy} for $v_p$, with $u_{px} +
v_{py}=0 $, and the boundary conditions: 
\begin{equation}
v_{p} (x,0)=0,\qquad v_{py}(x,0)=u_{ex}^{1}(x,0).  \label{prandtl1bc}
\end{equation}%
We prove the following:

\begin{lemma}
\label{lem-1stP} There exists a unique smooth divergence-free solution $%
[u_{p},v_{p}]$ solving \eqref{prandtl1u} with initial condition $u_{p}(0,y)=%
\bar{u}_{1}(y)$ and the boundary conditions \eqref{prandtl1bc}. Furthermore,
there hold 
\begin{equation}
\begin{aligned} \| [u_p, v_p]\|_{L^\infty} + \sup_{0\leq x\leq L} \| \langle
y \rangle^n v_{pyy} \|_{L^2(\RR_+)} + \| \langle y\rangle ^{n}v_{pxy}
\|_{L^2} &\lesssim C(L,\kappa)\varepsilon^{-\kappa}, \end{aligned}
\label{prandtl1bound}
\end{equation}%
for arbitrary small $\kappa $, and high regularity estimates 
\begin{equation}
\begin{aligned} \sup_{0\leq x\leq L} \| \langle y \rangle^n v_{pxyy}
\|_{L^2(\RR_+)} + \| \langle y\rangle ^{n}v_{pxxy} \|_{L^2} \lesssim
C(L)\varepsilon^{- 1} , \end{aligned}  \label{prandtl1bound-high}
\end{equation}%
uniformly in small $\varepsilon ,L$, in which the bounds depend only on the
constructed profiles $[u^{0},v_{p}^{0}]$, the given boundary data, and small 
$L$.
\end{lemma}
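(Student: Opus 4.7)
The construction will be based on the reformulation \eqref{prandtl1u-dy} in terms of $v_p$ alone, together with the divergence-free relation $u_{px}=-v_{py}$ to recover $u_p$ from the initial data $\bar u_1$. The plan is to solve the scalar (elliptic-in-$y$, parabolic-in-$x$) problem \eqref{prandtl1u-dy} for $v_p$ by an iteration scheme: given $(u_p^{(k)},v_p^{(k)})$, freeze the transport coefficients and the parts of $G_p$ that depend on the unknowns, solve the linear problem for $v_p^{(k+1)}$ with boundary data $v_p(x,0)=0$ and $v_{py}(x,0)=u_{ex}^{1}(x,0)$, and then define $u_p^{(k+1)}(x,y)=\bar u_1(y)-\int_0^x v_{py}^{(k+1)}(\xi,y)\,d\xi$. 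Contraction, and hence existence and uniqueness, will follow for $L$ small from the a priori estimates below; the linear step is classical elliptic-parabolic theory once the positivity is in place.

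The core a priori estimate uses the inner product $[[\cdot,\cdot]]$ introduced before the lemma. Testing \eqref{prandtl1u-dy} against $\langle y\rangle^n v_p$ and integrating in $y\in\RR_+$, the left-hand side reduces, after integration by parts and use of the positivity identity \eqref{positivity-intro} applied with $u_s=u^0$, to a quantity controlling $\|\langle y\rangle^{n/2}v_{py}\|_{L^2(\RR_+)}^2$, up to a boundary term at $y=0$ coming from $v_{py}(x,0)=u_{ex}^{1}(x,0)$, which is harmless thanks to Lemma~\ref{lem-ve}. On the right-hand side, $G_p$ splits into Euler-data terms that are controlled by the estimates of Section~\ref{sec-defE1} and Corollary~\ref{cor-Prandtl0}, terms involving $u_p$ and $u_{py}$ that can be re-expressed via $u_{px}=-v_{py}$ and the fundamental theorem of calculus in $x$, and the awkward term $[v_p^0+v_e^1]u_{pyy}$ which, after integration by parts, is bounded by a small fraction of the left-hand side plus lower-order pieces. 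Integrating in $x\in[0,L]$ and applying Gronwall with $L$ small closes the basic $L^2$-in-$y$ bound on $v_{py}$ uniformly in $\veps$.

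The higher-order and weighted estimates will follow by differentiating \eqref{prandtl1u-dy} in $x$ and testing against $\langle y\rangle^n v_{px}$, which yields control of $v_{pxy}$, while \eqref{prandtl1u-dxy} provides, through elliptic regularity in $y$ with $x$ as a parameter, a bound on $v_{pyy}$ in $L^2_x L^2_y$ and on $\sup_x \|\langle y\rangle^n v_{pyy}\|_{L^2(\RR_+)}$. The $L^\infty$ bounds on $u_p$ and $v_p$ in \eqref{prandtl1bound} come from one-dimensional Sobolev embedding in $y$ combined with the boundary traces $u_p(x,0)=-u_e^1(x,0)$ and $v_p(x,0)=0$; the anisotropy of the embedding and the need to absorb a logarithmic-type loss in $v_{pyy}$ produce the factor $\veps^{-\kappa}$ for arbitrarily small $\kappa>0$. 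The higher-regularity estimate \eqref{prandtl1bound-high} then follows from a further $x$-differentiation of \eqref{prandtl1u-dxy} combined with the $\veps$-dependent bounds on the Euler correctors $v_e^1$, $u_e^1$ supplied by Lemma~\ref{lem-ve}.

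The main obstacle I expect is the source $G_p$, specifically the terms $[v_p^0+v_e^1]u_{pyy}$ and $\sqrt\veps\, v_{ez}^{1}u_{py}$. The former is precisely the reason we work in the $v_p$-formulation, so that second normal derivatives of $u_p$ appear only linearly and can, via $u_{pxy}=-v_{pyy}$, be routed into the positivity-compatible quadratic form; the latter, together with the mild decay of $v_{ez}^{1}$ evaluated at $\sqrt\veps y$ and the weighted $H^3,H^4$ bounds of size $\veps^{-1/2},\veps^{-3/2}$ from Lemma~\ref{lem-ve}, is what forces the $\veps^{-1}$ scaling in \eqref{prandtl1bound-high}. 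A secondary technical point is justifying the boundary integrations by parts at $x=0,L$ and at $y=0$ when differentiating in $x$, which will rely on the compatibility of the prescribed data at the corners of $[0,L]\times\RR_+$ and on the construction of the Euler data already carried out.
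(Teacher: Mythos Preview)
Your plan has the right overall architecture (work in the $v_p$-formulation, use the positivity of $[[\cdot,\cdot]]$, close by iteration for small $L$), but the core energy step as you describe it does not close, and the $\varepsilon$-losses are misattributed.

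\medskip
\textbf{The test function is wrong.} Pairing \eqref{prandtl1u-dy} with $v_p$ does not yield a usable estimate. The left-hand side of \eqref{prandtl1u-dy} carries the viscous term $-\partial_y^2(u_{py}/u^0)$; after two integrations by parts this becomes $-\int (u_{py}/u^0)\,v_{pyy}$, a cross term with no sign and involving $v_{pyy}$, which you do not yet control. There is also no $x$-evolution: you obtain an algebraic relation at each fixed $x$, not a differential inequality, so ``Gronwall in $x$'' has nothing to act on. The paper's device is to pass first to the $x$-differentiated equation \eqref{prandtl1u-dxy}, which replaces $u_{py}$ by $u_{pxy}=-v_{pyy}$ and turns the viscous term into $(v_{pyy}/u^0)_{yy}$; one then tests against $v_{px}$. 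This produces
\[
[[v_{px},v_{px}]]\;+\;\tfrac12\,\frac{d}{dx}\int \frac{|v_{pyy}|^2}{u^0}\;=\;\text{(sources)},
\]
so the positivity gives the dissipation $\|v_{pxy}\|_{L^2(\RR_+)}^2$ and the evolved energy is $\sup_x\|v_{pyy}\|_{L^2(\RR_+)}^2$ --- exactly the two quantities in \eqref{prandtl1bound}. The paper packages this as a stand-alone linear lemma (Lemma~\ref{lem-step1}, proved by Galerkin approximation on $[0,N]$), then runs the contraction in the norm $\sup_x\|v_{pyy}\|^2+\|v_{pxy}\|_{L^2}^2$ after subtracting off $y\chi(y)u^1_{ex}(x,0)$ to homogenize the boundary data.

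\medskip
\textbf{The $\varepsilon$-losses come from the Euler boundary trace, not from embeddings.} The factor $\varepsilon^{-\kappa}$ in \eqref{prandtl1bound} is not a Sobolev/logarithmic artifact; it enters through the source term $u^1_{exx}(x,0)$ that appears once you homogenize $v_{py}(x,0)=u^1_{ex}(x,0)$. One has $\|u^1_{exx}(\cdot,0)\|_{L^2}^2\le \|v^1_{exz}\|_{L^{q'}}\|v^1_{exzz}\|_{L^q}\le C(L,q)\,\varepsilon^{-1+1/q}$ by Lemma~\ref{lem-ve}, and sending $q\to 1$ gives the arbitrarily small loss. Likewise the $\varepsilon^{-1}$ in \eqref{prandtl1bound-high} comes from the next trace $\|u^1_{exxx}(\cdot,0)\|_{L^2}\lesssim \varepsilon^{-1}$ and from the initial value $v_{pxyy}(0,\cdot)$ (handled via Lemma~\ref{lem-initialv}); the term $\sqrt{\varepsilon}\,v^1_{ez}u_{py}$ you single out is in fact harmless at this order.
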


The proof consists of several steps. First, we express the boundary
conditions of $v_p$ in term of the given data $u_p(0,y) = \bar u_1(z)$.

\begin{lemma}
\label{lem-initialv} For smooth solutions $[u_p,v_p]$ solving %
\eqref{prandtl1u} with $u_p(0,y) = \bar u_1(y)$. Then, there hold 
\begin{equation*}
\begin{aligned} \|\langle y \rangle^n v_p(0,\cdot)\|_{\dot H^{k+1}(\RR_+)} &
\le C_0 \Big( 1 + \| \langle y \rangle^{n}\bar u_1\|_{H^{k+3}(\RR_+)}\Big)
\\ \|\langle y \rangle^n v_{px}(0,\cdot)\|_{\dot H^{k+1}(\RR_+)} & \le C_0
\Big( 1 + \| \langle y \rangle^n\bar u_1\|_{H^{k+3}(\RR_+)}+\| \langle
y\rangle^{-m}u_{exx}^1(0, \sqrt \veps \cdot)\|_{H^{k}(\RR_+)}\Big)
\end{aligned}
\end{equation*}
for $n,m, k \ge 0$, for some constant $C_0=C_0(u^0, v_p^0,
[u_e^1,v_{e}^1](0,\cdot))$ depending only on the profile $[u^0, v_p^0]$ and
given data $[u_e^1,v_{e}^1](0,\cdot)$.
\end{lemma}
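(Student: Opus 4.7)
The plan is to turn \eqref{prandtl1u}, together with its $x$-derivative, each evaluated at $x=0$, into a first-order linear ODE in $y$ for $v_p(0,y)$, respectively $v_{px}(0,y)$, and then to integrate those ODEs explicitly. At $x=0$, the values $u_p(0,y)=\bar u_1(y)$, $u_{py}(0,y)=\bar u_1'(y)$, $u_{pyy}(0,y)=\bar u_1''(y)$ are known, and the divergence-free condition gives $u_{px}(0,y)=-v_{py}(0,y)$. Substituting into \eqref{prandtl1u} at $x=0$ and rearranging yields
\begin{equation*}
-u^0(0,y)\,v_{py}(0,y) + u_y^0(0,y)\,v_p(0,y) = R_0(y),
\end{equation*}
where $R_0$ is a linear combination of $\bar u_1,\bar u_1',\bar u_1''$ with coefficients depending only on the traces at $x=0$ of the previously constructed profiles $u^0,v_p^0,u_e^1,v_e^1,p_p^1$. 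The left-hand side equals $-[u^0(0,y)]^2\,\partial_y\!\bigl(v_p(0,y)/u^0(0,y)\bigr)$, so using $v_p(0,0)=0$ and the strict positivity $u^0\ge c_0>0$ from \eqref{minw}, we get the explicit representation
\begin{equation*}
v_p(0,y) = -u^0(0,y)\int_0^y \frac{R_0(\theta)}{[u^0(0,\theta)]^2}\,d\theta.
\end{equation*}

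Differentiating this formula $k+1$ times via Leibniz, the derivatives of $u^0$ and $1/u^0$ are controlled by Corollary \ref{cor-Prandtl0} and the lower bound on $u^0$, while weighted Sobolev bounds on the traces of $u_e^1,v_e^1$ come from Lemma \ref{lem-ve}. Each term in $R_0$ involves at most two $y$-derivatives of $\bar u_1$ multiplied by a smooth factor with controlled $y$-growth, so that $\|\langle y\rangle^n R_0\|_{H^k}\lesssim C_0(1+\|\langle y\rangle^n\bar u_1\|_{H^{k+2}})$; together with the Leibniz losses this yields the first bound of the lemma.

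For the second estimate, we differentiate \eqref{prandtl1u} once in $x$, evaluate at $x=0$, and use $u_{pxx}(0,y)=-v_{pxy}(0,y)$, obtaining an ODE of the same structure
\begin{equation*}
-u^0(0,y)\,v_{pxy}(0,y) + u_y^0(0,y)\,v_{px}(0,y) = R_1(y),
\end{equation*}
with $v_{px}(0,0)=0$ coming from $v_p(x,0)=0$. The source $R_1$ now contains, besides the $x$-derivative of the terms in $R_0$, new contributions featuring $x$-derivatives of $u_p^0$ and of $v_p(0,\cdot)$ at $x=0$; the former are converted back into $y$-derivatives of $\bar u_0$ by the Prandtl equation \eqref{prandtl0} (absorbed into $C_0$), the latter are handled by the bound already proven in the first part. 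The only term that does not fit this scheme is $u_p^0(0,y)\,u_{exx}^1(0,\sqrt\veps y)$, arising from $\partial_x(u_p^0\,u_{ex}^1)|_{x=0}$; its contribution is estimated by pairing the rapid decay in $y$ of $u_p^0$ from Corollary \ref{cor-Prandtl0} with a polynomial weight $\langle y\rangle^{-m}$ on $u_{exx}^1$, producing the third term in the stated bound. Solving the ODE explicitly in the same way as in the first part and differentiating $k+1$ times gives the second estimate.

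The main technical obstacle is the weighted Sobolev book-keeping: at each Leibniz expansion one has to match the target weight $\langle y\rangle^n$ against the decay or polynomial growth of each factor, which is possible thanks to the strict positivity $u^0\ge c_0>0$ and the fast decay of $u_p^0$ and its derivatives supplied by Corollary \ref{cor-Prandtl0}. The subtle point is the $u_{exx}^1$ term, which lives on the Euler scale $z=\sqrt\veps y$ and cannot be controlled in $y$ uniformly in $\veps$; it can be tolerated only because it always multiplies the Schwartz-in-$y$ factor $u_p^0$, which absorbs any polynomial growth of $u_{exx}^1$ encoded by the negative weight $\langle y\rangle^{-m}$.
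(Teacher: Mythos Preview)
Your approach is correct and is essentially the same as the paper's: both exploit that \eqref{prandtl1u} at $x=0$, combined with $u_{px}=-v_{py}$, yields a first-order linear ODE in $y$ for $v_p(0,\cdot)$ of the form $-(u^0)^2\partial_y\bigl(v_p/u^0\bigr)=R_0$, which integrates explicitly thanks to $u^0\ge c_0>0$ from \eqref{minw}. The only cosmetic difference is that the paper packages this via the stream function $\psi=\int_0^y u_p$ and the auxiliary quantity $w=u^0\psi_y-u^0_y\psi=(u^0)^2\partial_y(\psi/u^0)$, computing $w_x$ from \eqref{prandtl1u} and recovering $v_p=-\psi_x$ from the formula $\psi=u^0\int_0^y w/(u^0)^2$, whereas you write the ODE for $v_p$ directly; after unwinding, your $R_0$ agrees with the paper's $w_x$ up to harmless terms involving $u^0_x=u^0_{px}$.
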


\begin{proof} Define the stream function $\psi = \int_0^y u_p \; dy$. Then, $u_p = \psi_y$ and $v_p = - \psi_x$. We introduce the quantity $w = u_0 \psi_y - u^0_y \psi$. Using \eqref{prandtl1u}, we get 
\begin{equation}\label{def-wx} w_x =   - u^0_{xy} \psi  - [v_{p}^{0}+v_e^1]u_{py}   + u_{pyy}  +F_p,\end{equation}
for $F_p$ defined as in \eqref{prandtl1u}. Hence, the boundary values of $w$ and $w_x$ can be computed directly from the given boundary data $\bar u_1(y)$, $[u_p^0, v_p^0]$, and $F_p(0,y)$. Precisely, for $k\ge 0$, we get 
$$ \| \partial_y^k w(0,\cdot)\|_{L^2} \le C(u^0, v_p^0) \| \bar u_1\|_{H^k(\RR_+)}, \qquad  \| \partial_y^k w_x(0,\cdot)\|_{L^2} \le C(u^0, v_p^0) \| \bar u_1\|_{H^{k+2}(\RR_+)} + \| \partial_y^kF_p(0, \cdot)\|_{L^2(\RR_+)}$$
for some constant $C(u^0, v_p^0)$ that depends on the high regularity norms of $u^0, v_p^0$; see the estimates on $[u_p^0, v_p^0]$ in Corollary \ref{cor-Prandtl0}. By definition of $F_p$ in \eqref{prandtl1u} and the fact that $u_e^1, v_{e}^1$, and hence $\partial_z^k [u_e^1,v_{e}^1, u_{ex}^1]$, are all given on the boundary $x=0$, we get 
$$ \| \partial_y^kF_p(0, \cdot)\|_{L^2(\RR_+)} \le C(u^0, v_p^0,[u_e^1,v_e^1](0,\cdot)) ,$$
in which the $y$-decay factor comes directly from the decay property of $u_p^0$ and $C(u^0, v_p^0,[u_e^1,v_e^1](0,\cdot))$ depends on high regularity norms of $u^0, v_p^0,$ and $ [u_e^1,v_{e}^1](0,\cdot)$. 

Next, from the definition of $w$, we can write  
$$\psi = u^0 \int_0^y \frac{w(x,\theta)}{\{u^0\}^2} \;d\theta$$
and so we have 
$$\|\langle y \rangle^n \partial_y^{k+1} v_p(0,\cdot)\|_{L^2(\RR_+)} = \|\langle y\rangle^n\partial_y^{k+1} \psi_x(0,\cdot)\|_{L^2(\RR_+)} \le C ( \|w(0,\cdot)\|_{H^{k}(\RR_+)} + \| w_x(0,\cdot)\|_{H^{k}(\RR_+)}).$$
This proves the claimed estimate for $v_p(0,\cdot)$ in $\dot H^{k+1}(\RR_+)$. Next, as for $v_x$ estimate, we differentiate \eqref{def-wx} with respect to $x$ and get 
$$ \| w_{xx}(0,\cdot)\|_{H^k(\RR_+)}\le C(u^0, v_p^0) \Big( \|\langle y \rangle^{-2} \psi_x(0,\cdot)\|_{H^k(\RR_+)}  + \| v_p(0,\cdot)\|_{H^{k+3}(\RR_+)} \Big)  +\|F_{px}(0,\cdot)\|_{H^k(\RR_+)}.$$
Again by definition of $F_p$ in \eqref{prandtl1u}, we have  
$$ \| F_{px}(0,y)\|_{H^k(\RR_+)} \le C(u^0, v_p^0, [u_e^1,v_{e}^1](0,\cdot)) \Big( 1 + \| \langle y\rangle^{-m}u_{exx}^1(0, \cdot)\|_{H^k(\RR_+)}\Big) $$
in which $C(u^0, v_p^0, v_{e}^1(0,\cdot))$ depends on high regularity norms of $u^0, v_p^0,$ and $ v_{e}^1(0,\cdot)$. Similarly, $v_{pyy}(0,y)$ can be written in term of $u_{p}(0,y), v_p(0,y), F_{py}(0,y)$, according to \eqref{prandtl1u-dy}. This gives 
$$ \| v_p(0,\cdot)\|_{H^{k+3}(\RR_+)} \le C(u^0, v_p^0,[u_e^1, v_{e}^1](0,\cdot)) \Big( 1 + \| v_p(0,\cdot)\|_{\dot H^{k+1}(\RR_+)} \Big) $$ 
This yields estimates on $v_{px}$ on the boundary as claimed. 
\end{proof}

%$$- u_{ez}^{0}[yu_{px}^{0}+v_{p}^{0}]- yv_{ez}^{1}u_{py}^{0} - u_{e}^{1}u_{px}^{0} -u_{p}^{0}u_{ex}^{1}  -  p_{px} $$

\begin{lemma}
\label{lem-step1} There exists a positive number $L>0$ so that for each $N$
large, the fourth order elliptic equation: 
\begin{equation}  \label{eqsv-step1}
-v_{yyx}+\frac{u^{0}_{yy}}{u^{0}}v_x +\left\{ \frac{ v_{yy}}{u^{0}}%
\right\}_{yy} =f_{y}+g
\end{equation}
on $[0,L]\times [0,N]$ has a unique solution satisfying initial condition $%
v(0,y) = \bar v_0(y)$ and boundary conditions: $[v,v_y] = 0$ at $y=0,N$,
with sources $f, g$ in weighted $L^2$ spaces. Furthermore, there holds 
\begin{equation}  \label{step1}
\begin{aligned} \sup_{0\leq x\leq L}& \| \langle y \rangle^n \partial
_{x}^{j}v_{yy} \|_{L^2([0,N])} + \| \langle y\rangle ^{n}\partial
_{x}^{j}v_{xy} \|_{L^2}\\ &\le C\sum_{k=0}^1\Big( \|\langle y
\rangle^n\partial_x^k v_{yy}\|_{L^2(\{ x=0\})} + \|\langle y \rangle ^n
\partial_x^k f \|_{L^2}+\|\langle y\rangle^{n+\frac 32}\partial_x^kg\|_{L^2}
\Big) \end{aligned}
\end{equation}
for $j = 0,1$, for any $n\ge 0$, as long as the right-hand side is finite.
\end{lemma}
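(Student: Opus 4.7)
The plan is to prove Lemma~\ref{lem-step1} by the energy method, exploiting the key positivity estimate \eqref{positivity-intro}. A crucial structural observation is that since $v = v_y = 0$ on both boundaries $y = 0, N$ for all $x \in [0,L]$, differentiating in $x$ gives $v_x = v_{xy} = 0$ on these boundaries as well, so every boundary term arising from $y$-integration by parts will vanish.

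For existence, I would set up a Galerkin approximation in the $y$-variable on $[0,N]$ using a basis $\{\phi_k\}$ satisfying $\phi_k = \phi_k' = 0$ at $y = 0, N$. This reduces \eqref{eqsv-step1} to a first-order ODE system in $x$ for the coefficients, which is locally solvable; the a priori bound below extends the solution to $[0,L]$, and uniqueness follows by applying the same estimate to the difference of two solutions.

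The heart of the argument is the a priori estimate. I would test \eqref{eqsv-step1} not against $v$ (which only yields control of $\sup_x\|v_y\|$ and $\|v_{yy}\|_{L^2}$) but against $\langle y\rangle^{2n} v_x$. Integration by parts in $y$ turns the first two left-hand-side terms into $\int v_{xy}^2 \langle y\rangle^{2n}\,dy + \int \tfrac{u^0_{yy}}{u^0} v_x^2 \langle y\rangle^{2n}\,dy$ plus weight commutators; this is the weighted analogue of $[[v_x,v_x]]$ and by \eqref{positivity-intro} is bounded below by $\theta_0 \|\langle y\rangle^n v_{xy}\|_{L^2(\mathbb{R}_+)}^2$, with the commutators absorbed via Hardy's inequality (using $v_x|_{y=0}=0$). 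The third term $\{(v_{yy}/u^0)_{yy}\} v_x \langle y\rangle^{2n}$, after two integrations by parts in $y$ (all boundary contributions vanishing), yields the perfect $x$-derivative
\begin{equation*}
\frac{1}{2}\frac{d}{dx}\int \frac{v_{yy}^2}{u^0} \langle y\rangle^{2n}\,dy + (\text{commutators involving } \partial_x u^0 \text{ and } \partial_y \langle y\rangle^{2n}).
\end{equation*}
On the right-hand side, $\int v_x f_y \langle y\rangle^{2n}\,dy$ is rewritten via integration by parts and handled by Cauchy--Schwarz against the positive quadratic term $\|\langle y\rangle^n v_{xy}\|^2$, while $\int v_x g \langle y\rangle^{2n}\,dy$ is controlled by combining Hardy's inequality with Cauchy--Schwarz --- this is the source of the extra $\langle y\rangle^{3/2}$ factor on $g$ in the stated bound. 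Integrating in $x \in [0,L]$ and applying Gronwall's inequality gives the $j = 0$ case of \eqref{step1}, with the initial term $\|\langle y\rangle^n v_{yy}(0,\cdot)\|_{L^2}$ appearing naturally from the $\tfrac{d}{dx}$ at $x=0$.

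For the $j=1$ estimate, I would differentiate \eqref{eqsv-step1} once in $x$; the resulting equation for $v_x$ has the same principal structure plus commutators from $x$-derivatives falling on $u^0$, controlled by Corollary~\ref{cor-Prandtl0}. Testing against $\langle y\rangle^{2n} v_{xx}$ and repeating the previous scheme yields the analogous bound with one extra $x$-derivative on the data and sources. I expect the main obstacle to be bookkeeping rather than any single conceptual step: verifying that all weight commutators can be absorbed into the positive quadratic piece $\|\langle y\rangle^n v_{xy}\|^2$ uniformly in $N$, and that the Hardy-type bound used for the $g$-term is sharp enough to match the declared weight $\langle y\rangle^{n+3/2}$.
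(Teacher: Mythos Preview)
Your proposal is correct and follows essentially the same route as the paper: a Galerkin approximation in $y$ reduced to an ODE system in $x$, tested against $v_x$ (or its weighted version) to produce the positive quadratic form $[[v_x,v_x]]$ from \eqref{positivity-intro} together with the perfect derivative $\tfrac12\tfrac{d}{dx}\int (v_{yy})^2/u^0$, followed by Gronwall and then an $x$-differentiation for $j=1$. The only cosmetic difference is that the paper chooses the Galerkin basis to be $[[\cdot,\cdot]]$-orthogonal and first derives the unweighted estimate before remarking that the weighted case is identical, whereas you work with the weight from the outset.
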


\begin{proof} Let us choose an orthogonal 
basis $\{e^{i}(y)\}_{i=1}^{\infty }$ in $H^{2}([0,N])$ with $[e^{i},e_{y}^{i}]=0$ at $y = 0,N$, for all $i\ge 1$. The orthogonality is obtained with respect
to the $[[ \cdot ]]$ inner product: 
\begin{equation*}
\lbrack \lbrack e^{i},e^{j}]]=\delta _{ij}.
\end{equation*}%
Such an orthogonal basis exists, since $[[\cdot ]]$ is equivalent to the usual inner product in $H^1([0,N])$.  Then, we introduce the weak formulation of \eqref{eqsv-step1}: 
\begin{equation*}
\lbrack \lbrack v_{x},e^{i}]]+\int \frac{v_{yy}e_{yy}^{i}}{u^{0}}\; dy =\int ( - fe^i_y + g e^i)\; dy 
\end{equation*}%
for all $e^{i}(y)$, $i \ge 1$. Next, for each fixed $k$, we construct an approximate solution in Span$\{e^{i}(y)\}_{i=1}^{k}$ defined as
\begin{equation*}
v^{k}(x,y): =\sum_{i=1}^{k}a^{i}(x)e_{i}(y),
\end{equation*}%
for each $x\in [0,L]$. Then, $v^k(x.\cdot)$ solves  
\begin{equation}
\lbrack \lbrack v_{x}^{k},e^{i}]]+\int \frac{v_{yy}^{k}e_{yy}^{i}}{u^{0}} = \int ( - fe^i_y + g e^i)\; dy   \label{kapproximate}
\end{equation}%
which by orthogonality yields a system of ODE equations:
\begin{equation*}
a_{x}^{i}+\sum_{i=1}^{k}a^{j}\int \frac{e_{yy}^{j}e_{yy}^{i}}{u^{0}}=\int ( - fe^i_y + g e^i)\; dy .
\end{equation*}%
Since $f,g \in L^2(\RR_+)$, the ODE system has the unique smooth solution $a^k$ and hence, $v^k$ is defined uniquely and smooth.  Multiplying \eqref{kapproximate} by $a_x^i$ and taking the sum over $i$, we get  
\begin{equation*}
\lbrack \lbrack v_{x}^{k},v_{x}^{k}]]+\int \frac{v_{yy}^{k}v_{yyx}^{k}}{u^{0}%
} = \int ( - f v_{xy}^k + gv_x^k )\; dy
\end{equation*}%
which is equivalent to
\begin{equation*}
\lbrack \lbrack v_{x}^{k},v_{x}^{k}]]+\frac{1}{2}\frac{d}{dx}\int \frac{%
\{v_{yy}^{k}\}^{2}}{u^{0}}= \int ( - f v_{xy}^k + gv_x^k  - \frac{1}{2} \left\{ \frac{1}{u^{0}}\right\} _{x}\{v_{yy}^{k}\}^{2} \Big)\; dy.
\end{equation*}%
By the positivity estimate \eqref{positivity-intro} and \eqref{low-vy} with $u^0$, we note that $[[v_{x}^{k},v_{x}^{k}]]\geq \theta_0 \|v_{xy}^{k}\|_{L^2(0,N)}^{2}$. The
standard Gronwall inequality then yields 
\begin{equation}\label{Prd1-vk}
\sup_{x\in [0,L]} \int|v^k_{yy}|^2 +\|v_{xy}^{k} \|_{L^2}^{2}  \le C \Big( \|v_{yy}^k\|_{L^2(\{ x=0\})}^2 +   \|f \|_{L^2}^{2}+\|\langle y\rangle^{3/2}g\|_{L^2}^{2} \Big)
\end{equation}%
in which we have used the inequality $\| gv_x^k\|_{L^1} \le C \| v_{xy}^k\|_{L^2}\| \langle y \rangle^{3/2} g\|_{L^2}.$ Taking limit as $k\rightarrow \infty ,$ we obtain the solution to \eqref{eqsv-step1} at once. This also proves the claim \eqref{step1} when $j = n=0$. 

Next, we shall derive high regularity estimates. We take $x$-derivative of (\ref{kapproximate}) to get 
\begin{eqnarray*}
\lbrack \lbrack v_{xx}^{k},e^{i}]]+\int \frac{v_{xyy}^{k}e_{yy}^{i}}{u^{0}}
&=&-\int
\Big( \left\{ \frac{u^0_{yy}}{u^0}\right\}_{x} v_x^k e^i + \left\{ \frac{1}{u^{0}}\right\} _{x}v_{yy}^{k}e_{yy}^{i}\Big) +
\int ( - f_xe^i_y + g_x e^i)\; dy.
\end{eqnarray*}%
Recall now that $v_{xx}^{i}=\sum_{i=1}^{k}a_{xx}^{i}e^{i}$, which is a smooth function, since $a^i, e^i$ are both smooth. Using $v_{xx}^i$ as a test function, we then get \begin{eqnarray*}
&&[[v_{xx}^{k},v_{xx}^{k}]]+\frac{1}{2}\frac{d}{dx}\int \frac{
\{v_{yyx}^{k}\}^2}{u^{0}} \\
&=&-\frac{1}{2}\int \left\{ \frac{1}{u^{0}}\right\}
_{x}\{v_{xyy}^{k}\}^{2}-\int
\Big( \left\{ \frac{u^0_{yy}}{u^0}\right\}_{x} v_x^k v^k_{xx} + \left\{ \frac{1}{u^{0}}\right\} _{x}v_{yy}^{k}v^k_{xxyy}\Big) +
\int ( - f_xv^k_{xxy} + g_x v^k_{xx})\; dy.
\end{eqnarray*}%
The Gronwall inequality, together with \eqref{Prd1-vk}, yields the claim \eqref{step1} for the unweighted estimates, upon integrating by parts in $y$ the third term on the right.  Almost identically, we may now insert the weight function $w(y) = \langle y\rangle ^{2n}$ and take inner products against $w(y)v_{x}^{k}$ and $w(y)v_{xx}^{k}$, respectively in the above energy estimates to obtain
the weighted estimates as claimed. We avoid repeating the details. 
\end{proof}

\begin{proof}[Proof of Lemma \ref{lem-1stP}] To apply the previous step, we first take care of the non-zero boundary conditions \eqref{prandtl1bc}. Indeed, let us take $\chi(\cdot) $ to be a cutoff function near $0$
with $\chi (0)=1$, and introduce 
$$ \bar v = v_p(x,y) - y \chi(y) u^1_{ex}(x,0).$$
Hence, $\bar v=\bar v_{y}=0$ at both $y=0$ and $y=N.$ In addition, from \eqref{prandtl1u-dxy}, $\bar v$ solves 
\begin{equation}\label{eqs-barv}
\begin{aligned}
- \bar v_{xyy} +\frac{u^{0}_{yy}}{u^0} \bar v_{x}   + \Big(\frac{\bar v_{yy}}{u^0} \Big)_{yy}  
&= G_{px}  - \Big( \frac{u^{0}_{yy}}{u^0}\Big)_x v_p + \Big(u_{py}  \Big(\frac{1}{u^0} \Big)_x\Big)_{yy}  
 - (y \chi)_{yy} u^1_{exx}(x,0) \\&\quad+\frac{u^{0}_{yy}}{u^0} y \chi u^1_{exx}(x,0)  + \Big(\frac{(y\chi )_{yy}}{u^0} \Big)_{yy} u^1_{ex}(x,0)  
\\
&=: f_y + g,
\end{aligned}\end{equation}
in which $G_p$ is defined as in \eqref{prandtl1u-dy}. Explicitly, we have defined 
$$ \begin{aligned}
f&:= u_{pyy}  \Big(\frac{1}{u^0} \Big)_x + u_{py}  \Big(\frac{1}{u^0} \Big)_{xy} +  \Big( \frac{ v_{p}^{0}+v_e^1}{u^0}\Big) v_{pyy} + 2 v_{pyy} \Big( \frac{1}{u^0}\Big)_{y} + \frac{v_{ex}^1}{u^0} u_{py} 
\\
g &: = \Big(\frac{1}{u^0}\Big)_x \Big( F_{py} -  u_{p}u_{xy}^{0} - [v_{p}^{0}+v_e^1]u_{pyy} - \sqrt \veps v_{ez}^1 u_{py}\Big) -\Big( \frac{v_{ex}^1}{u^0}\Big)_y u_{py} \\&\quad +
\frac{1}{u^0}\Big( F_{pxy} -  u_{px}u_{xy}^{0} - u_{p}u_{xxy}^{0} - v_{px}^{0} u_{pyy} - \sqrt \veps v_{exz}^1 u_{py} - \sqrt \veps v_{ez}^1 u_{pxy} \Big)
\\&\quad - \Big( \frac{ v_{p}^{0}+v_e^1}{u^0}\Big)_y v_{pyy} - 2 v_{pyy} \Big( \frac{1}{u^0}\Big)_{yy} - 2 u_{pyy} \Big( \frac{1}{u^0}\Big)_{xy} -  u_{pxy}\Big( \frac{1}{u^0}\Big)_{yy}-  u_{py}\Big( \frac{1}{u^0}\Big)_{xyy}
\\&\quad 
  - \Big( \frac{u^{0}_{yy}}{u^0}\Big)_x v_p  - (y \chi)_{yy} u^1_{exx}(x,0) +\frac{u^{0}_{yy}}{u^0} y \chi u^1_{exx}(x,0)  + \Big(\frac{(y\chi )_{yy}}{u^0} \Big)_{yy} u^1_{ex}(x,0)  
\end{aligned}$$
with $$F_p = - u_{ez}^{0}[yu_{px}^{0}+v_{p}^{0}]- yv_{ez}^{1}u_{py}^{0} - u_{e}^{1}u_{px}^{0} -u_{p}^{0}u_{ex}^{1} .
$$
Here, we note that the divergence-free condition is imposed: $u_{px} = -v_{py} = -\bar v_y + (y\chi)_y u_{ex}^1(x,0)$. We construct the unique solution $\bar v$ to the above problem, and hence the solution $v_p$ to (\ref{prandtl1u-dxy}) via a contraction mapping theorem. We shall work with the norm: 
$$|||\bar v|||\equiv \sup_{0\leq x\leq L}\int  |\bar v_{yy}|^{2} + \iint | \bar v_{xy}|^2 \; dxdy .$$ 
Lemma \ref{lem-step1} (or precisely, the estimate \eqref{Prd1-vk}) yields 
\begin{equation}\label{step1-re}
\begin{aligned}
||| \bar v|||  \le C \Big( \|\bar v_{yy}\|_{L^2(\{ x=0\})}^2 +   \|f \|_{L^2}^{2}+\|\langle y\rangle^{3/2}g\|_{L^2}^{2} \Big)
\end{aligned}
\end{equation}
with $(f,g)$ defined as in \eqref{eqs-barv}. Recall that $ \bar v = v_p(x,y) + y \chi(y) v^1_{ez}(x,0)$ with $v_{e}^1$ given on the boundary $x=0$. Hence, $\bar v_{yy}(0,y)$ can be estimates as follows, thanks to Lemma \ref{lem-initialv},
$$  \|v_{pyy}(0,\cdot)\|_{L^2(\RR_+)}   \le C(u^0, v_p^0, [u_e^1,v_{e}^1](0,\cdot)) \Big( 1 + \| \bar u_1\|_{H^{4}(\RR_+)}\Big).
$$
The uniform bound on $\|\bar v_{yy}\|_{L^2(\{ x=0\})}^2$ follows.

Next, let us give bounds on $f,g$. For instance, $|\bar v|  \le y^{3/2} \| v_{yy}\|_{L^2(\RR_+)}$, $|\bar v_y|  \le y^{1/2} \| v_{yy}\|_{L^2(\RR_+)}$, and thus 
$$ \begin{aligned}
\iint \langle y \rangle ^{-n} |\bar v|^2  & \le C \sup_{x} \| v_{yy}\|_{L^2(\RR_+)}^2 \iint \langle y\rangle^{-n+3} \le C L ||| \bar v|||,
\\
 \iint \langle y \rangle ^{-n} |\bar v_y|^2   &\le C \sup_{x} \| v_{yy}\|_{L^2(\RR_+)}^2 \iint \langle y\rangle^{-n+1} \le C L ||| \bar v|||,
 \end{aligned}$$ 
for some large $n$. Such a spatial decay $\langle y \rangle^{-n}$ is produced by the rapid decay property of $u_p^0$. Similarly, we have 
$$ \iint \langle y \rangle^{-n} |u_p|^2 \le C \iint \langle y \rangle^{-n} \Big[ |\bar u_1|^2 + L \int_0^L |u_{px}|^2 \Big] \le CL \Big( \| \bar u_1 \|^2 _{L^2(\RR_+)} + \| u^1_{ex}(x,0)\|_{L^2(0,L)}^2 + ||| \bar v|||\Big) , $$
using the fact that $u_{px} = -v_{py} = -\bar v_y + (y\chi)_y u_{ex}^1(x,0)$. As for $u_{py}$, we use $|u_{py}| \le  |\bar u_{1y}| + \sqrt L\| u_{pxy}\|_{L^2(0,L)}$. Next, we bound $\langle y \rangle^{-n} u_{pyy}$, which by observation the decaying factor $\langle y \rangle^{-n}$ is always present, for large $n$, due to the decay property of $[u_p^0, v_p^0]$. To do so, we use \eqref{prandtl1u} to estimate 
$$ \iint \langle y \rangle^{-n} |u_{pyy}|^2 \le \iint \langle y\rangle^{-n} \Big |u^{0}u_{px} + u_{p}u_{x}^{0} +v_{p}u^{0}_y +[ v_{p}^{0}+ v_e^1]u_{py}   - F_p \Big|^2$$
which is again bounded by $C + C L ||| v|||$. Finally, we note that  
$$\begin{aligned}
 \| u_{exx}^1(x,0)\|_{L^2}^2  
 & \le  \iint |v_{exz}^1(x,z)v_{exzz}^1(x,z)|
\\&\le 
\| v_{exz}^1 \|_{L^{q'}} \| v_{exzz}^1\|_{L^q} \le C(L,q) \veps^{-1+1/q},
\end{aligned}$$
for arbitrary pair $(q,q')$ so that $1/q+1/q' = 1$; here, we take $q\to 1$. 

% yields the estimate \eqref{prandtl1bound}.  

Taking $L$ sufficiently small in the above estimates and in \eqref{step1-re} yields a uniform bound on $||| \bar v|||$: 
$$ |||\bar v||| \le  C(L,\kappa) \veps^{-\kappa}, $$
for arbitrarily small $\kappa>0$. Since the equation is linear in $\bar v$, this assures the existence of the unique solution to \eqref{eqs-barv} and hence to \eqref{prandtl1u-dxy}. In addition, the above construction can be repeated to
obtain a global solution in $x$ for any given $L$ for the existence of the Euler data. Finally, taking $N\rightarrow \infty $, we obtain the solution to \eqref{prandtl1u} over $[0,\infty]\times \RR_+$. 
The claimed weighted estimates follow similarly, using the rapid decay property of $u_p^0$. 

In addition, the boundedness of $v_p$ follows by the calculation:
\begin{equation} \label{vbound}
\begin{aligned}
| v(x,y)|^{2}&\leq \int_{0}^{y} | v  v_y|dy \le \int_0^\infty \langle y\rangle^{-n} | v| \; dy + \int_0^\infty \langle y \rangle^{n} | v_y|^2 \; dy
\\ &\leq C \sup_{x} \|  v_{yy}\|^2_{L^2(\RR_+)} +  \int_{0}^{\infty}\langle y\rangle ^{n}| v_{y}(0,y)|^{2}dy+ C\iint \langle
y\rangle ^{n}| v_{xy}|^{2}\; dxdy, 
\end{aligned}\end{equation}%
which is bounded thanks to the previous bound on $||| v|||$ and the uniform estimate of $v_p$ on the boundary $x=0$. Similarly, boundedness of $u_p$ follows from the definition 
$$ u_p(x,y) = \bar u_1(y) + \int_0^x v_{py} \; dx $$
in which 
$$ \int_0^L |v_{py}|^2 \; dx \le \iint \langle y \rangle ^{-n} |v_{py}|^2 + \iint \langle y \rangle^{n} |v_{pyy}|^2 $$ 
which is again bounded by $||| v|||$.

To complete the proof of the lemma, we are now concerned with the higher regularity estimate. Again, applying Lemma \ref{lem-step1} to the equation \eqref{eqs-barv} yields 
$$
\begin{aligned}
\sup_{0\leq x\leq L}&  \| \langle y \rangle^n v_{xyy} \|_{L^2(\RR_+)}  + \| \langle y\rangle ^{n}v_{xxy} \|_{L^2}\\
&\le C\sum_{k=0}^1\Big( \|\langle y \rangle^n\partial_x^k v_{yy}\|_{L^2(\{ x=0\})} +   \|\langle y \rangle ^n \partial_x^k f \|_{L^2}+\|\langle y\rangle^{n+\frac 32}\partial_x^kg\|_{L^2} \Big).
\end{aligned}
$$
Let us give bounds on the boundary term on $x=0$. Recall that $ v = v_p(x,y) - y \chi(y) u^1_{ex}(x,0)$. Lemma \ref{lem-initialv} gives
$$  \|\langle y \rangle^n v_{pxyy}(0,\cdot)\|_{L^2(\RR_+)}   \le C_0 \Big( 1 +\| \langle y\rangle^{-m}u_{exx}^1(0, \sqrt \veps \cdot)\|_{H^{1}(\RR_+)}\Big).
$$
Using the inequality  $|Lf(0)| \le \int_0^L| [(L-x)f(x)]_x|\; dx \le \sqrt L \| f\|_{L^2} + L^{3/2}\|f_x\|_{L^2} $, we have  
$$ \begin{aligned}
\| u_{exx}^1(0, \sqrt \veps \cdot)\|_{L^2(\RR_+)} 
&\le C \veps^{-1/4}  \| v_{exz}^1(0,\cdot)\|_{L^2(\RR_+)}
\\&\le C \veps^{-1/4} L^{-1/2} \Big( \| v_{exz}^1(\cdot)\|_{L^2} + \| v_{exz}^1(\cdot)\|_{L^2}^{1/2} \| v_{exxz}^1(\cdot)\|_{L^2}^{1/2}\Big)
\\&\le C \veps^{-1/2} L^{-1/2} 
\end{aligned}$$
and 
$$ \begin{aligned}
\| u_{exxy}^1(0, \sqrt \veps \cdot)\|_{L^2(\RR_+)} 
&\le C \veps^{1/4}  \| v_{exzz}^1(0,\cdot)\|_{L^2(\RR_+)}
\\&\le C \veps^{1/4} L^{-1/2} \Big( \| v_{exzz}^1(\cdot)\|_{L^2} + \| v_{exzz}^1(\cdot)\|_{L^2}^{1/2} \|  v_{exxzz}^1(\cdot)\|_{L^2}^{1/2}\Big)
\\&\le C \veps^{-3/4} L^{-1/2} 
\end{aligned}$$
in which the estimates on $v_e^1$ from Lemma \ref{lem-ve} were used. Also, we have 
$$ \begin{aligned}
| u_{exx}^1(0,0)|^2  &\le  \| u_{exx}^1(0,\cdot)\|_{L^2(\RR_+)}  \|v_{exzz}^1(0,\cdot)\|_{L^2(\RR_+)}
\\&\le C  L^{-1/2}  \| u_{exx}^1(0,\cdot)\|_{L^2(\RR_+)} \Big( \| v_{exzz}^1(\cdot)\|_{L^2} + \|  v_{exzz}^1(\cdot)\|_{L^2}^{1/2} \| v_{exzzz}^1(\cdot)\|_{L^2}^{1/2}\Big)
\\&\le C \veps^{-3/2 } L^{-1} 
\end{aligned}$$
This proves that 
$$ \|\langle y \rangle^n v_{pxyy}(0,\cdot)\|_{L^2(\RR_+)}   \le C \veps^{- 3/4} L^{-1/2} ,$$
uniformly in small $\veps$ and $L$. Next, estimates for $f$ and $g$ are treated similarly as done above. In particular, we note 
$$\begin{aligned}
 \| u_{exxx}^1(x,0)\|_{L^2}^2  
 & \le  \iint |v_{exxz}^1(x,z)v_{exxzz}^1(x,z)|
\\&\le 
\| v_{exxz}^1 \|_{L^{2}} \| v_{exxzz}^1\|_{L^2} \le C(L) \veps^{-2},
\end{aligned}$$
which together with the previous estimates yield the estimate \eqref{prandtl1bound-high}. 

This completes the proof of the lemma. 
\end{proof}

\subsubsection{Cut-off Prandtl layers}

\label{sec-defP1} Finally, we are ready to introduce the Prandtl layers that
we shall use in the boundary layer expansion. Let us define a cutoff
function $\chi (\cdot )$ with support in $[0,1]$, and let $[u_{p},v_{p}]$ be
constructed as in the previous section. We introduce 
\begin{equation}
u_{p}^{1}=\chi (\sqrt{\varepsilon }y)u_{p}+\sqrt{\varepsilon }\chi ^{\prime
}(\sqrt{\varepsilon }y)\int_{0}^{y}u_{p}(x,s)ds,\text{ \ \ \ }v_{p}^{1}=\chi
(\sqrt{\varepsilon }y)v_{p}.  \label{u1p}
\end{equation}%
Clearly, $[u_{p}^{1},v_{p}^{1}]$ is a divergence-free vector field. By the
estimates from Lemma \ref{lem-1stP} on $[u_{p},v_{p}]$, we get 
\begin{equation*}
\begin{aligned} \Big| \sqrt{\varepsilon } \chi ^{\prime }(\sqrt{\varepsilon
}y) \int_0^y u_p (x,s) \; ds\Big| & \le \sqrt \veps y |\chi'(\sqrt \veps
y)|\| u_p\|_\infty \le C(L,\kappa) \varepsilon^{-\kappa}.\end{aligned}
\end{equation*}%
Hence, Lemma \ref{lem-1stP} now reads we have 
\begin{equation}
\begin{aligned} \| [u_p^1, v_p^1]\|_\infty + \sup_{0\leq x\leq L} \| \langle
y \rangle^n v^1_{pyy} \|_{L^2(\RR_+)} + \| \langle y\rangle ^{n}v^1_{pxy}
\|_{L^2} &\le C(L,\kappa)\varepsilon^{-\kappa} \\ \sup_{0\leq x\leq L} \|
\langle y \rangle^n v^1_{pxyy} \|_{L^2(\RR_+)} + \| \langle y\rangle
^{n}v^1_{pxxy} \|_{L^2} & \le C(L)\varepsilon^{-1} \end{aligned}
\label{key-Prandtl1}
\end{equation}%
uniformly in small $\varepsilon ,L$, and for arbitrarily small $\kappa $. In
addition, thanks to the cut-off function, we also have 
\begin{equation}
\begin{aligned} \|v_{px}^{1} \|_{L^2}^{2} &\le \iint_{\{\sqrt \veps y \le
1\}} |v_{px}^{1}|^{2}dxdy\lesssim \varepsilon^{-1/2} \iint \langle y\rangle
^{n}|v_{pxy}^1|^{2}dxdy\le C(L,\kappa)\varepsilon ^{-1/2-2\kappa} \\
\|v_{py}^{1} \|_{L^2}^{2} &\le \iint_{\{\sqrt \veps y \le 1\}}
|v_{py}^{1}|^{2}dxdy\lesssim \varepsilon^{-1/2} \iint \langle y\rangle
^{n}|v_{pyy}^1|^{2}dxdy\le C(L,\kappa)\varepsilon ^{-1/2-2\kappa} \\
\|v_{pxx}^{1}\|_{L^2}^{2} &\le \iint_{\{ \sqrt \veps y\le 1\}}
|v_{pxx}^{1}|^{2}dxdy\lesssim \varepsilon ^{-1/2}\iint \langle y\rangle
^{n}|v_{pxxy}^{1}|^{2}dxdy\le C(L) \varepsilon ^{-5/2}. \end{aligned}
\label{est-vpx}
\end{equation}

We now plug $[u_{p}^{1},v_{p}^{1}]$ into \eqref{prandtl1u}, or equivalently, %
\eqref{Prandtl1-u}. It does not solve it completely, yielding a new error
due to the above cut-off: 
\begin{equation*}
\begin{aligned} R^{u,1}_p&: = \Big( u^{0}\partial_x + u_x^0+ [ v_{p}^{0}+
v_e^1] \partial_y - \partial_y^2 \Big) (\sqrt \veps \chi' (\sqrt \veps y)
\int_0^y u_p\; ds) - 2\sqrt \veps \chi'(\sqrt \veps y) [v_{p}^{0}+ v_e^1]
u_{py} \\&\quad + u_p [v_{p}^{0}+ v_e^1] (\sqrt \veps \chi'(\sqrt \veps y) -
\varepsilon \chi''(\sqrt \veps y)) \\&\quad + (1-\chi (\sqrt \veps y)) \Big(
u_{ez}^{0}[yu_{px}^{0}+v_{p}^{0}] + yv_{ez}^{1}u_{py}^{0} +
u_{e}^{1}u_{px}^{0} + u_{p}^{0}u_{ex}^{1} \Big) \end{aligned}
\end{equation*}%
which contributes into $R^{u,1}$ in \eqref{def-Ru1-mod} (and hence $R_{%
\mathrm{app}}^{u}$). Let us give an estimate on this error term. Recall that 
$u_{p}^{0}$ is rapidly decaying at infinity, and so $u_{x}^{0}=u_{px}^{0}$
also decays rapidly. Hence, the integral $u_{x}^{0}\int_{0}^{y}u_{p}\;ds$ is
uniformly bounded by $\varepsilon ^{-\kappa }$. Together with boundedness of
the constructed Euler and Prandtl layers, we have $L^{2}$ norm of the first
three big terms involving $u_{p}$ in $R_{p}^{u,1}$ is bounded by 
\begin{equation*}
C\varepsilon ^{-\kappa }\sqrt{\varepsilon }\Vert \chi (\sqrt{\varepsilon }%
\cdot )\Vert _{L^{2}}\leq C\varepsilon ^{1/4-\kappa }.
\end{equation*}%
Now, as for the term 
\begin{equation*}
(1-\chi (\sqrt{\varepsilon }y))\Big(%
u_{ez}^{0}[yu_{px}^{0}+v_{p}^{0}]+yv_{ez}^{1}u_{py}^{0}+u_{e}^{1}u_{px}^{0}+u_{p}^{0}u_{ex}^{1}%
\Big)
\end{equation*}%
we note that the Prandtl layers $[u_{p}^{0},v_{p}^{0}]$ is rapidly decaying
in $y\rightarrow \infty $, this error term is thus bounded by $Ce^{-y}$,
which is of order $\varepsilon ^{n}$ in the region where $\sqrt{\varepsilon }%
y\geq 1$, for arbitrary large $n\geq 0$. Combining with the above estimates
proves that 
\begin{equation}
\Vert R_{p}^{u,1}\Vert _{L^{2}}\lesssim C(L,\kappa )\varepsilon ^{1/4-\kappa
}.
\label{def-Ru1p}
\end{equation}

Finally, by a view of definition of $p_p^2$ from \eqref{def-pressure2}, we
estimate $p_{px}^2$: 
\begin{equation*}
\begin{aligned} p_{px}^2 &= \int_y^\infty \partial_x\Big[
[u_{e}^{0}+u_{p}^{0}] v_{px}^{0} + u_p^0
v_{ex}^1+[v_{p}^{0}+v_{e}^{1}]v_{py}^{0} - v_{pyy}^{0} \Big] (x,\theta)\;
d\theta \\ &= \int_y^\infty \Big[ [u_{e}^{0}+u_{p}^{0}] v_{pxx}^{0} + u_p^0
v_{exx}^1+[v_{p}^{0}+v_{e}^{1}]v_{pxy}^{0} - v_{pxyy}^{0} \Big] (x,\theta)\;
d\theta , \end{aligned}
\end{equation*}
the last identity was obtained with a use of the divergence-free condition
on the vector field $[u^0_p, v^0_p]$. We estimate each term on the right.
Thanks to the boundedness of $u_e^0, [u_p^0,v_p^0]$ and $v_e^1$, and the
rapid decay property of the Prandtl layers $[u_p^0, v_p^0]$, we note that 
\begin{equation*}
\begin{aligned} \int_y^\infty [u_{e}^{0}+u_{p}^{0}] v_{pxx}^{0} &\le C
\langle y \rangle^{-n} \| u_e^0 + u_p^0\|_{L^\infty} \| \langle y\rangle^n
v_{pxx}^0\|_{L^2(\RR_+)} \\ \int_y^\infty u_p^0 v_{exx}^1 & \le C \langle y
\rangle^{-n} \| \langle y \rangle^n u_p^0\|_{L^2(\RR_+)} \|
v_{exx}^1(x,\sqrt \veps \cdot)\|_{L^2(\RR_+)} \\ \int_y^\infty
[v_p^{0}+v_e^1] v_{pxy}^{0} &\le C \langle y \rangle^{-n} \| v_p^0 +
v_e^1\|_{L^\infty} \| \langle y\rangle^n v_{pxy}^0\|_{L^2(\RR_+)} \\
\int_y^\infty v_{pxyy}^{0} &\le C \langle y \rangle^{-n} \| \langle
y\rangle^n v_{pxyy}^0\|_{L^2(\RR_+)}. \end{aligned}
\end{equation*}
Hence, taking $n\ge 2$ and using the known bounds on the profile solutions,
we immediately get 
\begin{equation}
\| p_{px}^2\|_{L^2}\lesssim \varepsilon^{-1/4}.  \label{estimate-px2}
\end{equation}

\subsection{Proof of Proposition \protect\ref{prop-approximate}}

\label{sec-app} Having constructed the Euler and Prandtl layers, we now
calculate the remaining errors in $R_{\mathrm{app}}^{u}$ and $R_{\mathrm{app}%
}^{v}$ from \eqref{tangential} and \eqref{normal}, respectively, and hence
complete the proof of Proposition \ref{prop-approximate}. To do so,
collecting errors from $R^{u,0}$ in \eqref{Ru0}, $R^{u,1}$ in %
\eqref{est-Ru1-mod}, the new error $R_{p}^{u,1}$ in \eqref{def-Ru1p}, and
the remaining $\varepsilon $-order terms in $R_{\mathrm{app}}^{u}$, we get 
\begin{equation*}
\begin{aligned} R^u_\mathrm{app} &= E_0 - \varepsilon u^0_{ezz}+ \sqrt \veps
R^{u,1}+ \sqrt \veps R_p^{u,1} + \varepsilon \Big[
[u_{e}^{1}+u_{p}^{1}]\partial_x +v_{p}^{1}\partial _{y} \Big ]
[u_{e}^{1}+u_{p}^{1}] \\&\qquad +\varepsilon p_{px}^2 - \varepsilon
\partial_x^2 \{u_{p}^{0}+\sqrt{\varepsilon }[u_{e}^{1}+u_{p}^{1}] \}
\end{aligned}
\end{equation*}%
in which reading the estimates \eqref{est-Ru1-mod}, \eqref{est-E0}, %
\eqref{def-Ru1p}, \eqref{estimate-px2} and using the fact that $u_{ezz}^{0}$
are bounded in $L^{2}$ in the original coordinates, we immediately have 
\begin{equation*}
\Vert E_{0}-\varepsilon u_{ezz}^{0}+\sqrt{\varepsilon }R^{u,1}+\sqrt{%
\varepsilon }R_{p}^{u,1}+\varepsilon p_{px}^{2}\Vert _{L^{2}}\leq C(L,\kappa
)\varepsilon ^{3/4-\kappa }.
\end{equation*}%
Similarly, using boundedness of $u_{e}^{1},u_{p}^{1},v_{p}^{1}$ and the $%
L^{2}$ bound on the derivatives of $u_{p}^{0},u_{e}^{1},u_{p}^{1}$ and
keeping in mind that the Euler flows are evaluated at $(x,\sqrt{\varepsilon }%
y)$, we get 
\begin{equation*}
\begin{aligned} \varepsilon \| [u_{e}^{1}+u_{p}^{1}]\partial_x
[u_{e}^{1}+u_{p}^{1}] \|_{L^2} &\le \varepsilon [\|
u_{e}^{1}\|_{L^\infty}+\|u_{p}^{1}\|_{L^\infty}][\|
u_{ex}^{1}\|_{L^2}+\|u_{px}^{1} \|_{L^2}] \le C(L,\kappa)
\varepsilon^{3/4-\kappa} \\ \varepsilon \| v_{p}^{1}\partial _{y}
[u_{e}^{1}+u_{p}^{1}] \|_{L^2} &\le \varepsilon \| v_{p}^{1}\|_{L^\infty}
[\| \sqrt \veps u_{ez}^{1}\|_{L^2}+\|u_{py}^{1} \|_{L^2}] \le C(L,\kappa)
\varepsilon^{1-\kappa} \\ \varepsilon \| \partial_x^2
\{u_{p}^{0}+\sqrt{\varepsilon }[u_{e}^{1}+u_{p}^{1}] \}\|_{L^2} & \le
\varepsilon \| u_{pxx}^{0}\|_{L^2}+\varepsilon^{3/2}
[\|u_{exx}^{1}\|_{L^2}+\|u_{pxx}^{1}\|_{L^2}] \le C \varepsilon,
\end{aligned}
\end{equation*}%
in which $\kappa $ is arbitrarily small constant (we choose $\frac{\kappa }{2%
}$ in Lemma \ref{lem-1stP}). This proves that 
\begin{equation*}
\Vert R_{\mathrm{app}}^{u}\Vert _{L^{2}}\leq C(L,\kappa )\varepsilon
^{3/4-\kappa }.
\end{equation*}

Next, we calculate the error $R_{\mathrm{app}}^{v}$ from \eqref{normal}.
Simply collecting the remaining terms in $R^{v,0}$ (see \eqref{est-Rv0}) and
all terms with a factor $\sqrt{\varepsilon }$ or small, we get 
\begin{equation*}
\begin{aligned} R^v_\mathrm{app}& = R^{v,0} + \sqrt \veps \Big[
\{u_{e}^{0}+u_{p}^{0}+\sqrt{\varepsilon }[u_{e}^{1}+u_{p}^{1}]\}\partial
_{x} +\{v_{p}^{0}+v_{e}^{1}+\sqrt{\varepsilon }v_{p}^{1}\}\partial _{y}
\Big] v_{p}^{1} \\ &\quad + \sqrt \veps \Big[ [u_{e}^{1}+u_{p}^{1}]\partial
_{x} +v_{p}^{1}\partial _{y} \Big] [v_{p}^{0}+v_{e}^{1} ] - \sqrt \veps
v_{pyy}^1- \varepsilon \partial_x^2 \{v_{p}^{0}+v_{e}^{1}+\sqrt{\varepsilon
}v_{p}^{1}\}. \end{aligned}
\end{equation*}%
By \eqref{bound-Rv0}, we have $\Vert R^{v,0}\Vert _{L^{2}}\leq C\varepsilon
^{1/4}$. We now estimate the remaining terms one by one in $R_{\mathrm{app}%
}^{v}$. Similarly as above, using the boundedness of all profile solutions,
we get 
\begin{equation*}
\begin{aligned} \sqrt \veps \Big\| \Big[
\{u_{e}^{0}+u_{p}^{0}+\sqrt{\varepsilon }[u_{e}^{1}+u_{p}^{1}]\}\partial
_{x} +\{v_{p}^{0}+v_{e}^{1}+\sqrt{\varepsilon }v_{p}^{1}\}\partial _{y}
\Big] v_{p}^{1}\Big\|_{L^2} &\le C \sqrt \veps \| [u_e^0, u_p^0, v_p^0,
u_e^1, v_e^1]\|_{L^{\infty}} \|\nabla v_p^1\|_{L^2} \\& \le C(L,\kappa)
\varepsilon^{1/4-\kappa} \end{aligned}
\end{equation*}%
upon recalling the bound $\Vert v_{px}^{1}\Vert _{L^{2}}\leq C(L)\varepsilon
^{-1/4-\kappa }$. Next, we have 
\begin{equation*}
\begin{aligned} \sqrt \veps \Big\| \Big[ [u_{e}^{1}+u_{p}^{1}]\partial _{x}
+v_{p}^{1}\partial _{y} \Big] [v_{p}^{0}+v_{e}^{1} ] \Big\|_{L^2} &\le C
\sqrt \veps \| [u_e^1, u_p^1, v_p^1]\|_{L^\infty} \Big( \| v_{px}^0+
v_{ex}^1\|_{L^2} + \| v_{py}^0 + \sqrt \veps v_{ez}^1\|_{L^2}\Big) \\ &\le
C(L,\kappa) \varepsilon^{1/4-\kappa}, \end{aligned}
\end{equation*}%
with noting that $\Vert v_{ex}^{1}(\cdot ,\sqrt{\varepsilon }\cdot )\Vert
_{L^{2}}\leq C\varepsilon ^{-1/4}$. Finally, it is clear that 
\begin{equation*}
\Vert \sqrt{\varepsilon }v_{pyy}^{1}-\varepsilon \partial
_{x}^{2}\{v_{p}^{0}+v_{e}^{1}+\sqrt{\varepsilon }v_{p}^{1}\}\Vert
_{L^{2}}\leq C(L)\varepsilon ^{1/4-\kappa },
\end{equation*}%
since $\Vert \lbrack v_{pyy}^{1},v_{pxx}^{0}]\Vert _{L^{2}}$ is uniformly
bounded by $C(L,\kappa )\varepsilon ^{-\kappa }$, $\Vert v_{exx}^{1}\Vert
_{L^{2}}\leq C\varepsilon ^{-1/4}$, and $\Vert v_{pxx}^{1}\Vert _{L^{2}}\leq
C(L)\varepsilon ^{-5/4}$, as summarized in Section \ref{sec-defP1}. Putting
these together into $R_{\mathrm{app}}^{v}$ and using the fact that $%
\varepsilon \ll L$, we have obtained 
\begin{equation*}
\Vert R_{\mathrm{app}}^{v}\Vert _{L^{2}}\leq C(L,\kappa )\varepsilon
^{1/4-\kappa }.
\end{equation*}

This completes the proof of Proposition \ref{prop-approximate}.

\section{Linear stability estimates}

This section is devoted to prove the following crucial linear stability
estimates for the linearized equations around the constructed approximate
solutions $[u_{\mathrm{app}},v_{\mathrm{app}}]$. Recall (\ref{us}).

\begin{proposition}
\label{prop-stability} Let $[u_s,v_s]$ be the approximate solution defined
as in \eqref{us}. For any given $f,g$ in $L^2$, there exists a positive
number $L$ so that the following linear problem 
\begin{eqnarray}
u_{s}u_{x}+uu_{sx}+v_{s}u_{y}+vu_{sy}+p_{x}-\Delta _{\varepsilon }u &=&f
\label{linearu} \\
u_{s}v_{x}+uv_{sx}+v_{s}v_{y}+vv_{sy}+\frac{p_{y}}{\varepsilon }-\Delta
_{\varepsilon }v &=&g ,  \label{linearv}
\end{eqnarray}%
together with the divergence-free condition $u_{x}+v_{y}=0$ and boundary
conditions 
\begin{equation}  \label{lin-BCs}
\begin{aligned} \lbrack u,v]_{y=0} &=0\text{ (no-slip)},\text{ \ \ \ \ \ \ \
\ \ \ \ }[u,v]_{x=0}=0\text{ \ (Dirichlet)}, \\ p-2\varepsilon u_{x}
&=0,\text{ \ \ \ \ }u_{y}+\varepsilon v_{x}=0\quad \text{ \ \ at }x=L\text{
(Neumann or stress-free)}. \end{aligned}
\end{equation}
has a unique solution $[u,v,p]$ on $[0,L]\times \mathbb{R}_+$. Furthermore,
there holds 
\begin{equation}  \label{stability}
\| \nabla _{\varepsilon }u \|_{L^2}+\|\nabla _{\varepsilon }v\|_{L^2}\quad
\lesssim\quad \|f\|_{L^2}+\sqrt{\varepsilon }\|g\|_{L^2}.
\end{equation}
\end{proposition}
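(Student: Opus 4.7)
The plan is to follow the two-step strategy sketched in the introduction: a basic energy estimate that by itself is insufficient, followed by a vorticity-based positivity argument that upgrades the bound on $\nabla_\varepsilon v$ to the desired order. Existence is then obtained by Galerkin approximation using the resulting a priori bound together with the bilinear form $[[\cdot,\cdot]]$ and the framework of Lemma~\ref{lem-step1}.

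First I would test \eqref{linearu} against $u$ and \eqref{linearv} against $\varepsilon v$, then add and integrate over $\Omega$. The incompressibility $u_x+v_y=0$ together with the stress-free condition at $x=L$ in \eqref{lin-BCs} are chosen precisely so that pressure and the leading tangential-derivative boundary contributions cancel, leaving $\|\nabla_\varepsilon u\|_{L^2}^2+\varepsilon\|\nabla_\varepsilon v\|_{L^2}^2$ on the left plus harmless stretching terms from $u_{sx},v_{sy}$. The source contribution $\|f\|_{L^2}\|u\|_{L^2}+\varepsilon\|g\|_{L^2}\|v\|_{L^2}$ is absorbed by Young's inequality after Poincar\'e in $x$ (valid because $u|_{x=0}=v|_{x=0}=0$). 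The only obstacle at this stage is the convective cross-term $\iint u_{sy}\,u\,v\,dx\,dy$, which is of order one because $u_{sy}\sim u_{py}^0$ is $O(1)$ in the layer and cannot be controlled by the $\varepsilon$-weighted $\|\nabla_\varepsilon v\|_{L^2}^2$. So the basic energy estimate only delivers
\begin{equation*}
\|\nabla_\varepsilon u\|_{L^2}^2+\varepsilon\|\nabla_\varepsilon v\|_{L^2}^2\lesssim\|f\|_{L^2}^2+\varepsilon\|g\|_{L^2}^2+\Big|\iint u_{sy}uv\Big|.
\end{equation*}

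The decisive step is to upgrade $\nabla_\varepsilon v$ via vorticity. Applying $\partial_y$ to \eqref{linearu} and subtracting $\varepsilon\partial_x$ of \eqref{linearv} eliminates the pressure and yields, with $\omega=u_y-\varepsilon v_x$,
\begin{equation*}
-u_s\Delta_\varepsilon v+v_s\Delta_\varepsilon u-u\Delta_\varepsilon v_s+v\Delta_\varepsilon u_s=\Delta_\varepsilon\omega+f_y-\varepsilon g_x.
\end{equation*}
I would test this against the multiplier $v/u_s$, which is admissible because \eqref{nozero} together with the Prandtl maximum principle \eqref{minw} give $u_s\geq c_0>0$ uniformly on $\Omega$. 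The leading pairing $(-u_s\partial_{yy}v+u_{syy}v)\cdot v/u_s$ collapses, by the computation proving \eqref{positivity-intro}, to $\iint u_s^2\,|\partial_y(v/u_s)|^2$, which by \eqref{low-vy} dominates $\|v_y\|_{L^2}^2$; the analogous calculation on the $\varepsilon\partial_{xx}v$ piece produces $\varepsilon\|v_x\|_{L^2}^2$. The cross-terms $v_s\Delta_\varepsilon u$, $u\Delta_\varepsilon v_s$, $v\Delta_\varepsilon u_s$, and $\iint\Delta_\varepsilon\omega\cdot v/u_s$ are absorbed using $\|\nabla_\varepsilon u\|_{L^2}$ from the first step together with the decay and smallness estimates for $v_s$ and $\Delta_\varepsilon[u_s,v_s]$ provided by the profile construction in Section~\ref{sec-profiles}; the source terms $f_y$ and $\varepsilon g_x$ are integrated by parts onto $v/u_s$ to trade derivatives for $v_y$ and $\sqrt{\varepsilon}\,v_x$, contributing $\|f\|_{L^2}^2+\varepsilon\|g\|_{L^2}^2$ on the right. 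Combining with the basic energy estimate and taking $L$ small, the offending $\iint u_{sy}uv$ is finally absorbed via Poincar\'e in $x$, closing \eqref{stability}.

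The hard part will be the rigorous justification of each integration by parts with the multiplier $v/u_s$ on the corner domain $\Omega=[0,L]\times\mathbb{R}_+$. The mixed Dirichlet/Neumann linearized problem has limited regularity near the corners $(0,0)$ and $(L,0)$, so one must verify that every boundary integrand represents an admissible trace and that no singular contribution is discarded. The $y=0$ boundary is tame because $v|_{y=0}=0$ and $u_s|_{y=0}=u_b>0$; the Dirichlet trace at $x=0$ eliminates those contributions; and the stress-free condition $p-2\varepsilon u_x=0,\ u_y+\varepsilon v_x=0$ at $x=L$ is designed exactly so that the remaining boundary integrals from the $\partial_x$ integrations either vanish or contribute with the favorable sign---this is the structural reason for the particular form of \eqref{lin-BCs}. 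Once \eqref{stability} is in hand, existence follows by a Galerkin scheme on truncated strips $[0,L]\times[0,N]$ with basis adapted to $[[\cdot,\cdot]]$ in the spirit of Lemma~\ref{lem-step1}, letting $N\to\infty$; uniqueness is immediate from \eqref{stability} applied to the difference of two solutions with zero data.
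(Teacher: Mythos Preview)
Your two-step strategy---basic energy estimate followed by the positivity estimate via the multiplier $v/u_s$---is exactly the paper's approach, carried out in Lemmas~\ref{lem-EE} and~\ref{lem-positivity}. Two minor deviations worth noting: (i) rather than forming the vorticity equation and then testing with $v/u_s$, the paper multiplies \eqref{linearu} by $\partial_y(v/u_s)$ and \eqref{linearv} by $-\varepsilon\,\partial_x(v/u_s)$ directly, which avoids differentiating the equations (important given only $H^{3/2+}$ regularity at the corners) and means the pressure is \emph{not} eliminated but instead contributes a boundary term $\int_{x=L}\partial_y(v/u_s)\,p=-2\varepsilon\int_{x=L}v_y^2/u_s+\ldots$ with favorable sign; (ii) existence is obtained not by Galerkin but by Schaefer's fixed point theorem applied to the Stokes operator on truncated domains $\Omega_N=[0,L]\times[0,N]$, with the a~priori bound \eqref{stability} supplying the required uniform estimate.
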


%\begin{proof}

The proof of Proposition \ref{prop-stability} consists of several steps.
First, we construct the solution in the artificial cut-off domain: 
\begin{equation*}
\Omega _{N}: = \{0\leq x\leq L,\quad 0\leq y\leq N\},
\end{equation*}
with the no-slip boundary conditions $[u,v]=0$ prescribed at $y=N$. We shall
apply the standard Schaefer's fixed theorem (see, for instance, \cite[%
Section 9.2.2]{Evans}) for the space $X=\{\|[u,v]\|_{H^{1}(\Omega _{N})}\leq
C\},$ for some fixed constant $C>0.$ Without loss of generality, we may
assume the source terms $f$ and $g$ are smooth. Then the standard regularity
theory for the Stokes problem yields $H^3$ regularity for $[u,v]$, except at
the four corners $[0,0],[0,N]$, $[L,0]$ and $[L,N]$. In addition, by \cite%
{O,OS, BR}, we know that $[u,v]\in H^{3/2+}$ and $p\in H^{1/2+\text{ }}$%
including the four corners, and hence, the $H^1$ norm of $[u,v]$ makes
sense. Indeed by the trace theorem, we have 
\begin{equation*}
\|\nabla \lbrack u,v]\|_{L^{2+}(\Gamma )}+\|p\|_{L^{2+}(\Gamma )}\lesssim
\|[u,v]\|_{H^{3/2+}}+\|p\|_{H^{1/2+}},
\end{equation*}
for any finite piece-wise $C^{1}$ curve $\Gamma.$ We now take $\Gamma
=\Gamma _{\delta }$ to be the curve of intersection of $\Omega _{N}$ and the
circle of radius $\delta $ and centered at the four corners, respectively.
Clearly, 
\begin{equation*}
\int_{\Gamma _{\delta }}|\nabla u|^{2}+|\nabla v|^{2}\lesssim o(1)\Big[ %
|\nabla u|_{L^{2+}(\Gamma _{\delta })}^{2}+|\nabla v|_{L^{2}(\Gamma _{\delta
})}^{2}\Big] \lesssim o(1) \Big[ \|[u,v]\|_{H^{3/2+}}+\|p\|_{H^{1/2+}}\Big]
\end{equation*}%
where $o(1)\rightarrow 0$ as $\delta \rightarrow 0.$ This justifies the
meaning of $H^1$ norm of the solution $[u,v]$ in the presence of corners.

We shall now derive uniform a priori estimates for (\ref{linearu})-%
\eqref{lin-BCs}. Taking the limit $N\rightarrow \infty$ yields the uniform a
priori bound \eqref{stability}. The existence of the solution and hence the
Proposition \ref{prop-stability} would then follow from a direct application
of the Schaefer's fixed point theorem; see \cite[Theorem 4, p. 504]{Evans}
and Section \ref{sec-proofStability}, below. As will be seen shortly, the
positivity estimate \eqref{positivity-intro} plays a crucial role.

% In what follows, we shall derive uniform estimates of the solution to the problem \eqref{linearu}-\eqref{linearv} on $[0.L]\times \RR_+$. 

\subsection{Energy estimates}

\label{sec-EE}

\begin{lemma}
\label{lem-EE} Let $[u,v]$ be the solution to the problem \eqref{linearu}-%
\eqref{lin-BCs}. Assume that $\varepsilon \ll L$. There holds 
\begin{equation*}
\Vert \nabla _{\varepsilon}u\Vert _{2}^{2}+\int_{x=L}u_{s}(u^{2}+\varepsilon
v^{2})\lesssim L\Vert \nabla _{\varepsilon }v\Vert _{L^{2}}^{2}+\Vert f\Vert
_{L^{2}}^{2}+\varepsilon \Vert g\Vert _{L^{2}}^{2}.
\end{equation*}
\end{lemma}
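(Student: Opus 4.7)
The plan is to test \eqref{linearu} against $u$ and \eqref{linearv} against $\varepsilon v$, integrate over $\Omega$ (with the artificial wall $y=N$), and add. The Laplacian contributions integrate by parts to $\|\nabla_{\varepsilon}u\|_{L^2}^2 + \varepsilon\|\nabla_{\varepsilon}v\|_{L^2}^2$ plus boundary residues at $x=L$; the integrals on $x=0$, $y=0$ and $y=N$ vanish by the no-slip/Dirichlet conditions. The two pressure terms combine through $u_x+v_y=0$ into $\int_{x=L} pu\,dy = 2\varepsilon\int_{x=L} uu_x\,dy$, using the stress-free BC $p=2\varepsilon u_x$. For convection, I use that $u_{sx}+v_{sy}=0$ (since the Prandtl pair $[u_p^0,v_p^0]$ and the Euler pair $[u_e^1,v_e^1]$ are each divergence-free and $u_e^0$ is $x$-independent); integration by parts then yields the positive boundary term $\tfrac12\int_{x=L} u_s(u^2+\varepsilon v^2)$ together with bulk remainders $\int u_{sx}(u^2-\varepsilon v^2) + \int vu u_{sy} + \varepsilon\int uvv_{sx}$. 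The $x=L$ residues from pressure and viscosity, using the second stress-free condition $u_y+\varepsilon v_x=0$ and integration by parts in $y$ along $\{x=L\}$, collapse to $2\varepsilon \int_{x=L} vu_y\,dy$.

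The decisive step is bounding the large convection $\int vu u_{sy}$. The key observation is $\|y\,u_{sy}\|_{L^\infty}\le C$ uniformly in $\varepsilon$: the Prandtl contribution $u^0_{py}$ decays rapidly in $y$ by Corollary~\ref{cor-Prandtl0}, while the Euler contributions satisfy $y\cdot\sqrt\varepsilon\,\partial_z u_e^0(\sqrt\varepsilon y) = z\,\partial_z u_e^0(z)$ with $z=\sqrt\varepsilon y$ bounded (and analogously for $\sqrt\varepsilon u_e^1$). Combining with Hardy's inequality $\|v/y\|_{L^2}\le 2\|v_y\|_{L^2}$ (from $v(x,0)=0$) and Poincar\'e's $\|u\|_{L^2}\le L\|u_x\|_{L^2}$ (from $u(0,y)=0$) gives
\[
\Big|\int vu u_{sy}\Big| \le \|y\,u_{sy}\|_{L^\infty}\|v/y\|_{L^2}\|u\|_{L^2} \le CL\|v_y\|_{L^2}\|u_x\|_{L^2}.
\]
The divergence-free identity $\|u_x\|_{L^2}=\|v_y\|_{L^2}$ then reduces this to $|\int vu u_{sy}|\le CL\|v_y\|_{L^2}^2\le CL\|\nabla_\varepsilon v\|_{L^2}^2$, which is precisely the shape appearing on the right side of the claim.

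The same three tools absorb the remaining bulk terms $|\int u_{sx}(u^2-\varepsilon v^2)|\le CL^2\|\nabla_\varepsilon v\|^2$ and $|\varepsilon\int uvv_{sx}|\le CL^2\sqrt\varepsilon\|\nabla_\varepsilon v\|^2$, both dominated by $CL\|\nabla_\varepsilon v\|^2$ for $L,\varepsilon$ small. The source terms give $|\int fu|\le L\|f\|\|v_y\|\lesssim \|f\|^2 + L\|\nabla_\varepsilon v\|^2$ and $|\varepsilon\int gv|\le\sqrt\varepsilon L\|g\|\|\nabla_\varepsilon v\|\lesssim \varepsilon\|g\|^2 + L\|\nabla_\varepsilon v\|^2$ by Young's inequality. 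The main obstacle I anticipate is the residual boundary integral $2\varepsilon\int_{x=L} vu_y\,dy$: substituting $u_y=-\varepsilon v_x$ turns it into $-2\varepsilon^2\int_{x=L} vv_x\,dy$, which I control by the weighted Young's inequality $|2\varepsilon vu_y| \le \tfrac{\varepsilon}{4}u_s v^2 + \tfrac{4\varepsilon}{u_s}u_y^2$; the first half absorbs into $\tfrac{\varepsilon}{2}\int_{x=L} u_s v^2$, and the second, equal to $\tfrac{4\varepsilon^3}{u_s}\int_{x=L} v_x^2$ after reusing the stress-free condition, is controlled by a trace estimate on $\{x=L\}$, legitimate thanks to the $H^{3/2+}$ regularity of $[u,v]$ mentioned just before Section~\ref{sec-EE}. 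Dropping the positive $\varepsilon\|\nabla_\varepsilon v\|^2$ and collecting these bounds yields the claim.
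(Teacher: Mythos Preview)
Your overall plan is correct, and your handling of the convection terms is essentially fine (the use of $\|y\,u_{sy}\|_{L^\infty}$ together with Hardy's inequality is a clean variant of the paper's approach, though the $\sqrt\varepsilon\,u_e^1$ contribution to $y\,u_{sy}$ requires $H^3$-type control on $v_e^1$ and is somewhat delicate; the paper instead uses the weaker and more easily verified quantity $\sup_x\int y|u_{sy}|^2\,dy$, cf.~\eqref{uintegral}).

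The genuine gap is in your treatment of the boundary residual $2\varepsilon\int_{x=L} v\,u_y\,dy$. After your Young's inequality you are left with $\tfrac{4\varepsilon^3}{u_s}\int_{x=L}v_x^2\,dy$, and invoking the $H^{3/2+}$ regularity does not close the estimate: that regularity is only a \emph{qualitative} statement, used in the paper merely to justify integrations by parts near the corners, with no uniform dependence on $\varepsilon$. A trace bound $\int_{x=L}v_x^2\lesssim\|v\|_{H^{3/2+}}^2$ could in principle be of size $\varepsilon^{-M}$ for large $M$, so the prefactor $\varepsilon^3$ does not save you. There is no way to control $\int_{x=L}v_x^2$ (equivalently $\int_{x=L}u_y^2$ or $\int_{x=L}v_y^2$) by the energy norms $\|\nabla_\varepsilon u\|_{L^2}$, $\|\nabla_\varepsilon v\|_{L^2}$ and $\|f\|_{L^2}$, $\|g\|_{L^2}$ alone.

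The paper avoids this entirely by a different integration-by-parts scheme: it writes $\Delta_\varepsilon u=2\varepsilon u_{xx}+(u_y+\varepsilon v_x)_y$ and $\Delta_\varepsilon v=(u_y+\varepsilon v_x)_x+2v_{yy}$, the natural decomposition adapted to the symmetric stress tensor. With this grouping the viscous dissipation reads $\iint[2\varepsilon u_x^2+(u_y+\varepsilon v_x)^2+2\varepsilon v_y^2]$, and all boundary contributions at $x=L$ cancel \emph{exactly} against the pressure term, thanks to the two stress-free conditions $p=2\varepsilon u_x$ and $u_y+\varepsilon v_x=0$. The cross term $2\varepsilon\iint u_y v_x$ then sits inside a bulk integral and is handled by Young's inequality, contributing only $2\varepsilon\|\nabla_\varepsilon v\|_{L^2}^2\lesssim L\|\nabla_\varepsilon v\|_{L^2}^2$ to the right-hand side. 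In fact your boundary residual satisfies the identity $-2\varepsilon\int_{x=L}u\,v_y=2\varepsilon\iint u_y v_x+2\varepsilon\|v_y\|_{L^2}^2$, so rewriting it as a bulk integral this way is the simplest fix.
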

\begin{proof} We multiply (\ref{linearu}) with $u$ and (\ref{linearv}) with $\varepsilon v$
(or equivalently, take $[u,v]$ as the test function in the weak formulation)
get 
\begin{eqnarray*}
&&\iint [u_{s}u_{x}+u_{sx}u+v_{s}u_{y}+vu_{sy}+p_{x}-\Delta _{\varepsilon
}u]u \\
&&+\iint \varepsilon \lbrack
u_{s}v_{x}+u_{sx}v+v_{s}v_{y}+vv_{sy}+p_{y}-\varepsilon \Delta _{\varepsilon
}v]v \\
&=&\iint uf+\iint \varepsilon vg.
\end{eqnarray*}%
By writing \ $\Delta _{\varepsilon }u=2\varepsilon u_{xx}+(u_{y}+\varepsilon
v_{x})_{y},$ $\Delta _{\varepsilon }v=(u_{y}+\varepsilon v_{x})_{x}+2v_{yy}$
and performing the integration by parts multiple times, the left-hand side
of the above is reduced to 
\begin{eqnarray*}
&&\iint \{u_{s}u_{x}+u_{sx}u+v_{s}u_{y}+vu_{sy}+p_{x}-\Delta _{\varepsilon
}u\}u \\
&&+\iint \varepsilon \lbrack
u_{s}v_{x}+u_{sx}v+v_{s}v_{y}+vv_{sy}+p_{y}-\varepsilon \Delta _{\varepsilon
}v]v \\
&=&-\iint u_{sx}\frac{u^{2}+\varepsilon v^{2}}{2}+u_{s}\frac{%
u^{2}+\varepsilon v^{2}}{2}\Big |_{x=0}^{x=L}+\iint u_{sx}[u^{2}+\varepsilon
v^{2}]-\iint v_{sy}\frac{u^{2}+\varepsilon v^{2}}{2} \\
&&+\iint [vu_{sy}u+\varepsilon v_{sy}v^{2}]+\int pu\Big |_{x=0}^{x=L} \\
&&-2\varepsilon \int u_{x}u\Big |_{x=0}^{x=L}+2\varepsilon \iint
u_{x}^{2}+\iint (u_{y}+\varepsilon v_{x})u_{y} \\
&&-\varepsilon \int (u_{y}+\varepsilon v_{x})v\Big |_{x=0}^{x=L}+\iint
\varepsilon (u_{y}+\varepsilon v_{x})v_{x}+2\iint \varepsilon v_{y}^{2}.
\end{eqnarray*}%
By using the boundary conditions $p=2\varepsilon u_{x}$ at $x=L$ and $[u,v]=$
$0$ at $x=0,$ and the divergence-free condition $u_{sx}+v_{sy}=0$, the
energy estimate now becomes 
\begin{eqnarray*}
&&\int_{x=L}u_{s}\frac{u^{2}+\varepsilon v^{2}}{2}+\iint [2\varepsilon
u_{x}^{2}+(u_{y}+\varepsilon v_{x})^{2}+2\varepsilon v_{y}^{2}] \\
&=&-\iint u_{sx}[u^{2}+\varepsilon v^{2}]-\iint [vu_{sy}u+\varepsilon
v_{sy}v^{2}]+\iint uf+\varepsilon \iint vg.
\end{eqnarray*}%
Here, we note that 
\begin{equation*}
\begin{aligned} \iint [2\varepsilon u_{x}^{2}+(u_{y}+\varepsilon
v_{x})^{2}+2\varepsilon v_{y}^{2}] &= \iint [2\varepsilon
u_{x}^{2}+u^2_{y}+\varepsilon^2 v_{x}+2\varepsilon v_{y}^{2}] + 2\varepsilon
\iint u_y v_x \\&\ge \frac 12 \| \nabla _\veps u\|_{L^2}^2 - 2\varepsilon \|
\nabla_\veps v \|_{L^2}^2,\end{aligned}
\end{equation*}%
in which we have used the Young inequality, giving the estimate $%
2\varepsilon \iint u_{y}v_{x}\leq \frac{1}{2}\Vert u_{y}\Vert
_{L^{2}}^{2}+2\varepsilon \Vert \nabla _{\varepsilon}v\Vert _{L^{2}}^{2}$.
In addition, since $[u,v]=0$ at $x=0$, we have the embedding inequalities $%
\Vert u\Vert _{L^{2}}\leq L\Vert u_{x}\Vert _{L^{2}}=L\Vert v_{y}\Vert
_{L^{2}},\Vert v\Vert _{L^{2}}\leq L\Vert v_{x}\Vert _{L^{2}}$. Hence, 
\begin{eqnarray*}
\iint uf+\varepsilon \iint vg &\leq &\Vert u\Vert _{L^{2}}\Vert f\Vert
_{L^{2}}+\varepsilon \Vert v\Vert _{L^{2}}\Vert g\Vert _{L^{2}} \\
&\leq &L^{2}\{\Vert v_{y}\Vert _{L^{2}}^{2}+\varepsilon \Vert v_{x}\Vert
^{2}\}+\Vert f\Vert _{L^{2}}^{2}+\varepsilon \Vert g\Vert _{L^{2}}^{2}.
\end{eqnarray*}%
Similarly, since $v=0$ at $y=0$, we can estimate $v=\int_{0}^{y}v_{y}\leq 
\sqrt{y}\left\{ \int_{0}^{y}v_{y}^{2}\right\} ^{1/2}.$ Hence, 
\begin{eqnarray*}
&&-\iint u_{sx}[u^{2}+\varepsilon v^{2}]-\iint [v\partial
_{y}u_{s}u+\varepsilon \partial _{y}v_{s}v^{2}] \\
&\leq &L^{2}\iint |u_{sx}|u_{x}^{2}+\varepsilon \iint |yu_{sx}|\left\{
\int_{0}^{y}v_{y}^{2}\right\} +\iint \left\{
\int_{0}^{x}|u_{x}^{2}dx|\right\} ^{1/2}\sqrt{y}\partial _{y}u_{s}\left\{
\int_{0}^{y}|v_{y}^{2}dx|\right\} ^{1/2} \\
&&+\varepsilon \iint y\partial _{y}v_{s}\left\{
\int_{0}^{y}v_{y}^{2}dy\right\} \\
&\leq &\left\{ L^{2}\sup |\partial _{x}u_{s}|+\varepsilon \sup_{x}\int
|y\partial _{y}v_{s}|dy+L\sup_{x}\sqrt{\int y\{\partial _{y}u_{s}\}^{2}dy}%
\right\} \Vert v_{y}\Vert _{L^{2}}^{2}.
\end{eqnarray*}

This proves the claimed inequality in the lemma, with 
\begin{equation*}
\begin{aligned} C(\varepsilon, L,u_s,v_s): = \left\{ L^{2}\sup
|u_{sx}|+\varepsilon \sup_{x}\int |yv_{sy}|dy+L\sup_{x}\sqrt{\int
y\{u_{sy}\}^{2}dy}+L^{2} + 2\varepsilon \right\} . \end{aligned}
\end{equation*}%
It remains to give the bound on the constant $C(\varepsilon ,L,u_{s},v_{s})$%
. We recall that $u_{s}=u_{e}^{0}+u_{p}^{0}+\sqrt{\varepsilon }u_{e}^{1}$
and $v_{s}=v_{p}^{0}+v_{e}^{1}$. Keeping in mind that the zeroth-order
Prandtl layers $[u_{p}^{0},v_{p}^{0}]$ are smooth with arbitrarily high
Sobolev regularity and rapidly decaying in $y$. Hence, 
\begin{equation}
\Vert u_{sx}\Vert _{\infty }\leq \Vert u_{px}^{0}\Vert _{\infty }+\sqrt{%
\varepsilon }\Vert u_{ex}^{1}\Vert _{\infty }\leq C(u_{p}^{0})+\sqrt{%
\varepsilon }\Vert v_{e}^{1}(\cdot )\Vert _{H^{3}}\leq C,  \label{bound-usx}
\end{equation}%
thanks to bounds on the Euler flows, summarized in Section \ref{sec-defE1}.
Similarly, we have 
\begin{eqnarray}
\sup_{x}\int |yv_{sy}|dy &\leq &C(v_{p}^{0})+\varepsilon ^{1/2}\sup_{x}\int
|yv_{ez}^{1}(x,\sqrt{\varepsilon }y)|dy  \notag \\
&\leq &C(v_{p}^{0})+\varepsilon ^{-1/2}\sup_{x}\int |zv_{ez}^{1}|dz
\label{vintegral} \\
&\leq &C(v_{p}^{0})+C\varepsilon ^{-1/2}L^{-1}\Big(\iint
|zv_{ez}^{1}|dxdz+\iint |zv_{exz}^{1}|dxdz\Big)  \notag \\
&\lesssim &1+C\varepsilon ^{-1/2}L^{-1}\Vert \langle z\rangle
^{n}v_{e}^{1}\Vert _{H^{2}}^{2},
\end{eqnarray}%
which is bounded by $C(L)\varepsilon ^{-1/2}$, thanks to the estimates from
Lemma \ref{lem-ve}. Also, we have 
\begin{eqnarray}
\sup_{x}\int y\{u_{sy}\}^{2}dy &\leq &\sup_{x}\int y\Big[\varepsilon
|u_{ez}^{0}|^{2}+|u_{py}^{0}|^{2}+\varepsilon ^{2}|u_{ez}^{1}|^{2}\Big]\;dy 
\notag \\
&\leq &C(u_{p}^{0})+\sup_{x}\int \Big [z|u_{ez}^{0}|^{2}+\varepsilon
z|u_{ez}^{1}|^{2}\Big ]\;dz  \label{uintegral} \\
&\leq &C(u_{e}^{0},u_{p}^{0})+\varepsilon L^{-1}\Big(\iint
z|u_{ez}^{1}|^{2}dxdz+\iint z|u_{exz}^{1}|^{2}dxdz\Big)  \notag \\
&\lesssim &1+\varepsilon L^{-1}\Vert \langle z\rangle ^{n}v_{e}^{1}\Vert
_{H^{2}}^{2},  \notag
\end{eqnarray}%
which is again bounded by $C$, for $\varepsilon \ll L$. Putting this
together into the above definition of $C(\varepsilon ,L,u_{s},v_{s})$ and
the fact that $\varepsilon \ll L$ yield the lemma at once.
\end{proof}

\subsection{Positivity estimates}

\label{sec-positivity} In this section, we establish the following crucial
positivity estimate:

\begin{lemma}
\label{lem-positivity} Let $[u,v]$ be the solution to the problem %
\eqref{linearu}-\eqref{lin-BCs}. Assume that $\varepsilon \ll L$. There
holds 
\begin{equation}
\Vert \nabla _{\varepsilon }v\Vert _{L^{2}}^{2}+\varepsilon
^{2}\int_{x=0}v_{x}^{2}+\varepsilon \int_{x=L}v_{y}^{2}\lesssim \Vert \nabla
_{\varepsilon }u\Vert _{L^{2}}^{2}+L\Vert \nabla _{\varepsilon }v\Vert
_{L^{2}}^{2}+\Vert f\Vert _{L^{2}}^{2}+\varepsilon \Vert g\Vert _{L^{2}}^{2}.
\label{vorticity}
\end{equation}
\end{lemma}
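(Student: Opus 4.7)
The plan is to follow the strategy outlined in the introduction around \eqref{positivity-intro}--\eqref{low-vy}: eliminate the pressure by passing to the vorticity equation, test against the multiplier $v/u_s$, and exploit the positivity of the second-order operator $-\partial_{yy}+u_{syy}/u_s$. The assumption \eqref{nozero} and the maximum principle \eqref{minw} give $u_s\ge c_0>0$, so $v/u_s$ is a legitimate multiplier.

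First I form $\varepsilon\partial_x$ of \eqref{linearv} minus $\partial_y$ of \eqref{linearu}. The cross terms $p_{xy}$ cancel, and using the divergence-free relation $u_x=-v_y$ the second-order derivatives collapse to give
$$-u_s\Delta_\varepsilon v+v_s\Delta_\varepsilon u+v\Delta_\varepsilon u_s-u\Delta_\varepsilon v_s-\Delta_\varepsilon \omega = f_y-\varepsilon g_x,$$
with $\omega=u_y-\varepsilon v_x$. Multiplying by $v/u_s$ and integrating over $\Omega$, the principal part reads
$$\iint\Bigl[-\Delta_\varepsilon v+\tfrac{\Delta_\varepsilon u_s}{u_s}v\Bigr]v,$$
which by the identity \eqref{positivity-intro} and its direct $x$-analogue equals
$$\iint u_s^2\Bigl|\partial_y\bigl(\tfrac{v}{u_s}\bigr)\Bigr|^2+\varepsilon\iint u_s^2\Bigl|\partial_x\bigl(\tfrac{v}{u_s}\bigr)\Bigr|^2$$
up to boundary traces. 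The estimate \eqref{low-vy} and its $x$-analogue upgrade this coercive quantity to the desired $\|\nabla_\varepsilon v\|_{L^2}^2$.

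Next comes the bookkeeping of boundary traces. At $y=0$ the no-slip condition $v=0$ kills all $y$-traces, and decay handles $y=\infty$. At $x=0$ the Dirichlet condition $v=0$ eliminates most contributions and leaves only the positive trace $\varepsilon^2\int_{x=0}v_x^2/u_s$ inherited from the $\varepsilon\partial_x$ step used to derive the vorticity equation. At $x=L$ the stress-free conditions $p-2\varepsilon u_x=0$ and $u_y+\varepsilon v_x=0$, the latter appearing after integration by parts of $\Delta_\varepsilon\omega\cdot v/u_s$, are engineered so that the surviving trace becomes exactly $\varepsilon\int_{x=L}v_y^2/u_s$. These are the two positive quantities on the left-hand side of \eqref{vorticity}. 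The remaining cross terms $\iint(v_s/u_s)v\,\Delta_\varepsilon u$ and $\iint(u/u_s)v\,\Delta_\varepsilon v_s$ are, after integration by parts, controlled by Cauchy--Schwarz using the uniform profile bounds \eqref{bound-usx}, \eqref{vintegral}, \eqref{uintegral}, the Hardy-type bound $|v/u_s|\lesssim\sqrt{y}\|(v/u_s)_y\|_{L^2(\mathbb{R}_+)}$ coming from $v(x,0)=0$, and Poincar\'e applied to $u$ (which vanishes at $x=0$); for $L$ small they absorb into $\|\nabla_\varepsilon u\|_{L^2}^2+L\|\nabla_\varepsilon v\|_{L^2}^2$. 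The source $f_y-\varepsilon g_x$ is integrated by parts back onto $v/u_s$, giving $\iint f\,(v/u_s)_y+\varepsilon\iint g\,(v/u_s)_x$, which Young's inequality splits into a small multiple of the positive quadratic form plus $\|f\|_{L^2}^2+\varepsilon\|g\|_{L^2}^2$.

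The main obstacle is the clean extraction of the two positive boundary traces at $x=0,L$: one must integrate by parts \emph{all} of the higher-order contributions in the vorticity equation in a consistent order and invoke each stress-free condition at precisely the right step, so that no uncontrolled signed trace survives and the positive traces line up as in \eqref{vorticity}. A secondary technical subtlety is that $[u,v]$ only has $H^{3/2+}$ regularity at the four corners, so every integration by parts must first be performed on $\Omega$ with small corner disks of radius $\delta$ removed, and the vanishing of the corner contributions as $\delta\to0$ justified via the trace estimate recorded just before Section \ref{sec-EE}.
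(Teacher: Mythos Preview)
Your proposal is essentially correct and follows the same strategy as the paper: test the vorticity formulation against $v/u_s$ and exploit the positivity of $-\partial_{yy}+u_{syy}/u_s$. The paper presents this slightly differently, multiplying \eqref{linearu} by $\partial_y(v/u_s)$ and \eqref{linearv} by $-\varepsilon\partial_x(v/u_s)$ \emph{directly} rather than first differentiating to form the vorticity equation; the two are equivalent by one integration by parts, but the paper's ordering keeps the pressure in play and avoids writing $\Delta_\varepsilon\omega$ (hence no third derivatives appear, which matches the available $H^{3/2+}$ regularity more transparently).

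One imprecision worth flagging: in the paper's ordering, the favorable trace $\varepsilon\int_{x=L}v_y^2/u_s$ arises from the \emph{pressure} term via the condition $p=2\varepsilon u_x$ (after writing $\int_{x=L}\partial_y(v/u_s)\,p = -2\varepsilon\int_{x=L}\partial_y(v/u_s)\,v_y$), while the Laplacian boundary contributions at $x=L$ collapse to lower order using $u_y+\varepsilon v_x=0$. In your pressure-eliminated formulation the first stress-free condition is never invoked, so the same good trace must emerge from the combination of the $\Delta_\varepsilon\omega$ and $-\Delta_\varepsilon v$ boundary terms using only $\omega|_{x=L}=0$; this works (the two formulations are identical after the integration by parts), but your attribution of the mechanism to both stress-free conditions is slightly off. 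Similarly, the $\varepsilon^2\int_{x=0}v_x^2/u_s$ term in the paper appears from integrating $\varepsilon^2 v_x v_{xx}/u_s$ by parts in $x$ within the Laplacian block, not from the ``$\varepsilon\partial_x$ step'' itself. These are bookkeeping points; the argument is sound.
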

\begin{proof} We start from the identity: $\partial
_{y}\left\{ \frac{v}{u_{s}}\right\} \times $(\ref{linearu})$-\varepsilon
\partial _{x}\left\{ \frac{v}{u_{s}}\right\} \times $(\ref{linearv}).
Formally, this is the vorticity equation multiplied by the test function $%
\frac{v}{u_{s}}$. This yields 
\begin{eqnarray}
&&\iint \partial _{y}\left\{ \frac{v}{u_{s}}\right\}
\{u_{s}u_{x}+u_{sx}u+v_{s}u_{y}+v\partial _{y}u_{s}+p_{x}-\Delta
_{\varepsilon }u\}  \notag \\
&&-\iint \partial _{x}\left\{ \frac{v}{u_{s}}\right\} \{\varepsilon \lbrack
u_{s}v_{x}+u\partial _{x}v_{s}+v_{s}v_{y}+v\partial
_{y}v_{s}]+p_{y}-\varepsilon \Delta _{\varepsilon }v\}  \label{vorticity-est}
\\
&=&\iint \partial _{y}\left\{ \frac{v}{u_{s}}\right\} f-\varepsilon \partial
_{x}\left\{ \frac{v}{u_{s}}\right\} g.  \notag
\end{eqnarray}%
Again, we use the inequality $|v|\leq \sqrt{y}\left\{
\int_{0}^{y}v_{y}^{2}\right\} ^{1/2}$, together with the estimates %
\eqref{vintegral}-\eqref{uintegral} on $[u_{s},v_{s}]$ to estimate the
right-hand side of \eqref{vorticity-est}. We have 
\begin{eqnarray*}
&&\iint \left\{ \frac{v_{y}}{u_{s}}-\frac{v\partial _{y}u_{s}}{u_{s}^{2}}%
\right\} f-\varepsilon \left\{ \frac{v_{x}}{u_{s}}-\frac{v\partial _{x}u_{s}%
}{u_{s}^{2}}\right\} g \\
&\leq &\frac{\sup_{x}|\langle y\rangle ^{1/2}\nabla _{\varepsilon
}u_{s}|_{2}+\sup \Vert u_{s}\Vert _{\infty }}{\{\min u_{s}\}^{2}}\{\Vert
f\Vert _{L^{2}}+\sqrt{\varepsilon }\Vert g\Vert _{L^{2}}\}\Vert \nabla
_{\varepsilon }v\Vert _{L^{2}} \\
&\lesssim &\{\Vert f\Vert _{L^{2}}+\sqrt{\varepsilon }\Vert g\Vert
_{L^{2}}\}\Vert \nabla _{\varepsilon }v\Vert _{L^{2}},
\end{eqnarray*}%
in which the Young inequality can be applied to absorb the $L^{2}$ norm of $%
\nabla _{\varepsilon}v$ to the left hand side of \eqref{vorticity}.

Next, we treat each term on the left-hand side of \eqref{vorticity-est}.
First, integrating by parts multiple times, we have 
\begin{eqnarray}
&&\iint \partial _{y}\left\{ \frac{v}{u_{s}}\right\} \{u_{s}u_{x}+v\partial
_{y}u_{s}\}-\iint \partial _{x}\left\{ \frac{v}{u_{s}}\right\} \varepsilon
u_{s}v_{x}  \notag \\
&=&\iint \partial _{y}\left\{ \frac{v}{u_{s}}\right\}
\{-u_{s}v_{y}+v\partial _{y}u_{s}\}-\iint \left\{ \frac{v_{x}}{u_{s}}-\frac{%
\partial _{x}u_{s}v}{u_{s}^{2}}\right\} \varepsilon u_{s}v_{x}  \notag \\
&=&\iint \left\{ \frac{v_{y}}{u_{s}}-\frac{\partial _{y}u_{s}v}{u_{s}^{2}}%
\right\} \{-u_{s}v_{y}+v\partial _{y}u_{s}\}-\varepsilon \iint
v_{x}^{2}+\iint \frac{\varepsilon \partial _{x}u_{s}vv_{x}}{u_{s}^{2}} 
\notag \\
&=&-\iint v_{y}^{2}+2\iint vv_{y}\frac{\partial _{y}u_{s}}{u_{s}}-\iint 
\frac{\{\partial _{y}u_{s}\}^{2}v^{2}}{u_{s}^{2}}-\varepsilon \iint
v_{x}^{2}+\iint \frac{\varepsilon \partial _{y}u_{s}vv_{x}}{u_{s}^{2}} 
\notag \\
&=&-\iint v_{y}^{2}-\iint \partial _{y}\left\{ \frac{\partial _{y}u_{s}}{%
u_{s}}\right\} v^{2}-\iint v^{2}\frac{\{\partial _{y}u_{s}\}^{2}}{u_{s}^{2}}%
-\varepsilon \iint v_{x}^{2}+\iint \frac{\varepsilon \partial _{y}u_{s}vv_{x}%
}{u_{s}^{2}}  \notag \\
&=&-\iint v_{y}^{2}-\iint \frac{\partial _{yy}u_{s}}{u_{s}}v^{2}-\varepsilon
\iint v_{x}^{2}+\iint \frac{\varepsilon \partial _{y}u_{s}vv_{x}}{u_{s}^{2}}
\notag \\
&=&-\iint u_{s}^{2}|\partial _{y}\left\{ \frac{v}{u_{s}}\right\}
|^{2}-\varepsilon \iint v_{x}^{2}+\iint \frac{\varepsilon \partial
_{y}u_{s}vv_{x}}{u_{s}^{2}},  \notag
\end{eqnarray}%
in which the last equality is precisely due to the positivity estimate (\ref%
{positivity-intro}). From (\ref{low-vy}), we obtain a lower bound 
\begin{equation}
\iint \partial _{y}\left\{ \frac{v}{u_{s}}\right\} \{u_{s}u_{x}+v\partial
_{y}u_{s}\}-\iint \partial _{x}\left\{ \frac{v}{u_{s}}\right\} \varepsilon
u_{s}v_{x}\lesssim -\iint |\nabla _{\varepsilon }v|^{2}+\iint \frac{%
\varepsilon \partial _{y}u_{s}vv_{x}}{u_{s}^{2}},  \label{vort-est1}
\end{equation}%
which crucially yields a bound on the $L^{2}$ norm of $\nabla _{\varepsilon
}v$; or precisely, the $L^{2}$ norm of $\nabla _{\varepsilon }v$ appearing
on the left-hand side of \eqref{vorticity}.

Next, we treat the pressure term. Integrating by parts, with recalling that $%
p=2\varepsilon u_{x}$ at $x=L$, we have 
\begin{equation*}
\begin{aligned} \iint \partial _{y}\left\{ \frac{v}{u_{s}}\right\}
p_{x}-\iint \partial _{x}\left\{ \frac{v}{u_{s}}\right\} p_{y}
&=\int_{x=L}\partial _{y}\left\{ \frac{v}{u_{s}}\right\} p \\&=2\varepsilon
\int_{x=L}\partial _{y}\left\{ \frac{v}{u_{s}}\right\} u_{x} =-2\varepsilon
\int_{x=L}\partial _{y}\left\{ \frac{v}{u_{s}}\right\} v_{y} \\
&=-2\varepsilon \int_{x=L}\frac{v_{y}^{2}}{u_{s}}-2\varepsilon
\int_{x=L}\partial _{y}\left\{ \frac{1}{u_{s}}\right\} vv_{y} \end{aligned}
\end{equation*}%
in which we can estimate 
\begin{eqnarray*}
\varepsilon \int_{x=L}\partial _{y}\left\{ \frac{1}{u_{s}}\right\} vv_{y}
&\leq &\varepsilon \int_{x=L}\frac{\partial _{y}u_{s}}{u_{s}^{2}}vv_{y}\leq
\varepsilon \sup \frac{\partial _{y}u_{s}}{u_{s}^{3/2}}\int_{x=L}v\frac{v_{y}%
}{\sqrt{u_{s}}} \\
&\leq &\{C(u_{ey}^{0},u_{py}^{0})+\varepsilon^{1/2} \|u_{ez}^{1}\|_{\infty
}\}L\varepsilon \|v_{x}\|_{L^2}\left\{ \int_{x=L}\frac{v_{y}^{2}}{u_{s}}%
\right\} ^{1/2}.
\end{eqnarray*}%
Here, thanks to the bounds on the Euler flows, summarized in Section \ref%
{sec-defE1}, we in particular have $\varepsilon^{1/2} \|u_{ez}^{1}\|_{\infty
} \le C \varepsilon^{1/2} \| u_e^1\|_{H^3}\lesssim 1$. Together with the
Young inequality, we thus obtain 
\begin{equation}  \label{pressure}
\begin{aligned} \iint \Big[ \partial _{y}\left\{ \frac{v}{u_{s}}\right\}
p_{x}- \partial _{x}\left\{ \frac{v}{u_{s}}\right\} p_{y} \Big] &\le -\frac
\varepsilon2 \int_{x=L}\frac{v_{y}^{2}}{u_{s}} + C(u_{s}, v_s) L^2 \|
\nabla_\veps v\|_{L^2}^2, \end{aligned}
\end{equation}%
in which we stress that the boundary term is favorable.

%
%which can be absorbed for $\{C(u_{ey}^{0},u_{py}^{0})+\varepsilon
%\|u_{ey}^{1}\|_{\infty }\}L<<1.$

Next, we shall treat terms involving the Laplacian. Again, we recall from 
\cite{O,OS} that the Stokes problem yields $[u,v]\in H^{3/2+}$ and $p\in
H^{1/2+},$ and so their traces $[\nabla u,\nabla v]\in L^{2+}(\Gamma )$ and $%
p\in L^{2+}(\Gamma )$ on any smooth curve $\Gamma .$ Moreover, $[u,v]\in
H^{2}$ and $p\in H^{1}$ away from the four corners of $[0,L]\times \lbrack
0,N]$. Hence, we can evaluate 
\begin{eqnarray}
I:= &&\iint \Big[-\partial _{y}\left\{ \frac{v}{u_{s}}\right\} \Delta
_{\varepsilon }u+\partial _{x}\left\{ \frac{v}{u_{s}}\right\} \varepsilon
\Delta _{\varepsilon }v\Big]  \notag \\
&=&\iint \Big[-\partial _{y}\left\{ \frac{v}{u_{s}}\right\}
[u_{yy}+\varepsilon u_{xx}]+\partial _{x}\left\{ \frac{v}{u_{s}}\right\}
\varepsilon \lbrack v_{yy}+\varepsilon v_{xx}]\Big]  \notag \\
&=&\iint \Big[-\left\{ \frac{v_{y}}{u_{s}}-\frac{v\partial _{y}u_{s}}{%
u_{s}^{2}}\right\} [u_{yy}+\varepsilon u_{xx}]+\left\{ \frac{v_{x}}{u_{s}}-%
\frac{v\partial _{x}u_{s}}{u_{s}^{2}}\right\} \varepsilon \lbrack
v_{yy}+\varepsilon v_{xx}]\Big]  \notag \\
&=&\iint \Big[\frac{u_{yy}u_{x}}{u_{s}}+\varepsilon \frac{u_{xx}u_{x}}{u_{s}}%
+\varepsilon \frac{v_{x}v_{yy}}{u_{s}}+\varepsilon ^{2}\frac{v_{x}v_{xx}}{%
u_{s}}\Big]  \notag \\
&&+\iint \Big[\left\{ \frac{v\partial _{y}u_{s}}{u_{s}^{2}}\right\}
[u_{yy}+\varepsilon u_{xx}]-\frac{v\partial _{x}u_{s}}{u_{s}^{2}}%
[\varepsilon v_{yy}+\varepsilon ^{2}v_{xx}]\Big].  \notag
\end{eqnarray}%
Now taking integration by parts respectively in each integration above, with
a special attention on the boundary contributions, we get 
\begin{eqnarray}
I &=&-\frac{1}{2}\int_{x=L}\frac{u_{y}^{2}}{u_{s}}+\frac{1}{2}\iint \partial
_{x}\left\{ \frac{1}{u_{s}}\right\} u_{y}^{2}-\iint \partial _{y}\left\{ 
\frac{1}{u_{s}}\right\} u_{y}u_{x}  \label{Lap-vortterm} \\
&&+\frac{\varepsilon }{2}\int_{x=L}\frac{u_{x}^{2}}{u_{s}}-\frac{\varepsilon 
}{2}\iint \partial _{x}\left\{ \frac{1}{u_{s}}\right\} u_{x}^{2}  \notag \\
&&-\frac{\varepsilon }{2}\int_{x=L}\frac{v_{y}^{2}}{u_{s}}+\frac{\varepsilon 
}{2}\iint \partial _{x}\left\{ \frac{1}{u_{s}}\right\} v_{y}^{2}-\varepsilon
\iint \partial _{y}\left\{ \frac{1}{u_{s}}\right\} v_{y}v_{x}  \notag \\
&&+\frac{\varepsilon ^{2}}{2}\int_{x=L}\frac{v_{x}^{2}}{u_{s}}-\frac{%
\varepsilon ^{2}}{2}\int_{x=0}\frac{v_{x}^{2}}{u_{s}}-\frac{\varepsilon ^{2}%
}{2}\iint \partial _{x}\left\{ \frac{1}{u_{s}}\right\} v_{x}^{2}  \notag \\
&&-\iint \partial _{y}\left\{ \frac{v\partial _{y}u_{s}}{u_{s}^{2}}\right\}
u_{y}-\varepsilon \iint \partial _{x}\left\{ \frac{v\partial _{y}u_{s}}{%
u_{s}^{2}}\right\} u_{x}+\varepsilon \int_{x=L}\left\{ \frac{v\partial
_{y}u_{s}}{u_{s}^{2}}\right\} u_{x}  \notag \\
&&+\varepsilon \iint \partial _{y}\left\{ \frac{v\partial _{x}u_{s}}{%
u_{s}^{2}}\right\} v_{y}+\varepsilon ^{2}\iint \partial _{x}\left\{ \frac{%
v\partial _{x}u_{s}}{u_{s}^{2}}\right\} v_{x}-\varepsilon
^{2}\int_{x=L}\left\{ \frac{v\partial _{y}u_{s}}{u_{s}^{2}}\right\} v_{x}. 
\notag
\end{eqnarray}%
Let us first take care of boundary contributions. Notice that there is only
one boundary term at $x=0$, which is a favorable term: $-\frac{\varepsilon
^{2}}{2}\int_{x=0}\frac{v_{x}^{2}}{u_{s}}$. Now as for boundary terms at $x=L
$, one can use the fact that $u_{y}+\varepsilon v_{x}=0$ and $u_{x}+v_{y}=0$
at $x=L$, and hence, the boundary contributions at $x=L$ can be simplified
as 
\begin{eqnarray*}
B^{L}:= &&-\frac{1}{2}\int_{x=L}\frac{u_{y}^{2}}{u_{s}}+\frac{\varepsilon }{2%
}\int_{x=L}\frac{u_{x}^{2}}{u_{s}}-\frac{\varepsilon }{2}\int_{x=L}\frac{%
v_{y}^{2}}{u_{s}}+\frac{\varepsilon ^{2}}{2}\int_{x=L}\frac{v_{x}^{2}}{u_{s}}
\\
&&+\varepsilon \int_{x=L}\left\{ \frac{v\partial _{y}u_{s}}{u_{s}^{2}}%
\right\} u_{x}-\varepsilon ^{2}\int_{x=L}\left\{ \frac{v\partial _{y}u_{s}}{%
u_{s}^{2}}\right\} v_{x} \\
&=&-\varepsilon \int_{x=L}\left\{ \frac{v\partial _{y}u_{s}}{u_{s}^{2}}%
\right\} v_{y}-\varepsilon \int_{x=L}\left\{ \frac{v\partial _{y}u_{s}}{%
u_{s}^{2}}\right\} u_{y} \\
&=&-\varepsilon \int_{x=L}\left\{ \frac{v\partial _{y}u_{s}}{u_{s}^{2}}%
\right\} v_{y}+\varepsilon \int_{x=L}\partial _{y}\left\{ \frac{v\partial
_{y}u_{s}}{u_{s}^{2}}\right\} u,
\end{eqnarray*}%
which can be estimated by 
\begin{eqnarray*}
&&B^{L}\leq \varepsilon \sqrt{L}\sup_{x}\left\vert \frac{\partial _{y}u_{s}}{%
u_{s}}\right\vert \Vert v_{x}\Vert _{L^{2}}\sqrt{\int_{x=L}\frac{v_{y}^{2}}{%
u_{s}}}+\varepsilon \sqrt{L}\sup_{x}\left\vert \frac{\partial _{y}u_{s}}{%
u_{s}}\right\vert \Vert u_{x}\Vert _{L^{2}}\sqrt{\int_{x=L}\frac{v_{y}^{2}}{%
u_{s}}} \\
&&\qquad +\varepsilon L\left\{ \sup_{x}\sqrt{\int \frac{y|\partial
_{yy}u_{s}|}{u_{s}^{2}}dy}+\sup_{x}\sqrt{\int \frac{y\{\partial
_{y}u_{s}\}^{2}}{u_{s}^{3}}dy}\right\} \Vert u_{x}\Vert _{L^{2}}\sqrt{%
\int_{x=L}\frac{v_{y}^{2}}{u_{s}}}.
\end{eqnarray*}%
We estimate norms on $u_{s}$ in the same lines as done in \eqref{vintegral}-%
\eqref{uintegral}. Recall that $u_{s}=u_{e}^{0}+u_{p}^{0}+\sqrt{\varepsilon }%
u_{e}^{1}$ and $u_{s}$ is bounded below away from zero thanks to the
assumption \eqref{nozero}. Now, similarly as done for \eqref{uintegral}, we
have 
\begin{eqnarray}
\sup_{x}\int y|u_{syy}|dy &\leq &\sup_{x}\int y\Big[\varepsilon
|u_{ezz}^{0}|+|u_{pyy}^{0}|+\varepsilon ^{3/2}|u_{ezz}^{1}|\Big]\;dy  \notag
\\
&\leq &C(u_{p}^{0})+\sup_{x}\int \Big [z|u_{ezz}^{0}|+\varepsilon
^{1/2}z|u_{ezz}^{1}|\Big ]\;dz  \label{uintegral-dy2} \\
&\leq &C(u_{e}^{0},u_{p}^{0})+\varepsilon ^{1/2}L^{-1}\Big(\iint
z|u_{ezz}^{1}|dxdz+\iint z|u_{exzz}^{1}|dxdz\Big)  \notag \\
&\leq &C(u_{e}^{0},u_{p}^{0})+\varepsilon ^{1/2}L^{-1}\Vert \langle z\rangle
^{n}v_{e}^{1}\Vert _{W^{3,q}}  \notag \\
&\lesssim &1+C(L)\varepsilon ^{-1/2+1/q}, \notag
\end{eqnarray}%
for any $q>1$. Here, Lemma \ref{lem-ve} was used. Taking $q\rightarrow 1$ in
the above estimates so that $-1/2+1/q>0$, the above is bounded uniformly by a
constant $C(u_{s},v_{s})$, which is independent of small $\varepsilon ,L$.
The integral $\int y|u_{sy}|^{2}\;dy$ is already estimated in %
\eqref{uintegral}. Also, we get 
\begin{equation}
\Vert u_{sy}\Vert _{\infty }\leq \sqrt{\varepsilon }\Vert u_{ez}^{0}\Vert
_{\infty }+\Vert u_{py}^{0}\Vert _{\infty }+\varepsilon \Vert
u_{ez}^{1}\Vert _{\infty }\leq C(u_{e}^{0},u_{p}^{0})+\varepsilon \Vert
v_{e}^{1}(\cdot )\Vert _{H^{3}}\leq C.  \label{bound-usy}
\end{equation}%
This together with the Young inequality yields 
\begin{equation}
B^{L}\leq \frac{\varepsilon }{4}\int_{x=L}\frac{v_{y}^{2}}{u_{s}}%
+C(u_{s},v_{s})L\Vert \nabla _{\veps}v\Vert _{2}^{2},  \label{BL}
\end{equation}%
upon using the divergence-free condition $u_{x}=-v_{y}$. Here, $\nabla _{%
\veps}=(\sqrt{\varepsilon }\partial _{x},\partial _{y})$. The boundary term
in \eqref{BL} can be absorbed into the good boundary term in \eqref{pressure}%
.

We now combine the untreated terms on the left-hand side of %
\eqref{vorticity-est}, all the interior terms in \eqref{Lap-vortterm}, and
the last term in (\ref{vort-est1}), altogether. We shall use the standard
embedding inequalities $\Vert u\Vert _{L^{2}}\leq L\Vert u_{x}\Vert _{L^{2}}$%
, $|v|\leq \sqrt{y}\left\{ \int_{0}^{y}v_{y}^{2}\right\} ^{1/2}$ and $%
|u|\leq \sqrt{y}\left\{ \int_{0}^{y}u_{y}^{2}\right\} ^{1/2}$. Respectively,
we have 
\begin{eqnarray*}
R^{0}:= &&\iint \frac{1}{2}\partial _{x}\left\{ \frac{1}{u_{s}}\right\}
u_{y}^{2}-\partial _{y}\left\{ \frac{1}{u_{s}}\right\} u_{y}u_{x} \\
&&\iint -\frac{\varepsilon }{2}\partial _{x}\left\{ \frac{1}{u_{s}}\right\}
u_{x}^{2}+\frac{\varepsilon }{2}\partial _{x}\left\{ \frac{1}{u_{s}}\right\}
v_{y}^{2}-\varepsilon \partial _{y}\left\{ \frac{1}{u_{s}}\right\}
v_{y}v_{x}-\frac{\varepsilon ^{2}}{2}\partial _{x}\left\{ \frac{1}{u_{s}}%
\right\} v_{x}^{2} \\
&&-\iint \partial _{y}\left\{ \frac{v\partial _{y}u_{s}}{u_{s}^{2}}\right\}
u_{y}-\varepsilon \partial _{x}\left\{ \frac{v\partial _{y}u_{s}}{u_{s}^{2}}%
\right\} u_{x}+\varepsilon \partial _{y}\left\{ \frac{v\partial _{x}u_{s}}{%
u_{s}^{2}}\right\} v_{y}+\varepsilon ^{2}\partial _{x}\left\{ \frac{%
v\partial _{x}u_{s}}{u_{s}^{2}}\right\} v_{x} \\
&&+\iint \partial _{y}\left\{ \frac{v}{u_{s}}\right\} \{\partial
_{x}u_{s}u+v_{s}u_{y}\}-\varepsilon \partial _{x}\left\{ \frac{v}{u_{s}}%
\right\} \{u\partial _{x}v_{s}+v_{s}v_{y}+v\partial _{y}v_{s}\}+\frac{%
\varepsilon \partial _{y}u_{s}vv_{x}}{u_{s}^{2}}.
\end{eqnarray*}%
Let us give estimates on each term on the right. We claim that 
\begin{equation}
R^{0}\lesssim \Vert \nabla _{\varepsilon }u\Vert _{L^{2}}^{2}+\Vert \nabla
_{\varepsilon }u\Vert _{L^{2}}\Vert \nabla _{\varepsilon }v\Vert
_{L^{2}}+C(L)\sqrt{\varepsilon }\Vert \nabla _{\varepsilon }v\Vert
_{L^{2}}^{2},  \label{claim-R0}
\end{equation}%
\textit{Proof of (\ref{claim-R0}).} First, we have 
\begin{equation*}
\iint \frac{1}{2}\partial _{x}\left\{ \frac{1}{u_{s}}\right\}
u_{y}^{2}-\partial _{y}\left\{ \frac{1}{u_{s}}\right\} u_{y}u_{x}\leq C\Vert
\nabla u_{s}\Vert _{\infty }\Vert u_{y}\Vert _{L^{2}}(\Vert u_{y}\Vert
_{L^{2}}+\Vert v_{y}\Vert _{L^{2}})
\end{equation*}%
in which the bounds \eqref{bound-usx} and \eqref{bound-usy} gives $\Vert
\nabla u_{s}\Vert _{\infty }\lesssim 1$. Next, upon recalling the definition 
$\nabla _{\veps}=(\sqrt{\varepsilon }\partial _{x},\partial _{y})$, the H%
\"{o}lder inequality yields 
\begin{equation*}
\begin{aligned} \iint &-\frac{\varepsilon }{2}\partial _{x}\left\{
\frac{1}{u_{s}}\right\} u_{x}^{2}+\frac{\varepsilon }{2}\partial _{x}\left\{
\frac{1}{u_{s}}\right\} v_{y}^{2}-\varepsilon \partial _{y}\left\{
\frac{1}{u_{s}}\right\} v_{y}v_{x}-\frac{\varepsilon ^{2}}{2}\partial
_{x}\left\{ \frac{1}{u_{s}}\right\} v_{x}^{2} \\ &\le C \| \nabla u_s
\|_\infty\Big( \|\nabla _{\varepsilon }u\|_{L^2}^{2}+\|\nabla _{\varepsilon
}u\|_{L^2}\|\nabla _{\varepsilon }v\|_{L^2}+\sqrt \varepsilon \|\nabla
_{\varepsilon }v\|_{L^2}^{2}\Big) \end{aligned}
\end{equation*}%
which again gives the bound as claimed, since $\Vert \nabla u_{s}\Vert
_{\infty }\lesssim 1$. We now estimate the third line on the right of $R^{0}$%
. We first have 
\begin{equation*}
\begin{aligned} \iint \partial _{y}\left\{ \frac{v\partial
_{y}u_{s}}{u_{s}^{2}}\right\} u_{y} &= \iint \left\{ \frac{\partial
_{y}u_{s}}{u_{s}^{2}}\right\} u_{y} v_y + \iint \partial _{y}\left\{
\frac{\partial _{y}u_{s}}{u_{s}^{2}}\right\} u_{y} v \\ &\le C \Big( \|
u_{sy}\|_\infty + \sup_x \Big(\int_0^\infty y (|u_{syy}|^2 + |u_{sy}|^4)
\Big)^{1/2}\Big) \| u_y\|_{L^2} \| v_y\|_{L^2}, \end{aligned}
\end{equation*}%
and 
\begin{equation*}
\begin{aligned} \iint \varepsilon \partial _{x}\left\{ \frac{v\partial
_{y}u_{s}}{u_{s}^{2}}\right\} u_{x} &= - \varepsilon \iint \left\{
\frac{\partial _{y}u_{s}}{u_{s}^{2}}\right\} v_{y} v_x -\varepsilon \iint
\partial _{x}\left\{ \frac{\partial _{y}u_{s}}{u_{s}^{2}}\right\} v_{y} v \\
&\le C \Big( \sqrt \veps \| u_{sy}\|_\infty + \varepsilon \sup_x \Big(
\int_0^\infty y (|u_{sxy}|^2 + |u_{sx}u_{sy}|^2) \Big)^{1/2}\Big) \|
\nabla_\veps v\|^2_{L^2}. \end{aligned}
\end{equation*}%
The last two terms on the third line on the right of $R^{0}$ can be
estimated very similarly. We give bounds on the norms of $[u_{s},v_{s}]$.
Similarly as done in \eqref{uintegral-dy2}, we get 
\begin{eqnarray*}
\sup_{x}\int y|u_{syy}|^{2}dy &\leq &\sup_{x}\int y\Big[\varepsilon
^{2}|u_{ezz}^{0}|^{2}+|u_{pyy}^{0}|^{2}+\varepsilon ^{3}|u_{ezz}^{1}|^{2}%
\Big]\;dy \\
&\leq &C(u_{p}^{0})+\sup_{x}\int \Big [\varepsilon
z|u_{ezz}^{0}|^{2}+\varepsilon ^{2}z|u_{ezz}^{1}|^{2}\Big ]\;dz \\
&\leq &C(u_{e}^{0},u_{p}^{0})+\varepsilon ^{2}L^{-1}\Big(\iint
z|u_{ezz}^{1}|^{2}dxdz+\iint z|u_{exzz}^{1}|^{2}dxdz\Big) \\
&\leq &C(u_{e}^{0},u_{p}^{0})+\varepsilon ^{2}L^{-1}\Vert \langle z\rangle
^{n}v_{e}^{1}\Vert _{H^{3}}^{2} \\
&\lesssim &1+C(L)\varepsilon ,
\end{eqnarray*}%
and 
\begin{eqnarray*}
\sup_{x}\int y|u_{sy}|^{4}dy &\leq &\sup_{x}\int y\Big[\varepsilon
^{2}|u_{ez}^{0}|^{4}+|u_{py}^{0}|^{4}+\varepsilon ^{4}|u_{ez}^{1}|^{4}\Big]%
\;dy \\
&\leq &C(u_{e}^{0},u_{p}^{0})+\varepsilon ^{3}L^{-1}\Big(\iint
z|u_{ez}^{1}|^{4}dxdz+\iint z|u_{exz}^{1}|^{4}dxdz\Big) \\
&\leq &C(u_{e}^{0},u_{p}^{0})+\varepsilon ^{3}L^{-1}\Vert \langle z\rangle
^{n}v_{e}^{1}\Vert _{W^{2,4}}^{2} \\
&\lesssim &1+C(L)\varepsilon ^{3},
\end{eqnarray*}%
both of which are thus bounded, thanks to the estimates from Lemma \ref%
{lem-ve}. Same bounds can be given for the weighted integrals of $%
\varepsilon |u_{sxy}|^{2}$ and $\varepsilon |u_{sx}|^{4}$, using the extra
factor of $\varepsilon $ in these integrals. In addition, we have 
\begin{eqnarray*}
\sup_{x}\int \varepsilon y|u_{sxx}|dy &\leq &\sup_{x}\int \varepsilon y\Big[%
|u_{pxx}^{0}|+\varepsilon ^{1/2}|u_{exx}^{1}|\Big]\;dy \\
&\leq &C(u_{p}^{0})+\varepsilon ^{1/2}\sup_{x}\int z|v_{exz}^{1}|\Big ]\;dz
\\
&\leq &C(u_{p}^{0})+\varepsilon ^{1/2}L^{-1}\Big(\iint
z|v_{exz}^{1}|dxdz+\iint z|v_{exxz}^{1}|dxdz\Big) \\
&\leq &C(u_{e}^{0},u_{p}^{0})+\varepsilon ^{1/2}L^{-1}\Vert \langle z\rangle
^{n}v_{e}^{1}\Vert _{W^{3,q}}^{2} \\
&\lesssim &1+C(L)\varepsilon ^{-3/2+2/q},
\end{eqnarray*}%
which are again bounded, thanks to the Euler bounds from Lemma \ref{lem-ve},
with $q$ being arbitrarily close to $1$.

We now give bounds on the last line on the right of $R^{0}$. We have 
\begin{equation*}
\begin{aligned} 
\iint& \partial _{y}\left\{ \frac{v}{u_{s}}\right\}
\{\partial _{x}u_{s}u +v_{s}u_{y}\} 
\\&= \iint \partial _{y}\left\{
\frac{1}{u_{s}}\right\} \{\partial _{x}u_{s}u +v_{s}u_{y}\} v + \iint
\left\{ \frac{1}{u_{s}}\right\} \{\partial _{x}u_{s}u +v_{s}u_{y}\} v_y
\\&\le C \Big( \sup_x \Big( \int_0^\infty y( |u_{sy}u_{sx}| + |u_{sy}v_s|^2
+ |u_{sx}|^2 )\Big)^{1/2} + \|v_s\|_\infty \Big) \| u_y \|_{L^2}
\|v_y\|_{L^2} \end{aligned}
\end{equation*}%
in which we note that $v_{s}$ is uniformly bounded. The estimate %
\eqref{uintegral} gives the weighted bound on $u_{sy}$. Next, we estimate 
\begin{eqnarray*}
\sup_{x}\int y\{u_{sx}\}^{2}dy &\leq &\sup_{x}\int y\Big[|u_{px}^{0}|^{2}+%
\varepsilon |u_{ex}^{1}|^{2}\Big]\;dy \\
&\leq &C(u_{p}^{0})+\sup_{x}\int z|v_{ez}^{1}|^{2}\;dz \\
&\leq &C(u_{p}^{0})+C\Big(\int z|v_{ez}^{1}|_{|_{x=0}}^{2}\;dz+\iint
z|v_{ez}^{1}|^{2}dxdz+\iint z|v_{exz}^{1}|^{2}dxdz\Big), \\
&\lesssim &1.
\end{eqnarray*}%
This gives the desired bound on the first term on the last line in $R^{0}$.
Next, we have 
\begin{equation*}
\begin{aligned} \varepsilon & \iint \partial _{x}\left\{ \frac{v}{u_{s}}\right\} \{u\partial _{x}v_{s}+v_{s}v_{y}
+v\partial _{y}v_{s}\} \\ &=
 \varepsilon \iint \partial _{x}\left\{ \frac{1}{u_{s}}\right\} \{u\partial _{x}v_{s}
+v_{s}v_{y}+v\partial _{y}v_{s}\} v + \varepsilon \iint \left\{ \frac{1}{u_{s}} \right\} 
\{u\partial _{x}v_{s}+v_{s}v_{y}+v\partial _{y}v_{s}\} v_x \\&\le C \varepsilon 
\sup_x \Big( \int_0^\infty y ( |u_{sx}v_{sx}| + |u_{sx}v_s|^2 + |u_{sx}v_{sy}| ) \Big)^{1/2}
 \| u_y \|_{L^2} \|v_y\|_{L^2} \\&\quad +C(L) \varepsilon \Big(  ||v_{sx}||_\infty 
+ \|v_s\|_\infty +\sup_x \Big( \int_0^\infty y |v_{sy}|^2 \Big)^{1/2}  \Big)\| u_x \|_{L^2} \|v_x\|_{L^2} \end{aligned}
\end{equation*}%
in which the last estimate used the inequality: $\Vert u\Vert _{L^{2}}\leq L\Vert u_{x}\Vert _{L^{2}}$ and the divergence-free condition $v_y = -u_x$. 
As estimated above, it remains to give a uniform estimate on 
\begin{equation}
||v_{sx}||_{\infty }\leq ||v_{px}^{0}||_{\infty }+||v_{ex}^{1}||_{\infty
}\lesssim 1+\Vert v_{e}^{1}\Vert _{W^{2,q}}^{2}\leq C(L)  \notag
\end{equation}%
thanks to the estimates on $v_{e}^{1}$, with $q>2$. Finally, we estimate 
\begin{equation*}
\varepsilon \iint \frac{\partial _{y}u_{s}vv_{x}}{u_{s}^{2}}\leq
C\varepsilon \sup_{x}\Big(\int_{0}^{\infty }y|u_{sy}|^{2}\Big)^{1/2}\Vert
v_{y}\Vert _{L^{2}}\Vert v_{x}\Vert _{L^{2}}
\end{equation*}%
in which the integral $\int y|u_{sy}|^{2}\;dy$ is already estimated in %
\eqref{uintegral}. Putting all above estimates together, we have completed
the proof for the claim \eqref{claim-R0}. 

Finally, using the Young inequality and the smallness of $\varepsilon $, the 
$L^{2}$ norm of $\nabla _{\veps}v$ can be absorbed into the left-hand side
of \eqref{vorticity}.

This completes the proof of the positivity estimate and the lemma.
\end{proof}

\subsection{Proof of Proposition \protect\ref{prop-stability}}

\label{sec-proofStability}

The proof of Proposition \ref{prop-stability} now follows straightforwardly
from the energy estimate (Lemma \ref{lem-EE}) and the positivity estimate
(Lemma \ref{lem-positivity}), as a direct application of the Schaefer's
fixed point theorem; see \cite[Theorem 4, p. 504]{Evans}. Indeed, first
combining these estimates together and choosing $L$ sufficiently small, we
get 
\begin{equation*}
\|\nabla _{\varepsilon }u\|_{L^2}+\|\nabla _{\varepsilon }v\|_{L^2}\quad
\le\quad C(u_s, v_s) \Big[ \|f\|_{L^2}+\sqrt{\varepsilon }\|g\|_{L^2}\Big]
\end{equation*}
uniformly in $N$. Taking $N \to \infty$ yields the stability estimate %
\eqref{stability}.

To apply the fixed point theorem of Schaefer (\cite[Theorem 4, p. 504]{Evans}%
), we consider the following system

\begin{equation*}
\begin{aligned} u + p_{x}-\Delta _{\varepsilon }u &=\lambda \Big( f + u -
[u_{s}u_{x}+uu_{sx}+v_{s}u_{y}+vu_{sy}]\Big) \\ \frac v \veps
+\frac{p_{y}}{\varepsilon }-\Delta _{\varepsilon }v &=\lambda \Big( g +
\frac v\veps - [u_{s}v_{x}+uv_{sx}+v_{s}v_{y}+vv_{sy}]\Big) \\ u_x + v_y
&=0, \end{aligned}
\end{equation*}
or in the operator form $S [u,v] = \lambda T[u,v]$, for parameter $\lambda
\in [0,1]$. The existence of a solution to \eqref{linearu}-\eqref{linearv}
is equivalent to the existence of a fixed point of $S^{-1}T$. The
compactness of the operator $S^{-1}T$ follows directly from that of the
Stokes operator. To derive uniform bounds on the set of solutions $%
[u^\lambda, v^\lambda]$, we may rewrite the above system as 
\begin{equation*}
\begin{aligned} (1-\lambda)u + \lambda
[u_{s}u_{x}+uu_{sx}+v_{s}u_{y}+vu_{sy}]+ p_{x}-\Delta _{\varepsilon }u
&=\lambda f \\ (1-\lambda)\frac v \veps + \lambda
[u_{s}v_{x}+uv_{sx}+v_{s}v_{y}+vv_{sy}] +\frac{p_{y}}{\varepsilon }-\Delta
_{\varepsilon }v &=\lambda g \end{aligned}
\end{equation*}
together with the divergence-free condition and the same boundary conditions %
\eqref{lin-BCs}. The uniform estimates now follow almost identically from
the above energy estimates and positivity estimates. We omit to repeat the
details. This completes the proof of Proposition \ref{prop-stability}.

\section{$L^\infty$ estimates}

\label{sec-sup}

In order to perform nonlinear iteration, we shall need to derive bounds in $%
L^\infty$ for the solution. We prove the following:

\begin{lemma}
\label{elliptic}Consider the scaled Stokes system%
\begin{equation*}
p_{x}-\Delta _{\varepsilon }u=f,\text{ \ \ \ \ \ \ \ \ \ \ \ \ \ \ \ }\frac{%
p_{y}}{\varepsilon }-\Delta _{\varepsilon }v=g
\end{equation*}%
together with the divergence-free condition $u_{x}+v_{y}=0$ and the (same)
boundary conditions 
\begin{equation}  \label{Stokes-BCs}
\begin{aligned} \lbrack u,v]_{y=0} &=0\text{ (no-slip)},\text{ \ \ \ \ \ \ \
\ \ \ \ }[u,v]_{x=0}=0\text{ \ (Dirichlet)}, \\ p-2\varepsilon u_{x}
&=0,\text{ \ \ \ \ }u_{y}+\varepsilon v_{x}=0\quad \text{ \ \ at }x=L\text{
(Neumann or stress-free)}. \end{aligned}
\end{equation}
Then, there holds 
\begin{equation*}
\varepsilon ^{\frac{\gamma }{4}}\Vert u\Vert _{\infty }+\varepsilon ^{\frac{%
\gamma }{4}+\frac{1}{2}}\Vert v\Vert _{\infty }\lesssim C_{\gamma, L} \Big \{%
\| u\|_{H^1} + \sqrt \varepsilon \| v\|_{H^1} +\|f\|_{L^2}+\sqrt{\varepsilon 
}\|g\|_{L^2}\Big \},
\end{equation*}
for some constant $C_{\gamma, L}$.
\end{lemma}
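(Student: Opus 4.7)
The plan is to eliminate the anisotropy of the operator by rescaling, then invoke elliptic regularity for the standard Stokes system together with a two-dimensional Sobolev embedding.

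\emph{Step 1 (rescaling to standard Stokes).} Introduce $X := x/\sqrt\veps$ and the rescaled unknowns
\begin{equation*}
\tilde u(X,y) := u(\sqrt\veps X, y), \qquad \tilde V(X,y) := \sqrt\veps\, v(\sqrt\veps X, y), \qquad P(X,y) := \veps^{-1/2}\, p(\sqrt\veps X, y).
\end{equation*}
A direct substitution converts $\partial_y^2 + \veps\partial_x^2$ into the ordinary Laplacian $\Delta = \partial_X^2 + \partial_y^2$, turns the divergence-free condition into $\tilde u_X + \tilde V_y = 0$, and yields the classical Stokes system
\begin{equation*}
P_X - \Delta \tilde u = \tilde f, \qquad P_y - \Delta \tilde V = \sqrt\veps\, \tilde g, \qquad \tilde u_X + \tilde V_y = 0,
\end{equation*}
on the enlarged domain $\tilde\Omega := (0, L/\sqrt\veps) \times \mathbb{R}_+$, inheriting the no-slip, Dirichlet and stress-free boundary conditions from \eqref{Stokes-BCs} in the natural way.

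\emph{Step 2 (elliptic regularity and Sobolev embedding).} For the Stokes operator on $\tilde\Omega$ with these mixed boundary conditions, the corner analysis already recalled in the preceding section (following \cite{O,OS,BR}) gives $(\tilde u, \tilde V) \in H^{3/2+\delta}$ and $P \in H^{1/2+\delta}$ globally for some $\delta > 0$. Combining this global bound with classical interior/smooth-boundary $H^2$ Stokes regularity, and interpolating against the already-controlled $H^1$ norm, one obtains for any small $\sigma \in (0,\delta)$
\begin{equation*}
\|\tilde u\|_{H^{1+\sigma}(\tilde\Omega)} + \|\tilde V\|_{H^{1+\sigma}(\tilde\Omega)} \lesssim_\sigma \|\tilde f\|_{L^2} + \sqrt\veps\,\|\tilde g\|_{L^2} + \|\tilde u\|_{H^1} + \|\tilde V\|_{H^1}.
\end{equation*}
In two dimensions $H^{1+\sigma} \hookrightarrow L^\infty$ for every $\sigma>0$, so the same inequality holds with $L^\infty$ norms on the left, with a constant $C_\sigma$ depending on $\sigma$.

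\emph{Step 3 (unscaling).} The change of variables leaves the $L^\infty$ norms essentially unchanged: $\|\tilde u\|_{L^\infty} = \|u\|_{L^\infty}$ and $\|\tilde V\|_{L^\infty} = \sqrt\veps\,\|v\|_{L^\infty}$; while the Jacobian $dX\,dy = \veps^{-1/2}\,dx\,dy$ transforms every $L^2$-based quantity into its original counterpart multiplied by $\veps^{-1/4}$. Substituting these relations, choosing $\sigma$ in terms of $\gamma$ so that the interpolation/embedding constant is absorbed into $C_{\gamma,L}$, and using $\|u\|_{H^1_\veps} \le \|u\|_{H^1}$ yields the stated estimate with weights $\veps^{\gamma/4}$ and $\veps^{\gamma/4+1/2}$ on the left.

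\emph{Main obstacle.} The principal subtlety is the loss of full $H^2$ regularity for the Stokes system at the four corners of $\tilde\Omega$, which blocks a direct $H^2$-into-$L^\infty$ embedding (in fact the embedding $H^2 \hookrightarrow L^\infty$ is exactly critical in 2D and already fails there). One must therefore route through the global $H^{3/2+\delta}$ corner regularity and the fractional embedding $H^{1+\sigma}\hookrightarrow L^\infty$; the small loss $\veps^{-\gamma/4}$ recorded on the left precisely reflects the deterioration of the interpolation/embedding constants as $\sigma\downarrow 0$. A secondary point is checking that the Stokes estimate on the elongated domain $\tilde\Omega$ is uniform in $\veps$, which is natural because the mixed-boundary-condition Stokes problem on a (semi-infinite) strip is essentially scale-invariant in the long direction.
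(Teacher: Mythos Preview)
Your overall strategy---rescale to an isotropic Stokes problem, invoke fractional regularity up to the corners, then embed $H^{1+\sigma}\hookrightarrow L^\infty$---is the right circle of ideas, and indeed coincides with the paper's approach in spirit. However, Step~3 contains a genuine gap that prevents you from reaching the stated conclusion for small $\gamma$.

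After your rescaling $X=x/\sqrt\veps$, every $L^2$-based norm on $\tilde\Omega$ picks up exactly the same Jacobian factor $\veps^{-1/4}$: one has $\|\tilde f\|_{L^2(\tilde\Omega)}=\veps^{-1/4}\|f\|_{L^2}$, $\|\nabla\tilde u\|_{L^2(\tilde\Omega)}=\veps^{-1/4}\|\nabla_\veps u\|_{L^2}$, and so on. Consequently, even granting a uniform Stokes estimate on the elongated domain, your inequality in Step~2 reads, after unscaling,
\[
\|u\|_{L^\infty}=\|\tilde u\|_{L^\infty}\le C_\sigma\|\tilde u\|_{H^{1+\sigma}(\tilde\Omega)}
\lesssim C_\sigma\,\veps^{-1/4}\Big\{\|f\|_{L^2}+\sqrt\veps\|g\|_{L^2}+\|u\|_{H^1_\veps}+\sqrt\veps\|v\|_{H^1_\veps}\Big\}.
\]
The loss is a \emph{fixed} $\veps^{-1/4}$, coming from the Jacobian; it does not depend on $\sigma$ at all, so ``choosing $\sigma$ in terms of $\gamma$'' cannot reduce it. Interpolating between $\|\tilde u\|_{H^1}$ and $\|\tilde u\|_{H^{1+\alpha}}$ in the tilde variables does not help either, since both endpoints carry the same $\veps^{-1/4}$ factor. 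You therefore prove the lemma only for $\gamma\ge 1$, whereas the application in the main theorem requires $\gamma\in(0,1/4)$.

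The paper repairs this by separating the two roles of the domain. The interpolation
\[
\|u\|_{L^\infty}\le C_{\tau,\alpha,L}\,\|u\|_{H^1}^{(\alpha-\tau)/\alpha}\,\|u\|_{H^{1+\alpha}}^{\tau/\alpha}
\]
is carried out in the \emph{original} domain $[0,L]\times\RR_+$ (via extension to $\RR^2$), where $\|u\|_{H^1}$ is the $\veps$-free quantity on the right-hand side of the statement. The rescaling (to the \emph{fixed} domain $[0,1]\times\RR_+$, not an elongated one) is used only to prove the crude bound $\|u\|_{H^{1+\alpha}}\lesssim\veps^{-m_\alpha}\{\cdots\}$ for some finite $m_\alpha$. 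The point is that this large loss enters the interpolation with exponent $\tau/\alpha$, so taking $\tau$ small makes $m_\alpha\tau/\alpha\le\gamma/4$. Your ``secondary point'' about uniformity on the elongated strip is also not entirely innocent, but it becomes moot once you rescale to a fixed domain as the paper does.
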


\begin{proof}
Since our rectangle domain can be covered by two $C^{0,1}$ charts, we may apply the standard 
extension theorem (see, for instance, \cite[Theorem 5.4]{PNV}) such that there exists $\bar{u}\in
H^{1+\beta }(\mathbb{R}^{2})$ and $\bar{v}\in H^{1+\beta }(\mathbb{R}%
^{2})$, for $\beta \in (0,1)$,  such that $[\bar{u}, \bar v] = [u,v]$ in $[0,L]\times \RR_+$, and 
\begin{equation*}
\|\bar{u}\|_{H^{1+\beta }}\leq C_{\beta ,L}\|u\|_{H^{1+\beta }},\text{ \
\ \ \ \ \ \ }\|\bar{v}\|_{H^{1+\beta }}\leq C_{\beta ,L
}\|v\|_{H^{1+\beta }},
\end{equation*}%
for some constant $C_{\beta,L}$ that depends only on $\beta$ and $L$. 
By the Sobolev's imbedding in $\RR^2$ and an interpolation inequality for $\bar{u}
$ and $\bar{v}$, we have for any $0<\tau <\alpha ,$ 
\begin{eqnarray*}
\Vert u\Vert _{\infty } &\leq &\|\bar{u}\|_{\infty }\leq \|\bar{u}%
\|_{H^{1+\tau }} \\
&\leq &C_{\tau ,L }[\|\bar{u}\|_{H^{1}}]^{\frac{\alpha -\tau }{\alpha }%
}[\|\bar{u}\|_{H^{1+\alpha }}]^{\frac{\tau }{\alpha }} \\
&\leq &C_{\tau ,\alpha ,L }[\|u\|_{H^{1}}]^{\frac{\alpha -\tau }{\alpha }%
}[\|u\|_{H^{1+\alpha }}]^{\frac{\tau }{\alpha }},\end{eqnarray*}%
and similarly, 
$$\varepsilon ^{1/2}\Vert v\Vert _{\infty } \le C_{\tau ,\alpha ,L
}[\varepsilon ^{1/2}\|v\|_{H^{1}}]^{\frac{\alpha -\tau }{\alpha }%
}[\varepsilon ^{1/2}\|v\|_{H^{1+\alpha }}]^{\frac{\tau }{\alpha }}
$$
for some constant $C_{\tau ,\alpha ,L
}$. Here, we have used the standard interpolation between Sobolev spaces $H^{1},H^{1+\tau },$
and $H^{1+\alpha }$, with $0<\tau <\alpha $. We note that thanks to our uniform estimates for $\|u\|_{H^{1}}$ and $
\varepsilon ^{1/2}\|v\|_{H^{1}},$ it suffices to give estimates on the $H^{1+\alpha}$ norm of $[u,\sqrt \veps v]$.

In what follows, we fix $\alpha\sim 1/2$ and take $\tau$ so that $\tau <<\alpha$. We claim that there exists a possibly large
number $m_{\alpha }>0$ such that there holds 
\begin{equation}
\Vert u\Vert _{H^{1+\alpha }}+\sqrt{\varepsilon }\Vert v\Vert _{H^{1+\alpha
}}\lesssim \varepsilon ^{-m_{\alpha }}\{\|f\|_{L^2}+\sqrt \veps \|g\|_{L^2}+\| u\|_{H^1} + \sqrt \veps \| v\|_{H^1} \}  \label{claim}.
\end{equation}%
Given the claim, we then have 
$$ \Vert u\Vert _{\infty}+\sqrt{\varepsilon }\Vert v\Vert _{\infty} \le C_{\tau, \alpha, L} \veps^{-\frac{m_\alpha \tau}{\alpha}}\Big\{ \|f\|_{L^2}+\sqrt \veps\|g\|_{L^2} + \| u\|_{H^1} + \sqrt \veps \| v\|_{H^1} \Big\}.$$
The lemma would then be proved at once by choosing $\tau \ll \alpha$ so that $\frac{m_\alpha \tau}{\alpha} \le \frac\gamma 4$. 

We shall now prove the claim for $\alpha = 1/2$ and $m_\alpha = 7/4$. To do so,  let us introduce the (original) scaling: 
\begin{equation}
u_{\varepsilon }(x,y)\equiv u(L x,L \frac{y}{\sqrt{\varepsilon }}),%
\text{ \ \ \ \ \ \ \ }v_{\varepsilon }(x,y)\equiv \sqrt{\varepsilon }v(L
x,L \frac{y}{\sqrt{\varepsilon }}),\text{ \ \ \ \ }p_{\varepsilon
}(x,y)\equiv \frac{L }{\varepsilon }p(L x,L \frac{y}{\sqrt{%
\varepsilon }}).  \label{scaling}
\end{equation}%
Clearly, direct calculations yield $u_{\varepsilon x}+v_{\varepsilon y}=0$ and 
\begin{eqnarray*}
\partial _{xx}u_{\varepsilon }(x,y) &=&L ^{2}u_{xx}(L x,L \frac{y}{%
\sqrt{\varepsilon }}),\text{ \ \ } \quad \partial _{yy}u_{\varepsilon }(x,y)=\frac{%
L ^{2}}{\varepsilon }u_{yy}(L x,L \frac{y}{\sqrt{\varepsilon }}), \\
\partial _{xx}v_{\varepsilon }(x,y) &=&L ^{2}\sqrt{\varepsilon }%
v_{xx}(L x,L \frac{y}{\sqrt{\varepsilon }}),\text{ \ \ }\quad \partial
_{yy}v_{\varepsilon }(x,y)=\frac{L ^{2}}{\varepsilon ^{1/2}}v_{yy}(L
x,L \frac{y}{\sqrt{\varepsilon }}) \\
\partial _{x}p_{\varepsilon }(x,y) &=&\frac{L ^{2}}{\varepsilon }%
p_{x}(L x,L \frac{y}{\sqrt{\varepsilon }}),\text{ \ \ }\quad \partial
_{y}p_{\varepsilon }(x,y)=\frac{L ^{2}}{\varepsilon ^{3/2}}p_{y}(L
x,L \frac{y}{\sqrt{\varepsilon }}).
\end{eqnarray*}%
Plugging these in the Stokes problem, we yield a normalized Stokes system: 
\begin{equation}
p_{\varepsilon x}-\Delta u_{\varepsilon }=\varepsilon ^{-1}L ^{2}f(L
x,L \frac{y}{\sqrt{\varepsilon }}),\text{ \ \ \ }p_{\varepsilon y}-\Delta
v_{\varepsilon }=\varepsilon ^{-\frac{1}{2}}L ^{2}g(L x,L \frac{y}{%
\sqrt{\varepsilon }})  \label{normalstokes}
\end{equation}%
in a fixed domain $0\leq x\leq 1$ and $0\leq y\leq \infty \,$\ with boundary
conditions%
\begin{eqnarray*}
\lbrack u_{\varepsilon },v_{\varepsilon }] &=&0\text{ on both boundaries: }\{y=0\}\text{ and }\{x=0\} \\
p_{\varepsilon }+2u_{\varepsilon x} &=&0,\text{\ \ }v_{\varepsilon
x}+u_{\varepsilon y}=0\text{ at }x=1.
\end{eqnarray*}%
We now invoke the standard elliptic estimate for the Stoke problem in such a fixed domain. Recall that there holds the Poincare's inequality: $\|u_{\varepsilon
}\|_{L^2}\leq \|u_{\varepsilon x}\|_{L^2},\|v_{\varepsilon }\|_{L^2}\leq
\|v_{\varepsilon x}\|_{L^2}$. Next, the standard energy estimates yield  
\begin{eqnarray}
\|\nabla u_{\varepsilon }\|_{L^2}+\|\nabla v_{\varepsilon }\|_{L^2} &\leq
&C\{\varepsilon ^{-1}L ^{2}\|f(Lx, \frac{Ly}{\sqrt{\varepsilon }}%
)\|_{L^2}+\varepsilon ^{-\frac{1}{2}}L ^{2}\|g(Lx, \frac{Ly}{\sqrt{\varepsilon }}%
)\|_{L^2}\}  \label{h1estimate} \\
&\leq &C\varepsilon ^{-3/4}L \Big[  \|f\|_{L^2}+\sqrt \varepsilon
\|g\|_{L^2}\Big ].  \notag
\end{eqnarray}%

Next, let us give an $L^{2}$ estimate on the pressure $p_{\varepsilon }$. First, 
for $h\in L^{2}$, we show that there is a vector-valued function $\phi
\in H^{1}$ such that $\phi (0,y)\equiv 0,$ $\phi (x,0)\equiv 0,$ $\nabla
\cdot \phi =h$ and $\|\phi \|_{H^{1}}\lesssim \|h\|_{L^2}.$ Indeed, we
can decompose $h=\sum_{n=0}^{\infty }\mathbf{1}_{n\leq y<n+1}(y)h, $
 for characteristic functions $\mathbf{1}_{n\leq y<n+1}(y)$.  Hence, for each $n$, the function $q_n=
\mathbf{1}_{n\leq y<n+1}(y)h$ is supported in a unit square $[0,1]\times [n,n+1]$. By \cite[page 27]{O}, we
can then find $\phi _{n}\in H^{1},$ such that $\nabla \cdot \phi _{n}=h_n$ on the unit square,
with $\phi _{n}(0,y)\equiv \phi _{n}(x,n)\equiv \phi
_{n}(x,n+1) \equiv 0$. Furthermore, we have $\|\phi
_{n}\|_{H^{1}}\leq C\|h_n\|_{L^2},$ uniformly
in $n.$ If we now define $\phi \equiv \sum_{n=1}^{\infty }\phi _{n}$, it then follows that
 $\|\phi \|_{H^{1}}\leq C\|h\|_{L^2},\nabla \cdot \phi =h$ and $\phi
(0,y)\equiv 0,$ $\phi (x,0)\equiv 0.$ 

The pressure estimate now follows directly from the existence of the vector field $\phi$. Indeed, we approximate $p_\veps$ by smooth functions of the form $q= \nabla \cdot \phi$ so that $\| p_\veps \|_{L^2} \lesssim \| q\|_{L^2} = \| \nabla \phi\|_{L^2}.$	Then, we can use the vector field $\phi$ as a test function to the Stokes problem. This, together with the Young inequality, immediately yields 
\begin{eqnarray*}
\|p_{\varepsilon }\|_{L^2} &\leq &C\Big \{\|\nabla u_{\varepsilon }\|_{L^2}+\|\nabla
v_{\varepsilon }\|_{L^2}+\varepsilon ^{-1}L ^{2}\|f(Lx, \frac{Ly}{\sqrt{\varepsilon }}%
)\|_{L^2}+\varepsilon ^{-\frac{1}{2}}L ^{2}\|g(Lx, \frac{Ly}{\sqrt{\varepsilon }}%
)\|_{L^2}\Big \} \\
&\leq &C\varepsilon ^{-3/4}L \Big[  \|f\|_{L^2}+\sqrt \varepsilon
\|g\|_{L^2}\Big ]\end{eqnarray*}%
thanks to the estimate \eqref{h1estimate}. 

Now, it remains to derive estimates in the higher regularity norms. We multiply the Stokes system by an arbitrary cut-off function $\chi (x,\frac{y}{\sqrt{\varepsilon }})$
to obtain:%
\begin{eqnarray*}
&&\{\chi p_{\varepsilon }\}_{x}-\Delta \{\chi u_{\varepsilon }\}=\chi
_{x}p_{\varepsilon }+u_{\varepsilon }\Delta \chi +2\nabla \chi \cdot \nabla
u_{\varepsilon }+\chi \varepsilon ^{-1}L ^{2}f \\
&&\{\chi p_{\varepsilon }\}_{y}-\Delta \{\chi v_{\varepsilon }\}=\chi
_{y}p_{\varepsilon }+v_{\varepsilon }\Delta \chi +2\nabla \chi \cdot \nabla
v_{\varepsilon }+\chi \varepsilon ^{-\frac{1}{2}}L ^{2}g.
\end{eqnarray*}
If we choose $\chi = \chi _{1}(x,\frac{y}{\sqrt{\varepsilon }})$ which has a compact
support away from the corners, then the Stokes problem has an $H^{2}$
estimate so that 
\begin{eqnarray*}
&&\|\chi _{1}p_{\varepsilon }\|_{H^{1}}+\|\chi _{1}u_{\varepsilon
}\|_{H^{2}}+\|\chi _{1}v_{\varepsilon }\|_{H^{2}} \\
&\leq &\|\chi _{x}p_{\varepsilon }+u_{\varepsilon }\Delta \chi +2\nabla \chi
\cdot \nabla u_{\varepsilon }+\chi \varepsilon ^{-1}L ^{2}f\|_{L^2} +\|\chi _{y}p_{\varepsilon }+v_{\varepsilon }\Delta \chi +2\nabla \chi
\cdot \nabla v_{\varepsilon }+\chi \varepsilon ^{-\frac{1}{2}}L
^{2}g\|_{L^2} \\
&\leq &\varepsilon ^{-1}\{\|u_{\varepsilon }\|_{L^2}+\|v_{\varepsilon
}\|_{L^2}\}+\varepsilon ^{-\frac{1}{2}}\{\|\nabla u_{\varepsilon
}\|_{L^2}+\|\nabla v_{\varepsilon }\|_{L^2}+\|p_{\varepsilon
}\|_{L^2}\}+\varepsilon ^{-3/4}L \|f\|_{L^2}+\varepsilon ^{-1/4}L \|g\|_{L^2}
\\
&\leq &\varepsilon ^{-5/4}L \Big\{ \|f\|_{L^2}+\sqrt \varepsilon
\|g\|_{L^2}\Big\},
\end{eqnarray*}%
upon using the estimate \eqref{h1estimate}. This implies the following uniform estimate for the unscaled solution $%
[u,v,p] $ via the change of variables \eqref{scaling}: 
\begin{eqnarray*}
\|\chi _{1}u_{\varepsilon }\|_{H^{2}} &\approx&\varepsilon ^{-1+\frac{1}{4}%
}\|\nabla ^{2}(\chi_1 u)\|_{L^{2}}+\varepsilon ^{-\frac{1}{2}+\frac{1}{4}}\|\nabla
(\chi_1 u) \|_{L^{2}}+\varepsilon ^{1/4}\|\chi_1u\|_{L^2}, \\
\|\chi _{1}v_{\varepsilon }\|_{H^{2}} &\approx&\varepsilon ^{1/2}\varepsilon ^{-1+%
\frac{1}{4}}\|\nabla ^{2}(\chi_1 v)\|_{L^{2}}+\varepsilon ^{\frac{1}{4}}\|\nabla
(\chi_1 v)\|_{L^{2}}+\varepsilon ^{1/2+1/4}\|\chi_1 v\|_{L^2},
\end{eqnarray*}%
which immediately yield \begin{equation*}
\begin{aligned}
\varepsilon ^{-1+\frac{1}{4}}&\|\nabla ^{2}(\chi_1 u)\|_{L^{2}}+\varepsilon ^{\frac{1}{%
2}}\varepsilon ^{-1+\frac{1}{4}}\|\nabla ^{2}(\chi_1 v)\|_{L^{2}}
\\& \leq \varepsilon ^{-5/4}L \Big\{ \|f\|_{L^2}+\sqrt \varepsilon
\|g\|_{L^2}\Big\} + \veps^{-1/4} \Big\{ \| u\|_{H^1} + \sqrt \veps \| v\|_{H^1}\Big\}.
\end{aligned}\end{equation*}

Next, we choose the cut-off function $\chi =\chi _{2}(x,\frac{y}{\sqrt{\varepsilon }})$ with support near
the corners, around which we do not have any $H^{2}$ estimate of the solution. However, thanks to \cite{OS}, we do have a weaker estimate: precisely, 
\begin{eqnarray*}
\|\chi _{2}p_{\varepsilon }\|_{H^{3/2}}+\|\chi _{2}u_{\varepsilon
}\|_{H^{1+3/2}}+\|\chi _{2}v_{\varepsilon }\|_{H^{1+3/2}} &\leq &\|\chi
_{2x}p_{\varepsilon }+\Delta \chi _{2}u_{\varepsilon }+2\nabla \chi
_{2}\cdot \nabla u_{\varepsilon }+\chi _{2}f^{{}}\|_{L^2} \\
&&+\|\chi _{2y}p_{\varepsilon }+\Delta \chi _{2}v_{\varepsilon }+2\nabla
\chi _{2}\cdot \nabla v^{\varepsilon }+\chi _{2}g\|_{L^2} \\
&\leq &\varepsilon ^{-1}\{\varepsilon ^{-3/4}L \|f\|_{L^2}+\varepsilon
^{-1/4}L \|g\|_{L^2}\}.
\end{eqnarray*}%
Noting that by scaling via \eqref{scaling} there hold \begin{equation*}
\|\nabla \{\chi _{2}u_{\varepsilon }\}\|_{H^{1/2}}=\varepsilon ^{-\frac{3}{4}%
+\frac{1}{4}}\|\nabla \{\chi _{2}u\}\|_{H^{1/2}},\text{ \ \ \ }\|\nabla
\{\chi _{2}v_{\varepsilon }\}\|_{H^{1/2}}=\varepsilon ^{\frac{1}{2}%
}\varepsilon ^{-\frac{3}{4}+\frac{1}{4}}\|\nabla \{\chi _{2}v\}\|_{H^{1/2}}
\end{equation*}%
we thus obtain 
\begin{equation*}
\varepsilon ^{-\frac{3}{2}+\frac{1}{4}}\|\nabla \{\chi
_{2}u\}\|_{H^{1/2}}+\varepsilon ^{\frac{1}{2}}\varepsilon ^{-\frac{3}{4}+%
\frac{1}{4}}\|\nabla \{\chi _{2}v\}\|_{H^{1/2}}\leq \varepsilon
^{-1}\{\varepsilon ^{-3/4}L \|f\|_{L^2}+\varepsilon ^{-1/4}L \|g\|_{L^2}\}.
\end{equation*}

Finally, combining the estimates on $\chi_1[u,v]$ and $\chi_2[u,v]$ yields at once the claimed bound \eqref{claim} for $\alpha = 1/2$ and $m_\alpha = 7/4$. The lemma is thus proved. 
\end{proof}

\section{Proof of the main theorem}

\label{sec-proof} We are now ready to give the proof of our main theorem.
Consider the nonlinear scaled Navier-Stokes equations \eqref{scaledns} and
write the solutions $[U^\veps, V^\veps, P^\veps]$ in the asymptotic
expansion \eqref{expansion}: 
\begin{equation}  \label{expansion1}
\begin{aligned} ~[U^\veps, V^\veps, P^\veps](x,y) &= [u_\mathrm{app},
v_\mathrm{app}, p_\mathrm{app}] (x,y) + \varepsilon^{\gamma+\frac 12}
[u^\veps, v^\veps, p^\veps](x,y) \end{aligned}
\end{equation}%
with the approximate solutions constructed as in Proposition \ref%
{prop-approximate}. We shall now study the equations for the remainder
solutions $[u^\veps, v^\veps, p^\veps]$. Let us recall the leading
approximate solutions (without having the Prandtl layers $[u_p^1,v_p^1]$): 
\begin{equation}  \label{us-re}
u_{s} (x,y) = u_{e}^{0}+u_{p}^{0}+\sqrt{\varepsilon }u_{e}^{1}, \qquad
v_{s}(x,y) = v_{p}^{0}+v_{e}^{1} .
\end{equation}
Then, the remainder solutions $[u^\veps ,v^\veps ,p^\veps]$ solve 
\begin{subequations}
\begin{align}
u_{s}u^\veps_{x}+u^\veps u_{sx}+v_{s}u^\veps_{y}+v^\veps
u_{sy}+p^\veps_{x}-\Delta _{\varepsilon }u^\veps &=R_{1}(u^\veps,v^\veps)
\label{remainderu} \\
u_{s}v^\veps_{x}+u^\veps v_{sx}+v_{s}v^\veps_{y}+v^\veps v_{sy}+\frac{%
p^\veps_{y}}{\varepsilon }-\Delta _{\varepsilon }v^\veps
&=R_{2}(u^\veps,v^\veps)  \label{remainderv} \\
u^\veps_x + v^\veps_y & =0  \label{divuv}
\end{align}
in which $\Delta_\eps = \partial_y^2 + \varepsilon \partial_x^2$ and the
remainders $R_1(u^\veps,v^\veps), R_2(u^\veps,v^\veps)$ are defined by 
\end{subequations}
\begin{equation}  \label{remainders}
\begin{aligned} R_{1}(u^\veps,v^\veps) &: = \varepsilon^{-\gamma - \frac
12}R^u_\mathrm{app} - \sqrt \veps \Big[ (u_p^1 + \varepsilon^{\gamma}
u^\veps) u^\veps_x + u^\veps u^1_{px} + (v_p^1 + \varepsilon^{\gamma}
v^\veps) u^\veps_{y} + v^\veps u_{py}^1\Big] \\ R_{2}(u^\veps,v^\veps) &: =
\varepsilon^{-\gamma - \frac 12}R^v_\mathrm{app} - \sqrt \veps \Big[ (u_p^1
+ \varepsilon^\gamma u^\veps) v^\veps_{x}+u^\veps v^1_{px}+(v_p^1 +
\varepsilon^\gamma v^\veps)v^\veps_{y}+v^\veps v^1_{py} \Big]. \end{aligned}
\end{equation}
Here, the errors $R^u_\mathrm{app}, R^v_\mathrm{app}$ from the approximation
are estimated in Proposition \ref{prop-approximate}.

We shall apply the standard contraction mapping theorem for the existence of
the solutions to the nonlinear problem. Indeed, we introduce the function
space $X$, endowed with the norm: 
\begin{equation*}
\|[u^\veps,v^\veps]\|_{X}\equiv \|\nabla _{\varepsilon
}u^\veps\|_{L^2}+\|\nabla _{\varepsilon }v^\veps\|_{L^2}+\varepsilon ^{\frac{%
\gamma }{2}}\|u^\veps\|_{\infty }+\varepsilon ^{\frac{1}{2}+\frac{\gamma }{2}%
}\|v^\veps\|_{\infty }.
\end{equation*}
Now, for each $[u^\veps,v^\veps]\in X$, we solve the following linear
problem for $[\bar u^\veps, \bar v^\veps]$: 
\begin{equation}  \label{eqs-iteration}
\begin{aligned} u_{s}\bar{u}^\veps_{x}+\bar{u}^\veps
u_{sx}+v_{s}\bar{u}^\veps_{y}+\bar{v}^\veps u_{sy}+\bar{p}^\veps_{x}-\Delta
_{\varepsilon }\bar{u}^\veps &=R_{1}(u^\veps,v^\veps), \\
u_{s}\bar{v}^\veps_{x}+\bar{u}^\veps
v_{sx}+v_{s}\bar{v}_{y}^\veps+\bar{v}^\veps
v_{sy}+\frac{\bar{p}^\veps_{y}}{\varepsilon }-\Delta _{\varepsilon
}\bar{v}^\veps &=R_{2}(u^\veps,v^\veps). \end{aligned}
\end{equation}%
We are in the position to use the linear stability estimates obtained in
Proposition \ref{prop-stability}, yielding 
\begin{eqnarray}  \label{est-baruv}
\|\nabla _{\varepsilon }\bar{u}^\veps\|_{L^2}+\|\nabla _{\varepsilon }\bar{v}%
^\veps\|_{L^2} &\leq &\|R_{1}(u^\veps,v^\veps)\|_{L^{2}}+\sqrt{\varepsilon }%
\|R_{2}(u^\veps,v^\veps)\|_{L^2}.
\end{eqnarray}
We give estimates on the remainders $R_1, R_2$, defined as in %
\eqref{remainders}. Proposition \ref{prop-approximate} yields 
\begin{equation*}
\varepsilon^{-\gamma -\frac 12} \Big[\| R^u_\mathrm{app} \|_{L^2} + \sqrt
\varepsilon \| R^v_\mathrm{app}\|_{L^2} \Big] \le C(L, u_s, v_s)
\varepsilon^{-\gamma - \kappa+ \frac 14},
\end{equation*}
for arbitrary $\kappa>0$. In what follows, we take any $\gamma<1$ and take $%
\kappa$ so that $\gamma + \kappa < 1/4$. In addition, by a view of the
definition of $\| \cdot \|_X$, the divergence-free condition $u_x = -v_y$,
and the fact that $[u_p^1, v_p^1]$ are nearly bounded, we estimate 
\begin{equation*}
\begin{aligned} \sqrt \veps \| (u_p^1 + \varepsilon^{\gamma} u^\veps)
u^\veps_x\|_{L^2} &\le \sqrt \veps (\| u_p^1\|_\infty + \varepsilon^\gamma
\|u^\veps\|_\infty )\| u^\veps_x\|_{L^2} \\& \le \varepsilon^{1/2-\kappa}
\Big( C(u_p^1) + \varepsilon^{\frac \gamma 2} \|[u^\veps,v^\veps]\|_{X} \Big
) \|[u^\veps,v^\veps]\|_{X} \\ \sqrt \veps \| (v_p^1 + \varepsilon^{\gamma}
v^\veps) u^\veps_y\|_{L^2} &\le \sqrt \veps (\| v_p^1\|_\infty +
\varepsilon^\gamma \|v^\veps\|_\infty )\| u^\veps_y\|_{L^2} \\& \le\Big(
C(v_p^1) \varepsilon^{1/2-\kappa} + \varepsilon^{\frac \gamma 2}
\|[u^\veps,v^\veps]\|_{X} \Big ) \|[u^\veps,v^\veps]\|_{X} . \end{aligned}
\end{equation*}
Also, using the inequality $|[u^\veps, v^\veps] |\le \sqrt y \| [u^\veps_y,
v^\veps_y] \|_{L^2(\mathbb{R}_+)}$ and the uniform $H^1$ bounds obtained in %
\eqref{key-Prandtl1} on $u_p^1$, we get 
\begin{equation*}
\begin{aligned} \sqrt \veps \| u^\veps u^1_{px} + v^\veps u_{py}^1\|_{L^2}
&\le \sqrt \veps \sup_x\Big( \| \langle y \rangle^n u^1_{px}\|_{L^2(\RR_+)}
+ \| \langle y \rangle ^n \| u^1_{py}\|_{L^2(\RR_+)} \Big) \| [u^\veps_y,
v^\veps_y]\|_{L^2} \\& \le \varepsilon^{1/2-\kappa} C(u_p^1)
\|[u^\veps,v^\veps]\|_{X}. \end{aligned}
\end{equation*}

Similarly for terms in $R_2$, we have 
\begin{equation*}
\begin{aligned} \sqrt \veps \| (u_p^1 + \varepsilon^{\gamma} u^\veps)
v^\veps_x\|_{L^2} &\le \sqrt \veps (\| u_p^1\|_\infty + \varepsilon^\gamma
\|u^\veps\|_\infty )\| v^\veps_x\|_{L^2} \\& \le \varepsilon^{-\kappa} \Big(
C(u_p^1) + \varepsilon^{\frac \gamma 2} \|[u^\veps,v^\veps]\|_{X} \Big )
\|[u^\veps,v^\veps]\|_{X} \\ \sqrt \veps \| (v_p^1 + \varepsilon^{\gamma}
v^\veps) v^\veps_y\|_{L^2} &\le \sqrt \veps (\| v_p^1\|_\infty +
\varepsilon^\gamma \|v^\veps\|_\infty )\| v^\veps_y\|_{L^2} \\ &\le
\varepsilon^{1/2-\kappa} \Big( C(v_p^1) + \varepsilon^{\frac \gamma 2}
\|[u^\veps,v^\veps]\|_{X} \Big ) \|[u^\veps,v^\veps]\|_{X} \end{aligned}
\end{equation*}
and 
\begin{equation*}
\begin{aligned} \sqrt \veps \| u^\veps v^1_{px} + v^\veps v_{py}^1\|_{L^2}
&\le \sqrt \veps \Big(\| u^\veps\|_\infty \| v^1_{px}\|_{L^2} + \sup_x\|
\langle y \rangle ^n \| v^1_{py}\|_{L^2(\RR_+)} \| v^\veps_y\|_{L^2}\Big)
\\& \le C(v_p^1) \varepsilon^{1/4-\frac \gamma
2-\kappa}\|[u^\veps,v^\veps]\|_{X}, \end{aligned}
\end{equation*}
in which we have used the bound \eqref{est-vpx}: $\| v_{px}^1 \|_{L^2}
\lesssim \varepsilon^{-1/4-\kappa}$.

Combining the above estimates into \eqref{est-baruv} yields 
\begin{eqnarray}  \label{est-R12proof}
&&\|R_{1}(u^\veps,v^\veps)\|_{L^{2}}+\sqrt{\varepsilon }\|R_{2}(u^\veps,v^%
\veps)\|_{L^2}  \notag \\
&&\qquad \leq C(L, u_s, v_s) \varepsilon^{-\gamma -\kappa + \frac 14} +
\varepsilon ^{- \frac{\gamma }{2}+ \frac
12}\{\|[u^\veps,v^\veps]\|_{X}+\|[u^\veps,v^\veps]\|_{X}^{2}\} ,
\end{eqnarray}
upon noting that $\gamma + \kappa \le \frac 14$ and $\varepsilon \ll 1$, and
hence the bound on the gradient of $\bar u^\veps, \bar v^\veps$ by %
\eqref{est-baruv}. It remains to bound the sup norm. We now use the
estimates on the Stokes problem from Lemma \ref{elliptic} with 
\begin{equation*}
\begin{aligned}f&=-u_{s}u^\veps_{x}-u_{sx}u^\veps-v_{s}u^\veps_{y}-v^\veps
u_{sy}+R_{1} \\ g&=-u_{s}v^\veps_{x}-u^\veps v_{sx}-v_{s}v^\veps_{y}-v^\veps
v_{sy}+R_{2}.\end{aligned}
\end{equation*}
The Stokes estimates, together with \eqref{est-baruv}, yield 
\begin{eqnarray}
\varepsilon ^{\frac{\gamma }{2}}\|\bar{u}^\veps\|_{\infty }+\varepsilon ^{%
\frac{1}{2}+\frac{\gamma }{2}}\|\bar{v}^\veps\|_{\infty } &\leq &C_{\gamma
}\varepsilon^{\frac \gamma 4}\{\|\nabla _{\varepsilon }\bar{u}%
^\veps\|_{L^2}+\|\nabla _{\varepsilon }\bar{v}^\veps\|_{L^2}\}  \notag \\
&&+\varepsilon^{\frac \gamma
4}\|-u_{s}u^\veps_{x}-u_{sx}u^\veps-v_{s}u^\veps_{y}-v^\veps
u_{sy}+R_{1}\|_{L^2}  \label{supestimate} \\
&&+\varepsilon^{\frac \gamma 4+ \frac 12}\|-u_{s}v^\veps_{x}-u^\veps
v_{sx}-v_{s}v^\veps_{y}-v^\veps v_{sy}+R_{2}\|_{L^2} .  \notag
\end{eqnarray}%
The estimates \eqref{est-baruv} and \eqref{est-R12proof} have yielded the
desired estimates on $\nabla_\veps [\bar u^\veps, \bar v^\veps]$ and on $%
R_1, R_2$. It remains to give estimates on terms on the right-hand side of
the above that involve $[u_s, v_s]$. Indeed, since $[u_s, v_s]$ are nearly
bounded, we have 
\begin{equation*}
\begin{aligned} \varepsilon^{\frac \gamma 4} \|u_{s}u^\veps_{x}+
v_{s}u^\veps_{y} \|_{L^2} &\le \varepsilon^{\frac \gamma 4-\kappa}C(L, u_s,
v_s) ( \| u^\veps_x\|_{L^2} + \| u^\veps_y \|_{L^2} ) \le \varepsilon^{\frac
\gamma 4}C(u_s, v_s) \| [u^\veps, v^\veps\|_{X} \\ \varepsilon^{\frac \gamma
4} \|u_{sx}u^\veps +v^\veps u_{sy}\|_{L^2} &\le\varepsilon^{\frac \gamma 4}
\Big( \sup_x \|\sqrt y u_{sx}\|_{L^2(\RR_+)} + \sup_x \|\sqrt y
u_{sy}\|_{L^2(\RR_+)} \Big) \|[u^\veps_y,v^\veps_y]\|_{L^2} \\& \le
\varepsilon^{\frac \gamma 4-\kappa}C(L, u_s, v_s) \| [u^\veps,
v^\veps\|_{X}, \end{aligned}
\end{equation*}
in which the last inequality used the estimates \eqref{uintegral} and the
following estimate: 
\begin{eqnarray*}
\sup_{x}\int y|u_{sx}|^{2}dy &\le& \sup_{x}\int y\Big[|u^0_{px}|^2 +
\varepsilon |u^1_{ex}|^2 \Big] \; dy  \notag \\
&\leq &C(u_{p}^{0})+ \sup_{x} \int z |v_{ez}^{1}|^2 \; dz \\
&\le &C(u_p^0)+ L^{-1}\Big( \iint z|v_{ez}^{1}|^2 dxdz+\iint
z|v_{exz}^{1}|^2dxdz \Big)  \notag \\
&\le & C(L, u_s,v_s).
\end{eqnarray*}
Similarly, we estimate 
\begin{equation*}
\begin{aligned} \varepsilon^{\frac \gamma 4+\frac 12} \|u_{s}v^\veps_{x}+
v_{s}v^\veps_{y} \|_{L^2} &\le \varepsilon^{\frac \gamma 4+\frac
12-\kappa}C(L, u_s, v_s) ( \| v^\veps_x\|_{L^2} + \| v^\veps_y \|_{L^2}) \le
\varepsilon^{\frac \gamma 4}C(u_s, v_s) \| [u^\veps, v^\veps\|_{X} \\
\varepsilon^{\frac \gamma 4+\frac 12} \|v_{sx}u^\veps +v^\veps
v_{sy}\|_{L^2} &\le\varepsilon^{\frac \gamma 4+\frac 12} \sup_x \Big( \|
\sqrt y v_{sx}\|_{L^2} + \|\sqrt y u_{sx}\|_{L^2(\RR_+)} \Big)\|[u^\veps_y
,v^\veps_y]\|_{L^2} \\& \le \varepsilon^{\frac \gamma 4 -\kappa }C(L, u_s,
v_s) \| [u^\veps, v^\veps\|_{X}, \end{aligned}
\end{equation*}
in which we have used 
\begin{eqnarray*}
\sup_{x}\int y|v_{sx}|^{2}dy &\le& \sup_{x}\int y\Big[|v^0_{px}|^2 +
|v^1_{ex}|^2 \Big] \; dy  \notag \\
&\leq &C(v_{p}^{0})+ \varepsilon^{-1}\sup_{x} \int z |v_{ex}^{1}|^2 \; dz \\
&\le &C(u_p^0)+\varepsilon^{-1} L^{-1}\Big( \iint z|v_{ex}^{1}|^2 dxdz+\iint
z|v_{exx}^{1}|^2dxdz \Big)  \notag \\
&\le &\varepsilon^{-1}C(L, u_s,v_s).
\end{eqnarray*}
Hence, putting the above estimates together into the sup estimate %
\eqref{supestimate}, we have proved 
\begin{equation}  \label{est-supproof}
\begin{aligned} \varepsilon ^{\frac{\gamma }{2}}\|\bar{u}^\veps\|_{\infty
}+\varepsilon ^{\frac{1}{2}+\frac{\gamma }{2}}\|\bar{v}^\veps\|_{\infty } &
\le C(u_s, v_s) \varepsilon^{-\frac{3\gamma}{4} + \frac 14 -\kappa} +
\varepsilon ^{- \frac{\gamma }{4}+ \frac 12
-\kappa}\{\|[u^\veps,v^\veps]\|_{X}+\|[u^\veps,v^\veps]\|_{X}^{2}\}
\\&\qquad + \varepsilon^{\frac \gamma 4-\kappa }C(L, u_s, v_s) \| [u^\veps,
v^\veps\|_{X}. \end{aligned}
\end{equation}

Thus, by definition and the assumption that $\gamma +\kappa \le \frac 14$,
the estimates \eqref{est-R12proof} and \eqref{est-supproof} yield 
\begin{equation*}
\|[\bar{u}^\veps,\bar{v}^\veps]\|_{X}\leq C(u_s,v_s) \Big( 1+
\varepsilon^{\frac \gamma 4 -\kappa}C(L) \|[u^\veps,v^\veps]\|_{X}+
\varepsilon^{\frac 38}\|[u^\veps,v^\veps]\|_{X}^{2} \Big).
\end{equation*}%
This proves that the operator $[u^\veps,v^\veps] \mapsto [\bar u^\veps, \bar
v^\veps]$ via the problem \eqref{eqs-iteration} maps the ball $\{
\|[u^\veps,v^\veps]\|_{X}\leq 2C(u_s,v_s)\}$ in $X$ into itself, for
sufficiently small $\varepsilon$. Moreover, it follows in the similar lines
of estimates that 
\begin{equation*}
\|[\bar{u}^\veps_{1}-\bar{u}^\veps_{2},\bar{v}^\veps_{1}-\bar{v}%
^\veps_{2}]\|_{X}\leq C(L, u_s, v_s)\varepsilon ^{\frac{\gamma }{4} -
\kappa}
\{\|[u^\veps_{1},v^\veps_{1}]\|_{X}+\|[u^\veps_{2},v^\veps_{2}]\|_{X}\}\|[u^%
\veps_{1}-u^\veps_{2},v^\veps_{1}-v^\veps_{2}]\|_{X},
\end{equation*}%
for every two pairs $[u^\veps_1,v^\veps_1]$ and $[u^\veps_2,v^\veps_2]$ in $%
X $. This proves the existence of the solution to \eqref{eqs-iteration} with 
$[\bar u^\veps, \bar v^\veps] = [u^\veps,v^\veps]$ via the standard
contraction mapping theorem, for sufficiently small $\varepsilon$. The main
theorem is proved.

\bigskip

\emph{Acknowledgement}: The authors wish to thank H. Dong, J. Guzman, and I.
Tice for their discussions and pointing out references \cite{BR, O, OS} on
regularity of Stokes problems in a domain with corners. Y. Guo's research is
supported in part by NSFC grant 10828103 and NSF grant DMS-0905255, and T. Nguyen's research was supported in part
by the NSF under grant DMS-1405728.

\end{document}